\newcommand{\Comment}[1]{{\color{blue}#1}}
\newcommand{\OptionalDetails}[1]{
    \ifbool{ForSubmission}
        {
        }
        {\begin{quote}\Comment{\footnotesize
        \medskip

        \noindent#1}
        \end{quote}
        }
    }
\newcommand{\IfarXivElse}[2]{
    \ifbool{arXivFormat}
        {#1}{#2}
    }
\renewcommand{\mathbf}[1]{\bm{#1} \textbf{ *** Use bm instead of mathbf ***}}
\newcommand{\eqn}{\begin{eqnarray}}
\newcommand{\een}{\end{eqnarray}}
\newtheorem{theorem}{Theorem}[section]
\newtheorem{prop}[theorem]{Proposition}
\newtheorem{lemma}[theorem]{Lemma}
\newtheorem{cor}[theorem]{Corollary}
\newtheorem{definition}[theorem]{Definition}
\newtheorem{remark}[theorem]{Remark}
\numberwithin{equation}{section}
\newcommand{\abs}[1]{\left\vert#1\right\vert}
\newcommand{\BB}[1]{\ensuremath{\mathbb{#1}}}
\newcommand{\R}{\ensuremath{\BB{R}}}
\newcommand{\N}{\ensuremath{\BB{N}}}
\newcommand{\iny}{\ensuremath{\infty}}
\newcommand{\grad}{\ensuremath{\nabla}}
\newcommand{\CharFunc}{
    \ifbool{HaveBBM}{
        \ensuremath{\mathbbm{1}}
        }
        {
        \ensuremath{\bm{1}}
        }
    }
\DeclareMathOperator{\dv}{div} %
\DeclareMathOperator{\curl}{curl} %
\DeclareMathOperator{\tr}{tr} %
\DeclareMathOperator{\supp}{supp} %
\DeclareMathOperator{\PV}{p.v.} %
\newcommand{\starp}{{\, \widetilde{*} \,}}
\newcommand{\starpdot}{{\, \widetilde{*} \cdot \,}}
\newcommand{\stardot}{\mathop{* \cdot}}
\newcommand{\prt}{\ensuremath{\partial}}
\newcommand{\brac}[1]{\ensuremath{\left[ #1 \right]}}
\newcommand{\bigbrac}[1]{\ensuremath{\Big[ #1 \Big]}}
\newcommand{\pr}[1]{\ensuremath{\left( #1 \right)}}
\newcommand{\bigpr}[1]{\ensuremath{\Big( #1 \Big)}}
\DeclarePairedDelimiter{\set}{\{}{\}}
\newcommand{\bigset}[1]{\ensuremath{\left\{ #1 \right\}}}
\newcommand{\norm}[1]{\ensuremath{\left\Vert #1 \right\Vert}}
\newcommand{\smallnorm}[1]{\ensuremath{\Vert #1 \Vert}}
\newcommand{\n}{\bm{n}}
\newcommand{\wh}{\widehat}
\newcommand{\FTF}
    {\Cal{F}}
\newcommand{\FTR}
    {\Cal{F}^{-1}}
\renewcommand{\epsilon}{\varepsilon}
\newcommand{\eps}{\ensuremath{\varepsilon}}
\newcommand{\Cal}[1]{\ensuremath{\mathcal{#1}}}
\newcommand{\al}{\ensuremath{\alpha}}
\newcommand{\la}{\ensuremath{\lambda}}
\newcommand{\diff}[2]{\frac{ d#1}{d#2}}
\newcommand{\ol}{\overline}
\newcommand{\smallabs}[1]{\ensuremath{\vert #1 \vert}}
\renewcommand{\matrix}[2]{\begin{pmatrix} #1 \\ #2 \end{pmatrix}}
\newcommand{\matrixone}[1]{\begin{pmatrix} #1 \end{pmatrix}}
\newcommand{\intg}{\int_0^r \rho g(\rho) \, d \rho}
\newcommand{\Holder}
    {H\"{o}lder\xspace}
\newcommand{\Ignore}[1]{}
\definecolor{Correction}{named}{red}
\newcommand{\spacer}{\vspace{2mm}}
\newcommand{\halfspacer}{\vspace{1mm}}
\newcommand{\e}{\bm{e}}
\newcommand{\Ndelta}[2]{\Cal{N}_{#2}(#1)}
\crefname{cor}{Corollary}{Corollaries} 
\crefname{lemma}{Lemma}{Lemmas}	       
\crefname{section}{Section}{Sections}
\Crefname{section}{Section}{Sections}
\crefname{appendix}{Appendix}{Appendices}
\Crefname{appendix}{Appendix}{Appendices}
\crefname{theorem}{Theorem}{Theorems}
\Crefname{theorem}{Theorem}{Theorems}
\crefname{prop}{Proposition}{Propositions}
\Crefname{prop}{Proposition}{Propositions}
\crefname{conj}{Conjecture}{Conjectures}
\Crefname{conj}{Conjecture}{Conjectures}
\crefname{definition}{Definition}{Definitions}
\Crefname{definition}{Definition}{Definitions}
\crefname{remark}{Remark}{Remarks}
\Crefname{remark}{Remark}{Remarks}
\crefname{assumption}{Assumption}{Assumptions}
\Crefname{assumption}{Assumption}{Assumptions}
\begin{document}

\newdateformat{mydate}{\THEDAY~\monthname~\THEYEAR}

\title
    [The vortex patches of Serfati]
    {The vortex patches of Serfati}

\author{Hantaek Bae}
\address{Department of Mathematical Sciences, Ulsan National Institute of Science and Technology (UNIST), Korea}
\email{hantaek@unist.ac.kr}

\author{James P Kelliher}
\address{Department of Mathematics, University of California, Riverside, USA}
\email{kelliher@math.ucr.edu}

\date{(compiled on {\dayofweekname{\day}{\month}{\year} \mydate\today)}}

\begin{abstract}
In 1993, two proofs of the persistence of regularity of the boundary of a classical vortex patch for the 2D Euler equations were published, one by Chemin in \cite{Chemin1993Persistance} (announced in 1991 in \cite{Chemin1991VortexPatch}) the other by Bertozzi and Constantin in \cite{ConstantinBertozzi1993}. Chemin, in fact, proved a more general result, extending it further in his 1995 text \cite{C1998} showing, roughly, that vorticity initially having discontinuities only in directions normal to a family of vector fields that together foliate the plane continue to be so characterized by the time-evolved vector fields. A different, four-page ``elementary'' proof of Chemin's 1993 result was published in 1994 by Ph. Serfati \cite{SerfatiVortexPatch1994}, who also gave a fuller characterization of the velocity gradient's regularity.
We give a detailed version of Serfati's proof along with an extension of it to a family of vector fields that reproduces the 1995 result of Chemin.
\end{abstract}

\maketitle

\ifbool{ForSubmission}{
    }
    {
    \begin{center}
    {
        \footnotesize
        \Comment{
        \framebox[0.69\textwidth]{
            \parbox[center][3.5em][c]{0.66\textwidth}
            {
            Includes some proofs and additional details not intended
            for submission.
            These details appear in blue in small font.           
        }
        }
        }
    }
    \end{center}

    \renewcommand\contentsname{}  
    \setcounter{tocdepth}{1}      
    {\small
        \tableofcontents
    }
    }

\noindent 
In the late 1980s into the early 1990s there was a great deal of interest in determining whether a vortex patch having a smooth boundary at time zero continues to have a smooth boundary for all time as it evolves under the 2D Euler equations.
Majda had suggested in \cite{M1986} the possibility of singularities forming in finite time.
Existing numerical evidence showed that the boundary typically deforms dramatically over time, and hinted at the development of such singularities. The announcement in 1991 \cite{Chemin1991VortexPatch} and the two 1993 papers \cite{Chemin1993Persistance, ConstantinBertozzi1993} came, then, as a surprise to many, showing as they did that the boundary remains regular for all time.



In 1994, another proof of the persistence of regularity of a vortex patch was published by Ph. Serfati in the four-page paper, \cite{SerfatiVortexPatch1994}. Like Chemin's \cite{Chemin1991VortexPatch}, it was published in a journal devoted primarily to short announcements, but unlike \cite{Chemin1991VortexPatch}, it was never followed by a full-length publication.
In this highly condensed form much is omitted that would aid the reader in understanding,
and much is left to the reader to decipher.
It's opaqueness has kept \cite{SerfatiVortexPatch1994} from having an influence on subsequent developments in two-dimensional fluid mechanics. One of our purposes here is to present our interpretation of Serfati's argument in a detailed enough form to make it accessible, for it is not only an elegant approach to the vortex patch problem, but some of its ideas, buried for two decades, have potential applications to problems of current interest.

Chemin proved a more general result in \cite{C1998} of which the persistence of regularity of the boundary of a vortex patch was a special case. He employed a family of vector fields and showed, speaking roughly, that if the initial vorticity is $C^\al$ in the direction of this family for some $\al > 0$ then this property holds true for all time. A second purpose of this work is to show that if one extends Serfati's hypotheses on the initial data by employing a family of vector fields then one obtains the same result as \cite{C1998}. We also reinterpret this result as showing that if the initial velocity is $C^{1 + \al}$ in the direction of the family of vector fields then this property holds true for all time.

Finally, Serfati also showed that the gradient of the velocity is $C^\al$ after being corrected by a $C^\al$ multiple of the vorticity. We give a different proof of this result (which was one sentence in \cite{SerfatiVortexPatch1994}) and show that it yields an improved estimate on the local propagation of \Holder regularity of the velocity.

\section{Introduction and statements of results}\label{S:Introduction}

\noindent

\noindent The Euler equations (without forcing) in velocity form can be written,
\begin{align}\label{e:Eu}
    \left\{
    \begin{array}{rl}
        \prt_t u + (u \cdot \nabla) u + \nabla p &= 0, \\
        \dv u &= 0,
    \end{array}
    \right.
\end{align}
where $u$ is the velocity field and $p$ is the pressure. The operator $u \cdot \grad = u^i \prt_i$, where we follow the usual convention that repeated indices are summed over. These equations model the flow of an incompressible inviscid fluid.

By introducing the 2D vorticity,
\begin{align*}
    \omega=\prt_1 u^2-\prt_2 u^1 ,
\end{align*}
we obtain the vorticity formulation,
\begin{align}\label{e:Eomega}
    \left\{
    \begin{array}{rl}
        \prt_t \omega + u \cdot \nabla \omega &= 0, \\
        u &= K * \omega.
    \end{array}
    \right.
\end{align}
Here,
\begin{align}\label{e:BS}
    K(x)
        = \frac{1}{2 \pi} \frac{x^\perp}{\abs{x}^2}, \quad
        x^\perp := (-x_2, x_1),
\end{align}
is the Biot-Savart kernel, which can also be written 
\begin{align*}
    K
        = \grad^\perp \mathcal{F},
        \quad
        \Cal{F}(x)
            = \frac{1}{2\pi}\log \abs{x},
            \quad
            \nabla^\perp := (-\prt_2, \prt_1),
\end{align*}
$\Cal{F}$ being the fundamental solution to the Laplacian.

Let $\eta(t, x)$ be the flow map associated to the velocity field $u$, so that
\begin{align}\label{e:etaDef}
    \prt_t \eta(t, x)
        = u \pr{t, \eta(t, x)},
            \quad
            \eta(0, x) = x.
\end{align}
Then \cref{e:Eomega} tells us that the vorticity is transported by the flow map, so that
\begin{align}\label{e:omegaFlow}
    \omega(t, x) = \omega_0(\eta^{-1}(t, x)) 
\end{align}
is the vorticity of the solution to the Euler equations at time $t$, where $\omega_0$ is the initial vorticity.

All this presupposes that sufficiently regular solutions exist and are unique. In fact, it all can be made sense of for initial vorticity in $L^1 \cap L^\iny$, in which case the vorticity remains in $L^1 \cap L^\iny$, as first shown by Yudovich in \cite{Y1963}. One must, however, use a weak formulation of \cref{e:Eu} or \cref{e:Eomega}, though $\cref{e:Eomega}_2$ and \cref{e:omegaFlow} continue to hold.

If the vorticity is initially the characteristic function of a bounded domain, it will remain so for all time as the Euler solution evolves, since $\eta(t, \cdot)$ is a diffeomorphism. A (classical) vortex patch is such a bounded domain. So if
\begin{align}\label{e:InitialVorticity}
    \omega_0
        = \CharFunc_\Omega,
\end{align}
where $\Omega$ is a bounded domain, then by \cref{e:omegaFlow},
\begin{align*}
    \omega(t) = \CharFunc_{\Omega_t},
        \quad
        \Omega_t := \eta(t, \Omega).
\end{align*}
The bounded domain, $\Omega_t$, is the vortex patch at time $t$.

The regularity of the boundary of $\Omega$ will be specified using a parameter, $\al$.

\smallskip
\begin{center}
    \fbox{Throughout this paper we fix $\al \in (0, 1)$.}
\end{center}
\smallskip

We can now state the result of \cite{Chemin1993Persistance, ConstantinBertozzi1993} more precisely. 

\begin{theorem}\cite{Chemin1991VortexPatch, Chemin1993Persistance, ConstantinBertozzi1993}\label{T:VortexPatch}
    Let $\Omega$ be a bounded domain whose boundary is the image of a
    simple closed curve $\gamma_0 \in C^{1 + \alpha}(\mathbb{S}^1)$ and
    let $\omega_0$ be as in \cref{e:InitialVorticity}. There exists a unique solution
    $u$ to the 2D Euler equations, with 
    \begin{align*}
        \grad u(t) \in L^\iny(\R^2), \quad
        \gamma(t, \cdot)
            := \eta(t, \gamma_0(\cdot)) \in C^{1 + \alpha}(\mathbb{S}^1)
        \text{ for all } t \in \R.
    \end{align*}
\end{theorem}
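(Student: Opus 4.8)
The plan is to upgrade the Yudovich solution by a coupled bootstrap that controls three quantities at once: the $L^\iny$ norm of $\grad u$, the bi-Lipschitz and $C^\al$ regularity of the flow map $\eta$, and a tangential vector field carried along the patch boundary. Uniqueness needs no new work: since $\omega_0 = \CharFunc_\Omega \in L^1 \cap L^\iny$, Yudovich's theorem already supplies a unique weak solution with log-Lipschitz velocity for which \cref{e:omegaFlow} holds, so all that follows serves only to establish the claimed regularity of this solution.

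First I would equip the patch with a tangential structure. Since $\gamma_0 \in C^{1+\al}$ parametrizes $\prt\Omega$, take a defining function $\varphi \in C^{1+\al}$ with $\Omega = \smallset{\varphi > 0}$ and $\abs{\grad\varphi} \ne 0$ near $\prt\Omega$, and set $X_0 = \grad^\perp\varphi$. Then $X_0 \in C^\al(\R^2)$ is automatically divergence-free, tangent to $\prt\Omega$, and bounded away from $0$ near the boundary, and tangency gives $\prt_{X_0}\omega_0 := X_0 \cdot \grad\omega_0 = 0$ as a distribution, because $\grad\CharFunc_\Omega$ is supported on $\prt\Omega$ and directed along the normal. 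I would then carry $X_0$ by the flow, setting $X(t, \eta(t,x)) = \grad_x\eta(t,x)\, X_0(x)$, so that
\begin{align*}
    \prt_t X + (u \cdot \grad) X = (X \cdot \grad) u.
\end{align*}
A short computation using $\dv u = 0$ shows $\dv X$ is itself transported, so $X(t)$ stays divergence-free; differentiating the transport identity \cref{e:omegaFlow} along $X_0$ shows the tangency persists, $\prt_X \omega(t) = 0$ for all $t$. Thus $X(t)$ is a divergence-free $C^\al$ field, tangent to $\prt\Omega_t$, along which $\omega(t)$ is constant.

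The analytic heart — and the step I expect to be the main obstacle — is the velocity-gradient estimate. Writing $\grad u = \grad K * \omega$, the kernel $\grad K$ is homogeneous of degree $-2$, so $\grad K *$ is a Calder\'on--Zygmund operator and does \emph{not} send $L^\iny$ to $L^\iny$; for generic bounded $\omega$ the gradient $\grad u$ has only logarithmic (BMO) regularity. The idea is to exploit $\prt_X\omega = 0$: decomposing $\grad u$ and integrating by parts against the singular kernel (using $\dv X = 0$ to transfer the derivative onto $X$) trades the unbounded direct term for a commutator controlled by $\norm{\omega}_{L^\iny}$ and the regularity of $X$, with a careful annular (frequency-localized) estimate producing the \emph{logarithmic} bound
\begin{align*}
    \norm{\grad u(t)}_{L^\iny}
        \le C \norm{\omega_0}_{L^\iny}
            \left( 1 + \log\left( e + \frac{\norm{X(t)}_{C^\al}}{\inf_x \abs{X(t,x)}} \right) \right).
\end{align*}
Securing exactly this logarithmic dependence, rather than a power, is what lets the argument close without finite-time blowup, and is the subtle part of Serfati's ``elementary'' computation.

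Finally I would close the bootstrap by Gronwall. The bound on $\norm{\grad u}_{L^\iny}$ controls the flow: $\eta(t, \cdot)$ is bi-Lipschitz with constants of order $\exp \int_0^t \norm{\grad u}_{L^\iny}$, and $\grad\eta$ together with its inverse stays in $C^\al$ with norms governed by the same exponential. Feeding this back, $\norm{X(t)}_{C^\al}$ stays finite and $\inf_x \abs{X(t,x)}$ stays bounded below (by invertibility of $\grad\eta$), so bundling these into one quantity $M(t)$ and combining with the previous display yields a differential inequality of the schematic form $M'(t) \le C\, M(t) \log(e + M(t))$. Its solution grows at most double-exponentially in $t$ yet remains finite for every $t \in \R$, in both time directions. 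Hence $\grad u(t) \in L^\iny(\R^2)$ for all $t$, and since the $C^{1+\al}$ curve $\gamma_0$ is carried by the flow $\eta(t, \cdot)$ whose tangent field $X(t)$ stays $C^\al$ and non-degenerate, the image $\gamma(t, \cdot) = \eta(t, \gamma_0(\cdot))$ remains in $C^{1+\al}(\mathbb{S}^1)$ for all $t \in \R$, which would give the claimed conclusion.
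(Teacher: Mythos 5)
Your outline is essentially correct, but it reconstructs the Bertozzi--Constantin/Chemin argument rather than the route this paper takes. The paper reduces \cref{T:VortexPatch} to \cref{T:Serfati}: it chooses $Y_0 = \grad^\perp \phi_0$ exactly as you do, verifies distributionally that $\dv(\omega_0 Y_0) = 0$ so that the hypotheses \cref{e:Assumption} hold with $\Sigma = \prt\Omega$, and then the level-set argument of \cref{S:ProofOfVortexPatch} gives persistence of the $C^{1+\al}$ boundary. The genuine divergence is in how $\norm{\grad u}_{L^\iny}$ is controlled. You propose the classical pointwise-in-time logarithmic estimate $\norm{\grad u}_{L^\iny} \le C\norm{\omega_0}_{L^\iny}\bigl(1 + \log(e + \norm{X}_{C^\al}/\inf\abs{X})\bigr)$ and close with a differential inequality of type $M' \le C M\log(e+M)$. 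Serfati, as presented in \cref{S:Refinedgradueps,S:ClosingWithGronwall}, never proves that estimate: he splits $\grad K \starp \omega_\eps$ at a cutoff radius chosen \emph{a priori} as $r \sim \exp(-C'\int_0^t V_\eps)$, controls the singular part via the linear algebra \cref{L:SerfatiLemma1} applied to a matrix whose first column is $Y_\eps$ (bounding $\abs{B}$ by $\abs{BM_1}$ and $\abs{\tr B}$), and introduces the auxiliary field $R_\eps$, which has the same divergence as $\omega_\eps Y_\eps$ but is uniformly $C^\al$, to handle the divergence-form terms. The factor $r^\al$ then absorbs the doubly exponentially growing $\norm{Y_\eps}_{C^\al}$, and the estimate closes by plain linear Gronwall, $V_\eps(t) \le C_\al(1+t) + c_\al\int_0^t V_\eps$. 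Both routes land on the same bounds \cref{e:MainBounds}; yours is the better-known one, while the paper's avoids both the geometric lemma and the explicit logarithmic optimization.

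Two caveats on your plan as written. First, the logarithmic estimate you display is precisely the hard content of \cite{ConstantinBertozzi1993}; ``integrating by parts to trade the singular term for a commutator'' is the right idea, but carrying it out requires either the geometric lemma of that paper or the machinery of \cref{L:SerfatiLemma1,L:SerfatiLemma2} here, so that step remains a black box in your proposal. Second, and not flagged at all: closing the Gronwall loop requires propagating $\norm{X(t)}_{C^\al}$, and by the transport equation $\prt_t X + u\cdot\grad X = X\cdot\grad u$ this forces you to bound $\norm{X\cdot\grad u}_{C^\al}$ --- not merely $\norm{\grad u}_{L^\iny}$ --- by something like $C\norm{\grad u}_{L^\iny}\norm{X}_{C^\al}$ plus controlled terms. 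That is a second singular-integral estimate of comparable difficulty (it is \cref{C:graduCor} combined with \cref{L:SerfatiLemma2} in the paper, and it uses $\dv(\omega X) = 0$ in an essential way); without it, the assertion that ``$\norm{X(t)}_{C^\al}$ stays finite'' does not follow from the $L^\iny$ bound on $\grad u$ alone.
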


In \cite{Chemin1993Persistance}, Chemin proves a more general result of which \cref{T:VortexPatch} is a corollary. We show in \cref{S:EquivInitConditions} that Serfati's result in \cite{Serfati1995A} is equivalent to that in \cite{Chemin1993Persistance}. To describe Serfati's result, we must first make some definitions. 
Let $\Sigma$ be a closed subset of $\R^2$ and $Y_0$ be a $C^\alpha$-vector field in $\R^2$. Let $\Ndelta{\Sigma}{\delta} := \set{x\in \R^2: d(x, \Sigma) < \delta}$. Serfati assumes the following initial conditions:
\begin{align}\label{e:SerfatiInitialData}
    \left\{
        \begin{array}{l}
            \halfspacer \displaystyle
            \omega_0
                = \omega_0^1 + \omega^2_0
                    \in (L^1 \cap L^\iny)(\R^2),
                    \quad \omega_0^1 = 0 \ \text{on } \Sigma^C,
                    \,
                    \omega^2_0 \in C^\alpha(\R^2),\\
           \halfspacer \displaystyle
           Y_0\in C^\alpha(\R^2), \quad \abs{Y_0} \ge c > 0 \ \text{on} \
                 \Ndelta{\Sigma}{\delta_0}, \, \delta_0 > 0, \\
            \displaystyle K * \dv(\omega_0 Y_0) \in C^\alpha(\R^2).
    \end{array}
    \right.
\end{align}
We streamline these conditions to
\begin{align}\label{e:Assumption}
    \left\{
        \begin{array}{l}
            \halfspacer \displaystyle
            \omega_0\in C^\alpha(\R^2 \setminus \Sigma)
                \cap (L^1 \cap L^\iny)(\R^2), \\
            \halfspacer \displaystyle
           Y_0\in C^\alpha(\R^2), \quad \abs{Y_0} \ge c > 0 \ \text{on} \
                 \Ndelta{\Sigma}{\delta_0}, \, \delta_0 > 0, \\
            \halfspacer \dv (\omega_0 Y_0) \in C^{\alpha-1}(\R^2), \\
            \dv Y_0 \in C^\al(\R^2).
    \end{array}
    \right.
\end{align}
(For the negative index \Holder space, $C^{\al - 1}(\R^2)$, see \cref{D:HolderSpaces}.)

We show in \cref{A:ProofsOfLemmas} that \cref{e:SerfatiInitialData}$_3$ is equivalent to \cref{e:Assumption}$_3$ (since $\omega_{0}Y_0 \in L^\iny$). Also, we added the condition in \cref{e:Assumption}$_4$, which is missing in \cite{SerfatiVortexPatch1994}, though present in \cite{Chemin1993Persistance, C1998}, as it is necessary in the proof of the convergence of the approximate solutions (see, however, \cref{R:WeakerAssumption}). Such convergence is not addressed by Serfati in \cite{SerfatiVortexPatch1994}. This is the only place this condition is required. (See also \cref{R:SerfatidivY}.)

We define the pushforward of $Y_0$ by
\begin{align}\label{e:Y}
    Y(t, \eta(t, x)) := (Y_0(x) \cdot \nabla) \eta(t, x).
\end{align}
This is just the Jacobian of the diffeomorphism, $\eta(t, \cdot)$, multiplied by $Y_0$. Equivalently,
\begin{align*}
    Y(t, x)
        = \eta(t)_* Y_0(t, x)
        := (Y_0(\eta^{-1}(t, x))
            \cdot \grad) \eta(t, \eta^{-1}(t, x)).
\end{align*}

We will make frequent use of constants of the form,
\begin{align}\label{e:Calpha}
    c_\al
        = C(\omega_0, Y_0) \al^{-1}, \quad
    C_\al
        = C(\omega_0, Y_0) \al^{-1} (1 - \al)^{-1},
\end{align}
where $C(\omega_0, Y_0)$ is a constant that depends upon only $\omega_0$ and $Y_0$. The values of these constants can vary from expression to expression and even between two occurrences within the same expression.


\begin{theorem}[Serfati \cite{SerfatiVortexPatch1994}]\label{T:Serfati}
Assume that $\omega_0$ is an initial vorticity for which there exists some vector field $Y_0$ so that \cref{e:Assumption} is satisfied and let $\omega$ in $L^\iny(\R; (L^1 \cap L^\iny)(\R^2))$ be the unique solution to the Euler equations. We have,
%
\begin{align}\label{e:MainBounds}
    \begin{split}
        \norm{\grad u(t)}_{L^\iny}
            \le c_\al e^{c_\al t}, \quad
        \norm{Y(t)}_{C^\alpha}
            \le C_\al e^{e^{c_\al t}}. \\
  \end{split}
\end{align}
Moreover, there exists a matrix $A(t)\in C^\alpha(\R^2) $ such that 
\begin{align}\label{e:AEq}
    \nabla u(t) - \omega(t) A(t)
        \in C^\alpha(\R^2) \quad \text{for all time.}
\end{align}
(An explicit form for the matrix $A$ is given in \cref{e:AExplicit}.)
\end{theorem}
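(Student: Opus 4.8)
The plan is to split the argument into a time-independent estimate that converts the tangential regularity of $\omega$ into control of $\nabla u$, and a Gr\"onwall bootstrap in time.

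First I would record the evolution structure. The vorticity is transported, $\omega(t,\eta(t,x)) = \omega_0(x)$, by \eqref{e:omegaFlow}. Differentiating \eqref{e:Y} in $t$ and using \eqref{e:etaDef} shows $Y$ is stretched by the flow,
\[
    \partial_t Y + (u\cdot\nabla)Y = (Y\cdot\nabla)u,
\]
and, crucially, because $\dv u = 0$ the scalar $g := \dv(\omega Y)$ is purely transported, $\partial_t g + u\cdot\nabla g = 0$, so $g(t) = g_0\circ\eta^{-1}(t)$ stays in $C^{\alpha-1}$ with norm controlled by the \Holder norm of the flow, where $g_0 = \dv(\omega_0 Y_0)\in C^{\alpha-1}$ by \eqref{e:Assumption}. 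Writing the tangential derivative as $\partial_Y\omega = g - \omega\,\dv Y$ and using $\dv Y\in C^\alpha$ then encodes the $Y$-direction regularity of $\omega$ as a $C^{\alpha-1}$ bound, uniformly up to the jump set $\Sigma_t := \eta(t,\Sigma)$.

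Next is the heart of the matter. From \eqref{e:BS} I would decompose $\nabla u = \tfrac12\,\omega\,J + S[\omega]$, where $J$ is rotation by $\pi/2$ (the antisymmetric part forced by $\omega = \partial_1 u^2 - \partial_2 u^1$) and $S$ is the rate-of-strain tensor, a matrix of order-zero Calder\'on--Zygmund operators $\partial_i\partial_j\Cal F\,\ast$. Since such operators fail to map $L^\iny$ into $L^\iny$, the bare estimate on $\norm{\nabla u}_{L^\iny}$ breaks down, and the gain must come from commuting $S$ with $\partial_Y$ and exploiting $\partial_Y\omega\in C^{\alpha-1}$; because $\abs{Y}\ge c>0$ near $\Sigma$ one may \emph{divide by} $Y$ to recover $\omega$ from its tangential derivative. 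This commutator analysis should produce three outputs: the logarithmic bound
\[
    \norm{\nabla u}_{L^\iny}
        \lesssim \norm{\omega}_{L^\iny}\Bigl(1 + \log^+\tfrac{\norm{Y}_{C^\alpha}\,(1+\norm{g}_{C^{\alpha-1}})}{c\,\norm{\omega}_{L^\iny}}\Bigr);
\]
the fact that the tangential derivative $\partial_Y u = (\nabla u)Y$ is genuinely $C^\alpha$ (tangential differentiation does not see the jump), with $\norm{\partial_Y u}_{C^\alpha}$ controlled by $\norm{Y}_{C^\alpha}$ and $\norm{g}_{C^{\alpha-1}}$; and the identification of the non-\Holder part of $S[\omega]$ as $\omega$ times a matrix built from the unit direction $Y/\abs{Y}$, which I take to be the matrix $A$. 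Making this commutator/stripping estimate precise, uniformly up to $\Sigma_t$ where $\omega$ jumps, is what I expect to be the main obstacle.

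Finally I would propagate in time. The stretching equation together with the $C^\alpha$ bound on $\partial_Y u$ gives a closed differential inequality $\tfrac{d}{dt}\norm{Y(t)}_{C^\alpha}\lesssim \norm{\nabla u(t)}_{L^\iny}\norm{Y(t)}_{C^\alpha} + (\text{lower order})$, while $\inf_{\Ndelta{\Sigma}{\delta_0}}\abs{Y}$ decays at most exponentially and transport keeps $\norm{g(t)}_{C^{\alpha-1}}$ controlled. Substituting the logarithmic estimate gives the Osgood-type relation $\tfrac{d}{dt}\norm{Y}_{C^\alpha}\lesssim \norm{Y}_{C^\alpha}\log^+\norm{Y}_{C^\alpha}$, whose integration yields the double-exponential $\norm{Y(t)}_{C^\alpha}\le C_\alpha e^{e^{c_\alpha t}}$ and, fed back, the single-exponential $\norm{\nabla u(t)}_{L^\iny}\le c_\alpha e^{c_\alpha t}$ of \eqref{e:MainBounds}. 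The matrix $A(t)$ is the geometric piece isolated in the stationary step, evaluated at time $t$; since $Y(t)\in C^\alpha$ with $\abs{Y(t)}$ bounded below on a neighborhood of $\Sigma_t$, we get $A(t)\in C^\alpha$, and the remainder $\nabla u(t) - \omega(t)A(t)$ is $C^\alpha$ for all time, giving \eqref{e:AEq}.
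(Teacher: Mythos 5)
Your architecture is the standard one for this problem and matches the paper at the level of bookkeeping: transport of $\dv(\omega Y)$ in $C^{\al-1}$, a stretching estimate for $\norm{Y}_{C^\al}$ driven by $\norm{\grad u}_{L^\iny}$ and by a $C^\al$ bound on $Y\cdot\grad u$, and a Gronwall/Osgood closure giving the exponential and double-exponential bounds. But the entire content of the theorem is the step you defer: passing from ``$\omega$ has tangential regularity along a nondegenerate $Y$'' to an $L^\iny$ bound on the strain $S[\omega]$. The mechanism you propose --- ``commuting $S$ with $\partial_Y$ and dividing by $Y$'' --- is not a workable argument as stated. A single directional derivative $\partial_Y\omega\in C^{\al-1}$ does not determine $\omega$ beyond its $L^\iny$ bound, so there is nothing to ``divide by $Y$'' to recover; and the commutator of a Calder\'on--Zygmund operator with multiplication by a merely $C^\al$ vector field produces errors exactly as singular as the object being bounded unless the kernel is first truncated at a scale tied to the flow. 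The logarithmic inequality you write down is (a version of) the geometric lemma of Bertozzi--Constantin and Chemin; asserting it is assuming the hard part of the theorem.

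What the paper actually does at this point is quite specific, and none of it appears in your sketch. The kernel is split at a radius $r$ chosen as $\exp\pr{-C'\int_0^t V_\eps}$ in \cref{e:rChoice}, the far field producing the $-\log r$ term; the near field is handled by the linear algebra lemma, \cref{L:SerfatiLemma1}, which controls a symmetric matrix $B$ by $\abs{BM_1}$ and $\abs{\tr B}$ for $M$ built from $Y_\eps$ and a pushed-forward transversal direction, with $\det M=\abs{Y_0}^2\circ\eta_\eps^{-1}\ge c^2$ on the relevant set --- this is where the lower bound on $\abs{Y_0}$ actually enters, not through any division. The trace is bounded using $\Delta\Cal{F}=\delta$ and the vanishing of the cutoff at the origin, and $BY_\eps$ splits into a commutator term of size $\norm{Y_\eps}_{C^\al}r^\al$ plus a term involving $\mu_{rh}\grad\Cal{F}*\dv(\omega_\eps Y_\eps)$; the latter cannot be estimated through $\omega_\eps Y_\eps$ (which is not uniformly $C^\al$) and requires replacing it by the divergence-equivalent field $R_\eps$ of \cref{e:R0Def}, which is uniformly $C^\al$ by \cref{L:R0Decomp}. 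One also needs the good/bad decomposition of the vorticity relative to $\Ndelta{\Sigma}{\delta_0}$, and the convergence argument for the mollified solutions, which is where the hypothesis $\dv Y_0\in C^\al$ is actually used. Until the $L^\iny$ estimate on the strain is carried out by some such device, your proposal is a correct outline with the proof missing at its center; the same remark applies to the construction of $A$, whose $C^\al$ regularity rests on the quadratic injection of $Y_\eps$ into the singular integral in \cref{L:AMatrixCentralLemma}, not merely on $Y/\abs{Y}$ being $C^\al$ where $\abs{Y}$ is bounded below.
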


\begin{remark}\label{R:SerfatidivY}
    The first part of \cref{T:Serfati} giving \cref{e:MainBounds} is the same result as
    that of Chemin in \cite{Chemin1993Persistance}. 
    In \cite{Chemin1993Persistance}, though, Chemin assumes $\dv Y_0 = 0$
    (dropping this restriction in \cite{C1998}). Serfati does
    not state this restriction on $\dv Y_0$; the present authors, however,
    were unable to determine
    from Serfati's proof whether or not he meant to do so. Given that he
    did not add the required condition $\dv Y_0 \in C^\al$, it seems likely
    that he did intend to do so. We will show in our version of the proof that, in any case,
    $\dv Y_0 = 0$ is not required to complete the proof.
\end{remark}

A classical vortex patch satisfies \cref{e:Assumption}, as we show in \cref{S:ProofOfVortexPatch}. We describe other examples satisfying \cref{e:Assumption} in \cref{S:Examples}.

A number of additional useful facts follow from the proof of \cref{T:Serfati} or are simple consequences of it. We summarize these facts in \cref{T:SerfatiAdditional,T:LocalPropagation}.

\begin{theorem}\label{T:SerfatiAdditional}
    Let $\omega_0$, $Y_0$ be as in \cref{T:Serfati}.
    Then
    \begin{align}
        \norm{\dv Y(t)}_{C^\al}
            &\le \norm{\dv Y_0}_{C^\al} e^{e^{c_\al t}},
                \label{e:YdivBound}\\
    	\norm{\dv (\omega Y)(t)}_{C^{\al - 1}}
    		&\le C_\al e^{e^{c_\al t}},
    			\label{e:omegaYdivBound} \\
        \norm{(Y \cdot \grad) u(t)}_{C^\al}
            &\le C_\al e^{e^{c_\al t}},
                \label{e:YgraduBound} \\
        \norm{\grad \eta(t)}_{L^\iny}, \, \smallnorm{\grad \eta^{-1}(t)}_{L^\iny}
            &\le e^{e^{c_\al t}},
                \label{e:gradetaMainBound} \\
        \norm{A(t)}_{C^\al}, \,
        \norm{\nabla u(t) - \omega(t) A(t)}_{C^\al}
            &\le C_\al e^{e^{c_\al t}}.
                \label{e:ABound}
    \end{align}
    Suppose that $Y_0$ is divergence-free, and let $\phi_0$ be a stream
    function\footnote{For any continuous divergence-free vector field
    a stream function always exists and is unique up to an additive constant.}
    for $Y_0$; that is, $Y_0 = \grad^\perp \phi_0$. Let $\phi$ be $\phi_0$ transported
    by the flow, so that $\phi(t, x) := \phi_0(\eta^{-1}(t, x))$.
    Further, suppose that
    $\gamma_0$ is a $C^{1 + \al}$ level curve of $\phi_0$ and
    let $\gamma(t, \cdot) = \eta(t, \gamma_0(\cdot))$. Then $\gamma(t)$ is a $C^{1 + \al}$
    level curve of $\phi(t)$ with
    \begin{align}\label{e:LevelCurveSmoothness}
        \norm{\gamma(t)}_{C^{1 + \al}}
            \le C_\al e^{e^{c_\al t}}.
    \end{align}
\end{theorem}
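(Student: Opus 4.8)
The plan is to deduce every bound from two transport structures, the quantitative estimates already recorded in \cref{T:Serfati}, and elementary composition and commutator estimates in \Holder spaces. The organizing observation is that both $Y$ and $\omega Y$ are pushforwards of their initial data: $Y(t) = \eta(t)_* Y_0$ by \cref{e:Y}, while $\omega(t,\eta(t,x)) = \omega_0(x)$ gives $(\omega Y)(t,\eta(t,x)) = \grad\eta(t,x)\pr{\omega_0(x)Y_0(x)}$, so $\omega Y = \eta(t)_*(\omega_0 Y_0)$. Because $\dv u = 0$ forces $\det\grad\eta(t)\equiv1$, the flow is volume preserving, and for any vector field $V_0$ the divergence of the pushforward is the transported divergence, $\dv\pr{\eta(t)_* V_0} = (\dv V_0)\circ\eta^{-1}(t)$. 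Taking $V_0 = Y_0$ and $V_0 = \omega_0 Y_0$ yields the (distributional) identities $\dv Y(t) = (\dv Y_0)\circ\eta^{-1}(t)$ and $\dv(\omega Y)(t) = \dv(\omega_0 Y_0)\circ\eta^{-1}(t)$, which I would first establish on smooth approximations and then pass to the limit.

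I would then prove \cref{e:gradetaMainBound}: differentiating \cref{e:etaDef} in $x$ gives $\prt_t\grad\eta(t) = \brac{(\grad u)(t)\circ\eta(t)}\grad\eta(t)$, so Gronwall with $\norm{\grad u(t)}_{L^\iny}\le c_\al e^{c_\al t}$ from \cref{e:MainBounds} gives $\norm{\grad\eta(t)}_{L^\iny}\le\exp\pr{\int_0^t c_\al e^{c_\al s}\,ds}\le e^{e^{c_\al t}}$; since $\det\grad\eta\equiv1$ makes $(\grad\eta)^{-1}$ its adjugate, $\grad\eta^{-1}$ obeys the same bound. Then \cref{e:YdivBound} follows from the transported identity for $\dv Y$ and the composition estimate $\norm{g\circ\psi}_{C^\al}\le\norm{g}_{C^\al}\pr{1+\norm{\grad\psi}_{L^\iny}^\al}$ with $\psi=\eta^{-1}(t)$, using $\dv Y_0\in C^\al$ from \cref{e:Assumption}. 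Estimate \cref{e:omegaYdivBound} follows in the same way from the transported identity for $\dv(\omega Y)$, now invoking the analogue of the composition estimate on the negative-index space $C^{\al-1}$; this is available because $\smallabs{\al-1}<1$ and $\eta^{\pm1}(t)$ are bi-Lipschitz with the bounds just obtained, and $\dv(\omega_0 Y_0)\in C^{\al-1}$ by \cref{e:Assumption}.

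For \cref{e:YgraduBound} I would write, componentwise from $u = K*\omega$, $\pr{(Y\cdot\grad)u}^i = Y^j\pr{\prt_j K^i*\omega}$ and split it as the commutator $\sum_j\brac{Y^j(\prt_j K^i*\omega)-\prt_j K^i*(Y^j\omega)}$ plus $\sum_j\prt_j K^i*(Y^j\omega) = K^i*\dv(\omega Y)$. The kernel $\prt_j K^i(x-y)\pr{Y^j(x)-Y^j(y)}$ of the commutator is only weakly singular for $Y\in C^\al$, so the commutator maps $(L^1\cap L^\iny)$ into $C^\al$ with norm controlled by $\norm{Y}_{C^\al}$; the second term lies in $C^\al$ because $K$ is homogeneous of degree $-1$, so $K*$ maps $C^{\al-1}$ into $C^\al$, and $\dv(\omega Y)\in C^{\al-1}$ by \cref{e:omegaYdivBound}. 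Combined with the bound on $\norm{Y(t)}_{C^\al}$ from \cref{e:MainBounds} this gives \cref{e:YgraduBound}. The estimates \cref{e:ABound} then follow by substituting the bounds on $\norm{Y(t)}_{C^\al}$ and $\norm{\dv(\omega Y)(t)}_{C^{\al-1}}$ into the explicit formula \cref{e:AExplicit} for $A$, the $C^\al$ membership of $\grad u-\omega A$ being exactly \cref{e:AEq}.

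In the divergence-free case, transport of scalars makes $\phi(t)=\phi_0\circ\eta^{-1}(t)$ constant on $\gamma(t)=\eta(t,\gamma_0)$, so $\gamma(t)$ is a level curve of $\phi(t)$; and in two dimensions $\det\grad\eta\equiv1$ gives $\eta(t)_*\grad^\perp\phi_0 = \grad^\perp\pr{\phi_0\circ\eta^{-1}(t)}$, i.e. $Y(t)=\grad^\perp\phi(t)$. Nondegeneracy persists, since $\abs{Y(t)}=\abs{\grad\eta\,Y_0}\ge\abs{Y_0}/\norm{\grad\eta^{-1}}_{L^\iny}\ge c\,e^{-e^{c_\al t}}$ near $\gamma(t)$, so the unit tangent $Y(t)/\abs{Y(t)}$ along $\gamma(t)$ is a ratio of $C^\al$ functions with nonvanishing denominator and hence $C^\al$; this is precisely $\gamma(t)\in C^{1+\al}$, and \cref{e:LevelCurveSmoothness} follows from the bound on $\norm{Y(t)}_{C^\al}$ in \cref{e:MainBounds} together with the lower bound on $\abs{Y(t)}$. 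I expect the main obstacle to be the rigorous treatment of the transported-divergence identities, and in particular of the negative-\Holder composition estimate behind \cref{e:omegaYdivBound}: the flow is only bi-Lipschitz because $\grad u$ is merely bounded, so the distribution $\dv(\omega_0 Y_0)\in C^{\al-1}$ must be composed with a map of limited smoothness. I would handle this by regularizing the data, applying the identities to the resulting smooth flows where all manipulations are classical, and passing to the limit using the uniform bounds of \cref{T:Serfati}.
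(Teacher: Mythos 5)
Most of your route coincides with the paper's: the transported-divergence identities are the paper's \cref{e:divomegaY} (derived there from the transport equations \cref{e:YTransportDiv} rather than from volume preservation, but these are the same fact), \cref{e:gradetaMainBound} is the same Gronwall argument, the decomposition of $Y\cdot\grad u$ into a commutator plus $K*\dv(\omega Y)$ is exactly \cref{C:graduCor} combined with \cref{L:EquivalentConditions}, and the level-curve argument via $Y(t)=\grad^\perp\phi(t)$ and the persistence of the lower bound on $\abs{Y}$ is the argument of \cref{S:ProofOfVortexPatch}. There is, however, a genuine gap at \cref{e:ABound}. The bound on $\norm{A(t)}_{C^\al}$ does follow from \cref{e:AExplicit}, \cref{e:MainBounds}, and the persistent lower bound on $\abs{Y}$ on $\supp\chi(\eta^{-1})$, as you indicate. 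But the second half of \cref{e:ABound} is a \emph{quantitative}, doubly exponential estimate on $\norm{\grad u(t)-\omega(t)A(t)}_{C^\al}$, and it is not ``exactly \cref{e:AEq}'': \cref{e:AEq} asserts only membership in $C^\al$, and knowing $\norm{\grad u}_{L^\iny}$, $\norm{\omega}_{L^\iny}$, and $\norm{A}_{C^\al}$ controls the $L^\iny$ norm of the difference but says nothing about its \Holder seminorm, which rests on a cancellation between the singular part of $\grad u$ and $\omega A$. In the paper this bound is obtained by rerunning the construction of \cref{S:MatrixA} with constants tracked---the decomposition $\grad u_\eps-\omega_\eps \ol{A}_\eps=\text{I}_1+\text{I}_2+D_\eps$ with $D_\eps$ controlled in \cref{L:AMatrixCentralLemma} via the quadratic injection of $Y_\eps$ and the singular-integral estimates. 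Your proposal contains no substitute for this step.

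The second weak point is the $C^{\al-1}$ propagation behind \cref{e:omegaYdivBound}, which also feeds your proof of \cref{e:YgraduBound}. You rightly flag it as the main obstacle, but your proposed resolution does not close it. The naive composition argument fails: writing $h=f+\dv v$ with $f,v\in C^\al$, volume preservation gives $(\dv v)\circ\eta^{-1}=\dv(\eta_* v)$, but $\eta_* v=(\grad\eta\, v)\circ\eta^{-1}$ is only $L^\iny$ when $\grad\eta$ is only $L^\iny$, so the definition of $C^{\al-1}$ yields nothing. Regularizing and passing to the limit does not help either, since the classical manipulations on smooth flows produce bounds involving higher derivatives of $\eta$; the entire content of the estimate is that it depends only on $\exp\int_0^t\norm{\grad u}_{L^\iny}$, and that uniformity is precisely what must be proved. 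The paper invokes Theorem 3.14 of \cite{BahouriCheminDanchin2011} (transport in $B^{\al-1}_{\iny,\iny}$, a paradifferential result) in \cref{L:RegularitywY}. An elementary alternative, consistent with the rest of your argument, is to replace $\omega_0 Y_0$ by the corrected field $R_0$ of \cref{e:R0Def}, which has the same divergence but lies in $C^\al$; then $\dv(\omega Y)(t)=\dv R(t)$ with $R(t)$ the pushforward of $R_0$, bounded in $C^\al$ as in \cref{e:YRepsBounds}, and \cref{e:omegaYdivBound} follows from \cref{e:dvvCalBound}. Some such device is needed; bi-Lipschitz regularity of the flow alone is not enough.
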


The bounds in \crefrange{e:YdivBound}{e:ABound} are proven as part of the proof of \cref{T:Serfati}. We prove \cref{e:LevelCurveSmoothness} in \cref{S:ProofOfVortexPatch}.
 The bound in \cref{e:YgraduBound} means that $u$ remains $C^{1 + \al}$-smooth in the direction of $Y$. (For a classical vortex patch, it means that $\grad u$ is discontinuous only across the boundary.)

A simple consequence of \cref{T:Serfati} is the local propagation of \Holder regularity stated in \cref{T:LocalPropagation}. Critical to its proof is Serfati's construction of the matrix $A$;  \cref{e:graduCalLocalBound} of \cref{T:LocalPropagation} does not follow from \cite{Chemin1993Persistance} or \cite{C1998}, which has no analog of $A$.

\begin{theorem}\label{T:LocalPropagation}
    Let $\omega_0$, $Y_0$
    be as in \cref{T:Serfati}.
    If $\omega_0 \in C^{\beta}(U)$ for some open subset $U$ of $\R^2$
    and $\beta \in [0, 1)$ then
    $\omega(t) \in C^\beta(U)$ for all $t$, with
    \begin{align}\label{e:omegaCalLocalBound}
        \norm{\omega(t)}_{C^\beta(U_t)}
            \le \norm{\omega_0}_{C^\beta(U)}
                e^{e^{c_\al t}},
    \end{align}
    where $U_t = \eta(t, U)$.
    Further,
    \begin{align}\label{e:graduCalLocalBound}
            \norm{\grad u(t)}_{C^\al(U_t)}
            \le C_\al e^{e^{c_\al t}}.
    \end{align}
\end{theorem}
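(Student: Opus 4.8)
The plan is to prove the two estimates separately, in each case reducing the claim to facts already available from \cref{T:Serfati,T:SerfatiAdditional}. First I would treat \cref{e:omegaCalLocalBound}, which is purely kinematic. By the transport identity \cref{e:omegaFlow}, $\omega(t,\cdot)=\omega_0\circ\eta^{-1}(t,\cdot)$, so on $U_t=\eta(t,U)$ the vorticity is the composition of $\omega_0\in C^\beta(U)$ with the map $\eta^{-1}(t,\cdot)\colon U_t\to U$. Since $\norm{\grad\eta^{-1}(t)}_{L^\iny}$ is finite by \cref{e:gradetaMainBound}, this map is Lipschitz, the sup-norm of $\omega$ is preserved under composition, and the elementary Hölder composition estimate gives
\[
    [\omega(t)]_{C^\beta(U_t)}
        \le [\omega_0]_{C^\beta(U)}\,\norm{\grad\eta^{-1}(t)}_{L^\iny}^{\beta}.
\]
Inserting $\norm{\grad\eta^{-1}(t)}_{L^\iny}\le e^{e^{c_\al t}}$ from \cref{e:gradetaMainBound} and using $\beta<1$ to absorb the exponent then yields \cref{e:omegaCalLocalBound}.

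For \cref{e:graduCalLocalBound} the essential input is Serfati's decomposition from \cref{T:Serfati}: writing $R(t):=\grad u(t)-\omega(t)A(t)$, \cref{e:AEq,e:ABound} give that both $A(t)$ and $R(t)$ lie in $C^\al(\R^2)$ \emph{globally}, with norms bounded by $C_\al e^{e^{c_\al t}}$. Restricting to $U_t$, the remainder $R(t)$ is therefore already controlled in $C^\al(U_t)$, and it remains only to estimate the product $\omega(t)A(t)$ there. For this I would apply the Hölder product inequality
\[
    \norm{\omega(t)A(t)}_{C^\al(U_t)}
        \le \norm{\omega(t)}_{C^\al(U_t)}\norm{A(t)}_{L^\iny}
            + \norm{\omega(t)}_{L^\iny}\norm{A(t)}_{C^\al(U_t)},
\]
using the $C^\al$-regularity of $\omega(t)$ on $U_t$ furnished by the first part (applied to the local data at the $C^\al$ level) together with the global $L^\iny$ and $C^\al$ bounds on $\omega$ and $A$. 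Adding the two contributions and absorbing the product of the nested-exponential factors into a single constant of the form $C_\al e^{e^{c_\al t}}$ gives \cref{e:graduCalLocalBound}.

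The step I expect to be the crux is the velocity-gradient bound, and specifically its reliance on the matrix $A$. Because $\grad u=\grad K*\omega$ is a singular integral of $\omega$, one cannot in general infer local $C^\al$ regularity of $\grad u$ from local regularity of $\omega$ while $\omega$ carries genuine discontinuities elsewhere (across $\Sigma$ and its transport); this is exactly the obstruction that $A$ removes by rendering $\grad u-\omega A$ globally $C^\al$. Beyond invoking \cref{T:Serfati}, the only real care needed is to ensure that the factor multiplying $A$ is genuinely $C^\al$ on $U_t$, so that the product estimate closes at the $C^\al$ level rather than at the lower exponent $\min(\al,\beta)$, and to keep the bookkeeping of the double exponentials consistent. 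Both are routine once the decomposition is in hand, which is why \cref{e:graduCalLocalBound}, unlike the corresponding results of \cite{Chemin1993Persistance,C1998}, is accessible only through Serfati's construction of $A$.
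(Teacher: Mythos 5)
Your proposal is correct and takes essentially the same route as the paper: the H\"older-quotient argument for $\omega_0\circ\eta^{-1}$ combined with \cref{e:gradetaMainBound} gives \cref{e:omegaCalLocalBound}, and the decomposition $\grad u = (\grad u - \omega A) + \omega A$ together with \cref{e:ABound} gives \cref{e:graduCalLocalBound}. The paper's proof of the second estimate is a single sentence (``follows from \cref{e:ABound}''); your product-rule elaboration, including the observation that closing at the $C^\al$ level needs the local regularity of $\omega$ at exponent $\al$, is exactly the intended argument.
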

\begin{proof}
For any $x, y \in U_t$,
\begin{align*}
    \frac{\abs{\omega(t, x) - \omega(t, y)}}{\abs{x - y}^\beta}
        = \frac{\abs{\omega_0(\eta^{-1}(t, x)) - \omega_0(\eta^{-1}(t, y))}}
            {\abs{\eta^{-1}(t, x) - \eta^{-1}(t, y)}^\beta}
            \pr{\frac{\abs{\eta^{-1}(t, x) - \eta^{-1}(t, y)}}
                {\abs{x - y}}}^\beta.
\end{align*}
Together with \cref{e:gradetaMainBound} this gives \cref{e:omegaCalLocalBound} (a bound that holds for any Lipschitz velocity field). The bound in \cref{e:graduCalLocalBound} then follows from \cref{e:ABound}.
\end{proof}

\cref{T:LocalPropagation} improves, for initial data satisfying \cref{e:Assumption}, existing estimates of local propagation of \Holder regularity for bounded initial vorticity. For instance, Proposition 8.3 of \cite{MB2002} would only give $\grad u(t) \in C^\al_{loc}(U_t)$. 


In \cite{C1998}, Chemin extends the result he established in \cite{Chemin1993Persistance} by employing a family of vector fields in whose the direction the initial vorticity has regularity. We do the same for Serfati's initial conditions in \cref{S:SerfatiGen}, yielding the same result as Chemin. Moreover, we show in \cref{S:EquivInitConditions} that \cref{e:Assumption}$_3$ is equivalent to
\begin{align*} 
    Y_0 \cdot \grad u_0 \in C^\al(\R^2),
\end{align*}
meaning the initial velocity field has $C^{1 + \al}$ regularity in the direction of $Y_0$. By \cref{e:YgraduBound}, this regularity persists for all time.

In outline, Serfati's proof involves showing that the vorticity is transported over time in such a manner that its discontinuities are characterized by $Y(t)$. The regularity of $Y(t)$ is shown to be retained over all time, its estimate being inextricably entwined with an estimate on $\grad u(t)$ in $L^\iny$. At this high level, Serfati's approach is the same as that of Chemin in \cite{Chemin1993Persistance} and Constantin and Bertozzi in \cite{ConstantinBertozzi1993}.

Thus, though Serfati's approach is novel in many ways, it clearly owes much to both \cite{Chemin1993Persistance} and \cite{ConstantinBertozzi1993}. Like Chemin, Serfati proves a more general result involving the persistence of tangential regularity along a vector field, for which a vortex patch is a special case.
Like Bertozzi and Constantin, Serfati uses estimates on singular integrals, some of them in much the same manner (\cref{C:graduCor}, for instance, a special case of which is used in \cite{ConstantinBertozzi1993}). Unlike \cite{ConstantinBertozzi1993}, however, Serfati uses no ``geometric lemma'' and there is also no clear analog in \cite{Chemin1993Persistance, ConstantinBertozzi1993} of Serfati's linear algebra lemma, \cref{L:SerfatiLemma1}.

More concisely, one could say that the setup of the problem and the use of transport estimates is much like that of \cite{Chemin1993Persistance} while the estimates involving the gradient of the velocity are more like that of \cite{ConstantinBertozzi1993}, but the most difficult to estimate term is bounded in an entirely novel way. Plus, Serfati characterizes the gradient of the velocity more fully. (Also see the comments at the end of \cref{S:EquivInitConditions}.)

There has been a number of papers since \cite{SerfatiVortexPatch1994} related to the regularity of the boundary of vortex patches. Among those treating 2D vortex patches are \cite{CW1995, C1996, Danchin1996, Danchin1997, SueurViscousVortexPatches2014}, which study the inviscid limit (see \cite{SueurViscousVortexPatches2014} for historical comments as well); \cite{Chemin1995SingularVortexPatches, Danchin1997SingularVortexPatches, Danchin1997Cusp, Danchin2000Cusp}, which study vortex patch boundaries having singularities; \cite{Depauw1998, Depauw1999} for vortex patches in a bounded domain.
None of these, however, have used techniques from \cite{SerfatiVortexPatch1994}: they are all intellectual descendants of either \cite{Chemin1993Persistance, C1998} or \cite{ConstantinBertozzi1993} (or both).

\medskip

This paper is organized as follows. In \cref{S:Notation}, we fix some notation and make a few  definitions. In \cref{S:Lemmas}, we state six key lemmas we will need. In \cref{S:Transport}, we study the transport equations of $Y$ and a related vector field $R$, as well as the propagation of regularity of $\dv(\omega Y)$. In \cref{S:ProofOfSerfatisResult}, we prove the first part of \cref{T:Serfati}, the bounds in \cref{e:MainBounds}, and also prove \crefrange{e:YdivBound}{e:gradetaMainBound}. In \cref{S:MatrixA}, we prove the second part of \cref{T:Serfati}, the existence of the matrix $A$ satisfying \cref{e:AEq}, along with the proof of \cref{e:ABound}. In \cref{S:ProofOfVortexPatch}, we consider the case of a classical vortex patch, showing how \cref{T:VortexPatch} follows from \cref{T:Serfati}, along the way proving \cref{e:LevelCurveSmoothness}. In \cref{S:SerfatiGen}, we describe an extension of Serfati's result to a family of vector fields like those of Chemin in \cite{C1998}, in \cref{S:EquivInitConditions} showing that the resulting hypotheses on the initial data are equivalent to those of \cite{C1998}. In \cref{S:Examples}, we discuss several examples of initial data that satisfy the hypotheses of \cref{T:Serfati}. \ifbool{ForSubmission}{}{In \cref{S:CircularVortexPatch} we give the form of the matrix $A$ for two simple examples. In \cref{S:Conclusion} we make some concluding remarks.}In \cref{A:ProofsOfLemmas}, we prove the lemmas stated in \cref{S:Lemmas}. In \cref{A:graduCalcs}, we detail some calculations involving $\grad u$. Finally, in \cref{A:TransportEstimates}, we discuss our use of weak transport equations.

\section{Notation, conventions, and definitions}\label{S:Notation}

\noindent
We define
\begin{align*} 
    \grad u
        := \matrix{\prt_1 u^1 & \prt_2 u^1}{\prt_1 u^2 & \prt_2 u^2}
        = Du,
\end{align*}
the Jacobian matrix of $u$,
and define the gradient of other vector fields in the same manner.
\Ignore{ 
It is then natural to treat $u$ as a column vector, allowing us to rewrite \cref{e:Eu}$_1$ as
\begin{align*}
    \prt_t u + \grad u \, u + \grad p
        = \prt_t u + Du \, u + \grad p
        = 0.
\end{align*}
(Were we to treat $u$ as a row vector, it would be more natural to define $\grad u = (Du)^T$, in which case $u \cdot \grad u = u \grad u$. There is no universal convention, as the need to define $\grad u$ explicitly seldom arises, but our choice does agree with that used implicitly in \cite{ConstantinBertozzi1993}.)

Further, we define the gradient of any vector field as in \cref{e:gradu}.
We will need, however, the gradient of only two other vector fields: of $\grad \Cal{F}$ and of the Biot-Savart kernel, $K = \grad^\perp \Cal{F}$. Both of these vector fields are symmetric (the latter on domains bounded away from the origin, which is the only use we will make of it), so there is no ambiguity anyway in their definition. (The gradient of other vector fields, such as $Y$, occurs only in the form $u \cdot \grad Y := (u \cdot \grad) Y$, so $\grad Y$ needs no separate definition.)
} 


We follow the common convention that 
the gradient
and divergence operators apply only to the spatial variables.

Another common convention we follow is that constants denoted by $C$ depend only on the quantities specified in the context or stated explicitly, such as in $C(\omega_0, Y_0)$, but do not depend on other parameters, such as $\al$ or $\eps$. In a series of inequalities, the value of $C$ can vary with each inequality. 

All vectors are to be treated as column vectors for linear algebra operations, even when written in the form $(v^1, v^2)$.

We write $\abs{v}$ for the Euclidean norm of $v = (v^1, v^2)$, $\abs{v}^2 = (v^1)^2 + (v^2)^2$. We use the same notation for matrices, but will find it more convenient to define, for $M = (M^{i j})$,
\begin{align*}
    \abs{M}
        := \max_{i, j} \abs{M^{i j}},
\end{align*}
so that $\abs{I} = 1$ for the identity matrix.
Of course, all norms on finite-dimensional spaces are equivalent, so these choices make little difference as they just change the values of constants.

If $X$ is a function space, we define
\begin{align*}
    \norm{v}_X
        := \norm{\abs{v}}_X, \quad
    \norm{M}_X
        := \norm{\abs{M}}_X.
\end{align*}

\begin{definition}[\Holder and Lipschitz spaces]\label{D:HolderSpaces}
        Let $\alpha \in(0, 1)$ and $U \subseteq \R^d$, $d \ge 1$, be open.
        Then $C^\al(U)$ is the space of all measurable functions for which
        \begin{align*}
            \norm{f}_{C^\alpha(U)}
                := \norm{f}_{L^\iny(U)} + \norm{f}_{\dot{C}^\alpha(U)}
                    < \iny, \quad
            \norm{f}_{\dot{C}^\alpha(U)}
                := \sup_{\substack{x, y \in U \\ x \ne y}}
                    \frac{\abs{f(x) - f(y)}}{\abs{x - y}^\alpha}.
        \end{align*}
        
        For $\al = 1$, we obtain the Lipschitz space, which is not called $C^1$
        but rather $Lip(U)$. We also define $lip(U)$ for the homogeneous space.
        Explicitly, then,
        \begin{align*}
            \norm{f}_{Lip(U)}
                := \norm{f}_{L^\iny(U)} + \norm{f}_{lip(U)}, \quad
            \norm{f}_{lip(U)}
                := \sup_{\substack{x, y \in U \\ x \ne y}}
                    \frac{\abs{f(x) - f(y)}}{\abs{x - y}}.
        \end{align*}
        
        For any positive integer $k$, $C^{k + \al}(U)$ is the space
        of $k$-times continuously differentiable functions on $U$ for which
        \begin{align*}
            \norm{f}_{C^{k + \al}(U)}
                := \sum_{\abs{\beta} \le k} \smallnorm{D^\beta f}_{L^\iny(U)}
                    + \sum_{\abs{\beta} = k} \smallnorm{D^\beta f}_{C^\al(U)}
                    < \iny.
        \end{align*}
        We define the negative \Holder space, $C^{\al - 1}(U)$, by
    \begin{align*}
        C^{\al - 1}(U)
            &= \set{f + \dv v \colon f, v \in C^\al(U)}, \\
        \norm{h}_{C^{\al - 1}(U)}
            &= \inf\set{
                \norm{f}_{C^\al(U)}
                    + \norm{v}_{C^\al(U)}
                    \colon
                        h = f + \dv v; \, f, \, v \in C^\al(U)
                }.
    \end{align*}
\end{definition}

It follows immediately from the definition of $C^{\al - 1}$ that
\begin{align}\label{e:dvvCalBound}
    \norm{\dv v}_{C^{\al - 1}}
        \le \norm{v}_{C^\al}.
\end{align}
We also have the elementary inequalities,
\begin{align}\label{e:CdotIneq}
    \begin{split}
        \norm{f \circ g}_{\dot{C}^\al}
            &\le \norm{f}_{\dot{C}^\al} \norm{\grad g}_{L^\iny}^\al, \\
        \norm{f g}_{C^\al}
            &\le \norm{f}_{C^\al} \norm{g}_{C^\al}.
    \end{split}
\end{align}

\Ignore{ 
\begin{definition}\label{D:CurveRegularity}
    We say that $\gamma$ is a $C^{k + \al}$ curve if
    $\gamma \colon I \to \R^2$ is $C^{k + \al}(I)$ and injective, where $I$ is an
    interval and the $C^{k + \al}(I)$ space is as in \cref{D:HolderSpaces}.
    If $\gamma$ is periodic (or is defined on the unit circle $\mathbb{S}^1$) then
    the curve is closed. We say that a one-dimensional manifold,
    such as the boundary of an open set, is
    $C^{k + \al}$ if it is the image of some 
    $C^{k + \al}$ curve.
\end{definition}
} 

\begin{definition}[The inf ``norm'']\label{D:infNorm}
    For any measurable subset $\Lambda \subseteq \R^2$ and measurable
    function $f$ on $\Lambda$, we define
    \begin{align*}
        \norm{f}_{\inf(\Lambda)} = - \norm{-\abs{f}}_{L^\iny(\Lambda)}.
    \end{align*}
\end{definition}

\begin{definition}\label{D:Sigmadelta}
    For any $\Lambda \subseteq \R^2$, we define
    $\Ndelta{\Lambda}{\delta} := \set{x\in \R^2: d(x, \Lambda) < \delta}$, where $d$ is the
    Euclidean distance in $\R^2$.  
\end{definition}

\begin{definition}[Radial cutoff functions]\label{D:Radial}
    We make an arbitrary, but fixed, choice of a radially symmetric function
    $a \in C_C^\iny(\R^2)$ taking values in $[0, 1]$ with $a = 1$ on $B_1(0)$
    and $a = 0$ on
    $B_2(0)^C$. For $r > 0$, we define the rescaled cutoff function, $a_r(x) = a(x/r)$,
    and for $r, h > 0$ we define
    \begin{align*}
        \mu_{rh} = a_r(1 - a_h).
    \end{align*}
\end{definition}

\begin{remark}\label{R:Radial}
When using the cutoff function $\mu_{rh}$ we will be fixing $r$ while taking $h \rightarrow  0$, in which case we can safely assume that $h$ is sufficiently smaller than $r$ so that $\mu_{rh}$ vanishes outside of $(h, 2r)$ and equals 1 identically on $(2h, r)$. It will then follow that
\begin{align*} 
    \left\{
    \begin{array}{rl}
        \halfspacer
        \abs{\grad \mu_{rh}(x)} \le C h^{-1} \le C \abs{x}^{-1}
            & \text{for } \abs{x} \in (h, 2h), \\
        \halfspacer
        \abs{\grad \mu_{rh}(x)} \le C r^{-1} \le C \abs{x}^{-1}
            & \text{for } \abs{x} \in (r, 2r), \\
        \grad \mu_{rh} \equiv 0
            & \text{elsewhere}.
    \end{array}
    \right.
\end{align*}
Hence, also, $\abs{\grad \mu_{rh}(x)} \le C \abs{x}^{-1}$ everywhere.
\end{remark}

\begin{definition}[Mollifier]\label{D:Mollifier}
    Let $\rho \in C_C^\iny(\R^2)$ with $\rho \ge 0$ have $\norm{\rho}_{L^1} = 1$.
    For $\eps > 0$, define $\rho_\eps(\cdot) = (\eps^{-2}) \rho(\cdot / \eps)$.
\end{definition}

\begin{definition}[Principal value integral]\label{D:PV}
For any measurable integral \textit{kernel}, $L \colon \R^2\times \R^2 \to \R$, and any measurable function, $f \colon \R^2 \to \R$, define the integral transform $L[f]$ by
\begin{align*}
    L[f](x)
        := \PV \int_{\R^2} L(x, y) f(y) \, dy
        &:= \lim_{h \to 0^+} \int_{\abs{x - y} > h} L(x, y) \, f(y) \, dy,
\end{align*}
whenever the limit exists.
\end{definition}

When dealing with algebraic manipulations of principal value integrals, we will find it convenient to introduce the notation in \cref{D:PValg}.
\begin{definition}\label{D:PValg}
    \begin{align*}
        f \starp g (x)
                := \PV \int f(x - y) g(y) \, dy.
    \end{align*}
\end{definition}

%
%
\section{Key lemmas}\label{S:Lemmas}

\noindent \crefrange{L:SerfatiLemma1}{L:Gronwall} are six key lemmas we will need in the proof of \cref{T:Serfati}. \Cref{L:SerfatiLemma1,L:SerfatiLemma2} are the two lemmas of \cite{SerfatiVortexPatch1994}: a simple, if seemingly unmotivated, linear algebra lemma and an estimate on an integral transform that includes singular integrals. \cref{L:SerfatiLemma2Inf} is a variant on \cref{L:SerfatiLemma2}, while \cref{L:SerfatiKernels} gives explicit estimates on the four kernels to which we will apply \cref{L:SerfatiLemma2,L:SerfatiLemma2Inf}. \cref{L:EquivalentConditions} is used to establish the equivalence of \cref{e:SerfatiInitialData}$_3$ and \cref{e:Assumption}$_3$, and \cref{L:Gronwall} is the form of Gronwall's lemma that we will need. We give the proof of \cref{L:SerfatiKernels} in this section and defer the proof of the other lemmas (except for Gronwall's lemma, which is classical)  to \cref{A:ProofsOfLemmas}.

\begin{lemma}\label{L:SerfatiLemma1}
    Let
    $
        M
            = \matrix{a & b}{c & d}
    $
    be an invertible matrix.
    For any  $2 \times 2$ symmetric matrix $B$,
    \begin{align*}
        \displaystyle \abs{B}
            \le C \frac{\abs{M}}{\det M}
                 \abs{B M_1}
                + C \abs{\tr B},
    \end{align*}
    \Ignore{ 
    \begin{align*}
        \norm{B}
            \le C \brac{
                \frac{a^{5}+b^{5}+c^{5}+d^{5}}{(a^2+c^2)  \det M}
                \abs{BM_1} +\abs{\tr B}
                },
    \end{align*}
    }
    where $M_1 =(a,c)^T$ is the first column of $M$.
\end{lemma}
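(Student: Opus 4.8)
The plan is to exploit the symmetry of $B$ to reduce the statement to solving an explicit $3 \times 3$ linear system and then estimating its solution. Write $B = \begin{pmatrix} p & q \\ q & r \end{pmatrix}$, so that $\abs{B} = \max\{\abs{p}, \abs{q}, \abs{r}\}$, and record the three scalars we are handed: the two components of $B M_1 = (pa + qc, \; qa + rc)$ and the trace $\tr B = p + r$. These furnish three linear equations for the three unknowns $p, q, r$, with coefficient matrix $\begin{pmatrix} a & c & 0 \\ 0 & a & c \\ 1 & 0 & 1 \end{pmatrix}$ whose determinant is exactly $a^2 + c^2 = \abs{M_1}^2$. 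Since $M$ is invertible its first column $M_1$ cannot vanish, so the system is uniquely solvable and $p, q, r$ are determined by $B M_1$ together with $\tr B$.

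First I would solve the system by Cramer's rule, obtaining explicit expressions such as $p = (a s - c t + c^2 \sigma)/(a^2 + c^2)$, with analogous formulas for $q$ and $r$, where $s, t$ denote the components of $B M_1$ and $\sigma = \tr B$. In every case the denominator is $\abs{M_1}^2 = a^2 + c^2$, and each numerator is a sum of terms of the form (an entry of $M_1$)$\,\times s$ or $t$, or (a product of two entries of $M_1$)$\,\times \sigma$.

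The estimation then rests on two elementary bounds. The first is $\frac{\abs{a} + \abs{c}}{a^2 + c^2} \le \frac{\sqrt2}{\abs{M_1}}$ (Cauchy--Schwarz), together with the trivial estimates $\frac{a^2}{a^2+c^2}, \, \frac{c^2}{a^2+c^2}, \, \frac{\abs{a}\abs{c}}{a^2+c^2} \le 1$; applying these to the three Cramer formulas yields $\abs{B} \le \frac{C}{\abs{M_1}}\abs{B M_1} + C\abs{\tr B}$ with an absolute constant $C$. The second bound converts $\abs{M_1}^{-1}$ into the desired form: from $\abs{\det M} = \abs{ad - bc} \le \abs{M}(\abs{a} + \abs{c}) \le 2\abs{M}\abs{M_1}$ (using $\abs{a}, \abs{c} \le \abs{M_1}$ and $\abs{b}, \abs{d} \le \abs{M}$) one gets $\frac{1}{\abs{M_1}} \le \frac{2\abs{M}}{\det M}$. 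Substituting this into the previous estimate gives the stated inequality.

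The main obstacle here is conceptual rather than technical: recognizing that a symmetric $B$ carries exactly three degrees of freedom, and that $B M_1$ (two scalars) together with $\tr B$ (one scalar) pin it down, with the invertibility of this correspondence governed precisely by $\abs{M_1}^2$. Once this is seen, the remaining work is the routine Cramer computation and the two norm estimates; the only point requiring care is arranging the final constant to depend on $M$ solely through the combination $\abs{M}/\det M$, which is exactly what the second elementary bound provides.
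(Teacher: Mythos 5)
Your proof is correct, but it follows a genuinely different route from the paper's. The paper proceeds by pure matrix algebra: it introduces $E = \begin{pmatrix} a & -c \\ c & a\end{pmatrix}$ and the adjugate $F = (\det M)M^{-1}$, writes $B = E E^T B M F / \det(ME)$, and then computes $E^T B M$ entry by entry, observing that symmetry of $B$ makes three of its entries inner products of fixed vectors with $BM_1$ and the fourth a multiple of $\det M \,\tr B$ plus such an inner product; the estimate then comes from bounding $\abs{E}$, $\abs{F}$, and that middle matrix. You instead take the three degrees of freedom of the symmetric matrix $B=\begin{pmatrix} p & q \\ q & r\end{pmatrix}$ head-on, note that the two components of $BM_1$ together with $\tr B$ give a $3\times 3$ linear system with determinant $a^2+c^2 = \abs{M_1}^2 \ne 0$, and read off the bound from Cramer's rule plus the elementary inequalities $\frac{\abs{a}+\abs{c}}{a^2+c^2}\le \frac{\sqrt2}{\abs{M_1}}$ and $\abs{\det M}\le 2\abs{M}\abs{M_1}$. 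The two arguments rest on the same underlying fact — that $(BM_1, \tr B)$ determines a symmetric $B$, with conditioning governed by $a^2+c^2$ — but yours makes that fact explicit and arguably demystifies what the paper itself calls a ``seemingly unmotivated'' lemma, at the cost of an explicit componentwise computation; the paper's version is more compact and stays in coordinate-free matrix notation. One cosmetic point, shared with the paper: the final substitution $\abs{M_1}^{-1}\le 2\abs{M}/\det M$ tacitly uses $\det M>0$ (otherwise one should write $\abs{\det M}$), which is harmless since the lemma is only ever applied where $\det M = \abs{Y_0}^2\circ\eta_\eps^{-1} \ge c^2>0$.
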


\begin{lemma}\label{L:SerfatiLemma2}
    Let $L \colon \R^2\times \R^2 \to \R$ be an integral kernel for which
    \begin{align*}
        \norm{L}_*
            := \sup_{x, y \in \R^2}
            \bigset{\abs{x - y}^2 \abs{L(x, y)}
                + \abs{x - y}^{3} \abs{\grad_x L(x, y)}}
        < \infty   
    \end{align*}
    and for which
    \begin{align}\label{e:LPVinL1}
        \abs{\PV \int_{\R^2} L(x, y) \, dy}
            < \iny
            \text{ for all } x \in \R^2.
    \end{align}
    Let $L[f]$ be as in \cref{D:PV}.
    Then
    \begin{align}\label{e:KernelEstimate}
        \begin{split}
        \norm{L[f-f(x)](x)}_{\dot{C}_x^\alpha}
            &= \norm{\PV \int_{\R^2} L(x, y)
                \brac{f(y) - f(x)} \, dy}_{\dot{C}_x^\alpha} \\
            &\le C \al^{-1} (1 - \al)^{-1}
                \norm{L}_* \norm{f}_{\dot{C}^\alpha}.
        \end{split}
    \end{align}
    If 
    \begin{align}\label{e:LHomo}
        \PV \int_{\R^2} L(\cdot, y) \, dy \equiv 0
    \end{align}
    then
    \begin{align}\label{e:KernelEstimateHomo}
        \norm{L[f]}_{\dot{C}^\alpha}
            \le C \al^{-1} (1 - \al)^{-1}
                \norm{L}_* \norm{f}_{\dot{C}^\alpha}.
    \end{align}
\end{lemma}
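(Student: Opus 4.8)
The plan is to prove the first assertion, \cref{e:KernelEstimate}, by the standard near/far decomposition for singular integral operators, and then to deduce \cref{e:KernelEstimateHomo} from it. Write $g(x) := \PV \int_{\R^2} L(x,y)[f(y) - f(x)]\,dy$ (which I take to be well-defined and finite under the hypotheses, in particular \cref{e:LPVinL1}), fix $x_1 \ne x_2$, and set $d := \abs{x_1 - x_2}$. The goal is the pointwise bound $\abs{g(x_1) - g(x_2)} \le C \al^{-1}(1-\al)^{-1} \norm{L}_* \norm{f}_{\dot{C}^\al}\, d^\al$, after which taking the supremum over $x_1 \ne x_2$ yields \cref{e:KernelEstimate}. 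I would split $\R^2$ into the near region $B := B(x_1, Kd)$, for a fixed absolute constant $K \ge 2$, and the far region $B^C$, and estimate the contribution of each region to $g(x_1) - g(x_2)$ separately.

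In the near region I would bound the two integrals $\int_B L(x_i, y)[f(y) - f(x_i)]\,dy$ individually in absolute value. Using $\abs{L(x_i,y)} \le \norm{L}_* \abs{x_i - y}^{-2}$ and $\abs{f(y) - f(x_i)} \le \norm{f}_{\dot{C}^\al} \abs{y - x_i}^\al$, the integrand is dominated by $\norm{L}_* \norm{f}_{\dot{C}^\al} \abs{y - x_i}^{\al - 2}$; integrating in polar coordinates over a ball of radius comparable to $d$ gives $\int_0^{Cd} r^{\al - 1}\,dr = C d^\al/\al$, which is the source of the factor $\al^{-1}$. For the far region I would combine the two integrals via the algebraic identity
\[
    L(x_1,y)[f(y)-f(x_1)] - L(x_2,y)[f(y)-f(x_2)]
        = [L(x_1,y)-L(x_2,y)][f(y)-f(x_1)] + [f(x_2)-f(x_1)]\, L(x_2,y).
\]
For the first term, the mean value theorem together with the gradient bound $\abs{\grad_x L} \le \norm{L}_* \abs{x-y}^{-3}$ (the segment $[x_1, x_2]$ stays at distance $\gtrsim \abs{y - x_1}$ from $y$ since $y \in B^C$) gives $\abs{L(x_1,y) - L(x_2,y)} \le C \norm{L}_* d\, \abs{y - x_1}^{-3}$; multiplying by $\abs{f(y) - f(x_1)} \le \norm{f}_{\dot{C}^\al} \abs{y - x_1}^\al$ and integrating $d\,\abs{y - x_1}^{\al - 3}$ over $\set{\abs{y - x_1} > Kd}$ yields $C d\,(Kd)^{\al - 1}/(1 - \al) = C d^\al/(1-\al)$, the source of $(1-\al)^{-1}$. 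Since $\al^{-1} + (1-\al)^{-1} = [\al(1-\al)]^{-1}$, these two contributions assemble into the claimed constant.

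The remaining far-region term, $[f(x_2)-f(x_1)]\int_{B^C} L(x_2,y)\,dy$, is where the real difficulty lies, and is the step I expect to be the main obstacle. The factor $\abs{f(x_2)-f(x_1)} \le \norm{f}_{\dot{C}^\al} d^\al$ is harmless, so everything rests on controlling the truncated integral $\int_{\set{\abs{y-x_1}>Kd}} L(x_2,y)\,dy$ by $C\norm{L}_*$. Absolute integrability fails here, since $L$ decays only like $\abs{x_2 - y}^{-2}$, which is borderline non-integrable at infinity; this is exactly where \cref{e:LPVinL1} is essential, the integral converging only in the principal-value sense. I would first replace the truncation region $\set{\abs{y-x_1}>Kd}$ by $\set{\abs{y-x_2}>Kd}$, the discrepancy being an integral over a thin annular region of width $\sim d$ at distance $\sim Kd$ from $x_2$, which the size estimate alone bounds by $C\norm{L}_*$. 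The resulting symmetric truncation must then be shown finite and suitably controlled through the principal-value hypothesis; this is the delicate point, since for a kernel satisfying only the stated size and gradient bounds the truncated integrals can drift logarithmically in the truncation radius, and it is precisely \cref{e:LPVinL1} that tames the behavior at infinity. (In the applications the kernels will satisfy the stronger cancellation \cref{e:LHomo}, for which this term simply vanishes.)

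Finally, the second assertion \cref{e:KernelEstimateHomo} follows at once: under \cref{e:LHomo} one has $\PV \int L(x,y) f(x)\,dy = f(x)\, \PV \int L(x,y)\,dy = 0$, so $L[f](x) = L[f - f(x)](x) = g(x)$, and \cref{e:KernelEstimate} applies verbatim.
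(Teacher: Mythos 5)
Your proposal follows essentially the same route as the paper's proof: a near/far decomposition at the scale $d=\abs{x_1-x_2}$, with the two near-field integrals bounded separately (the source of $\al^{-1}$), the far field split by the same algebraic identity into a mean-value-theorem term (the source of $(1-\al)^{-1}$) and a term proportional to a truncated integral of $L$, and \cref{e:KernelEstimateHomo} deduced from \cref{e:KernelEstimate} by observing that $L[f]=L[f-f(x)]$ under \cref{e:LHomo}. The one step you flag as the real obstacle --- controlling $\int_{\abs{y-x}>2h}L(x,y)\,dy$ by $C\norm{L}_*$ --- is in fact the point the paper also treats loosely: its bound on the term it calls $\text{III}_1$ passes through $\int_{\abs{x-z}>2h}\abs{x-z}^{\al-2}\,dz$, which diverges at infinity as written, so \cref{e:LPVinL1} is really being invoked only qualitatively there; in every application of the lemma either \cref{e:LHomo} holds (the kernel $L_2$), making this term vanish exactly as you note, or the kernel is absolutely integrable away from the diagonal, so nothing is lost.
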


The inequality in \cref{e:KernelEstimateHomo} is a classical result relating a Dini modulus of continuity of $f$ to a singular integral operator applied to $f$ in the special case where the modulus of continuity is $r \mapsto C r^\al$. (See, for instance, the lemma in \cite{KNV2007}, and note that applying that lemma to a $C^\al$ function gives the same factor of $\al^{-1}(1 - \al)^{-1}$ that appears in \cref{L:SerfatiLemma2}. This reflects the fact that the integral transform in \cref{e:KernelEstimate} applied to a $C^1$-function gives only a log-Lipschitz function, and applied to a $C^0$-function yields no modulus of continuity.)

\cref{L:SerfatiLemma2Inf} allows us to bound the full $C^\al$ norm.

\begin{lemma}\label{L:SerfatiLemma2Inf}
    Let $L$ be as in \cref{L:SerfatiLemma2} and suppose further that
    \begin{align*}
        \norm{L}_{**}
            := \norm{L}_*
                + \sup_{x \in \R^2} \norm{L(x, \cdot)}_{L^1(B_1(x)^C)}
            < \iny.
    \end{align*}
    Then the conclusions of \cref{L:SerfatiLemma2} hold with each $\dot{C}^\al$
    replaced by $C^\al$ and $\norm{L}_*$ replaced by $\norm{L}_{**}$.
    \Ignore{ 
    Moreover, $\norm{L[f]}_{L^\iny}$ is bounded as in \cref{e:KernelEstimate}
    or \cref{e:KernelEstimateHomo}, but with the factor of $\al^{-1}$ in place
    of $\al^{-1} (1 - \al)^{-1}$.
    } 
\end{lemma}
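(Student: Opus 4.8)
The plan is to leverage \cref{L:SerfatiLemma2} directly. Since that lemma already supplies the homogeneous $\dot{C}^\al$ estimates for $L[f - f(x)](x)$ and, under \cref{e:LHomo}, for $L[f]$, and since $\norm{\cdot}_{C^\al} = \norm{\cdot}_{L^\iny} + \norm{\cdot}_{\dot{C}^\al}$, the only thing left to establish is the matching $L^\iny$ bound. Moreover, when \cref{e:LHomo} holds we may write $L[f](x) = L[f - f(x)](x) + f(x) \, \PV \int_{\R^2} L(x, y) \, dy = L[f - f(x)](x)$, so in both the general and the mean-zero cases it suffices to prove
\[
    \norm{L[f - f(x)](x)}_{L^\iny_x}
        \le C \al^{-1} \norm{L}_{**} \norm{f}_{C^\al}.
\]
Adding this to the seminorm bound from \cref{L:SerfatiLemma2} then yields the full $C^\al$ estimate with $\norm{L}_*$ replaced by $\norm{L}_{**}$.

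To prove the displayed $L^\iny$ bound, I would fix $x$ and split $L[f - f(x)](x)$ into its contributions over $B_1(x)$ and over $B_1(x)^C$. On $B_1(x)$ the integrand is absolutely integrable: from the definition of $\norm{L}_*$ and the H\"older seminorm, $\abs{L(x, y) \brac{f(y) - f(x)}} \le \norm{L}_* \norm{f}_{\dot{C}^\al} \abs{x - y}^{\al - 2}$, and since $\al - 2 > -2$ this is integrable near $x$. Consequently the principal value over this region coincides with the ordinary integral and is controlled by $\norm{L}_* \norm{f}_{\dot{C}^\al} \int_{B_1(x)} \abs{x - y}^{\al - 2} \, dy = 2 \pi \al^{-1} \norm{L}_* \norm{f}_{\dot{C}^\al}$.

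On $B_1(x)^C$ there is no singularity, so I would instead use the crude bound $\abs{f(y) - f(x)} \le 2 \norm{f}_{L^\iny}$ together with the newly assumed control, giving a contribution of at most $2 \norm{f}_{L^\iny} \norm{L(x, \cdot)}_{L^1(B_1(x)^C)} \le 2 \norm{f}_{L^\iny} \norm{L}_{**}$. Summing the two pieces produces $\abs{L[f - f(x)](x)} \le C \al^{-1} \norm{L}_{**} \norm{f}_{C^\al}$ uniformly in $x$, which is what is needed.

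I do not expect a genuine obstacle: the estimate is a routine near/far decomposition. The one point to get right is that the extra hypothesis --- the uniform $L^1$ control of $L(x, \cdot)$ away from the diagonal --- is precisely what absorbs the far field against $\norm{f}_{L^\iny}$, since the pointwise decay $\abs{L} \lesssim \abs{x - y}^{-2}$ alone is not integrable at infinity in $\R^2$. It is also worth observing that the $L^\iny$ bound carries only the factor $\al^{-1}$, arising from the near-field integral $\int_0^1 r^{\al - 1} \, dr$, and not $(1 - \al)^{-1}$; this matches the sharper (commented-out) assertion and a fortiori fits under the stated constant $C \al^{-1} (1 - \al)^{-1}$.
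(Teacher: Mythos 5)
Your argument is correct and is essentially identical to the paper's proof: both reduce the claim to an $L^\iny$ bound on $L[f-f(x)](x)$, split the integral into the near region $B_1(x)$ (controlled by $\norm{L}_* \norm{f}_{\dot{C}^\al}\int_{B_1}\abs{x-z}^{\al-2}\,dz \le C\al^{-1}\norm{L}_*\norm{f}_{\dot{C}^\al}$) and the far region $B_1(x)^C$ (controlled by $2\norm{f}_{L^\iny}\sup_x\norm{L(x,\cdot)}_{L^1(B_1(x)^C)}$), and then add the seminorm estimate from \cref{L:SerfatiLemma2}. Your observation that the $L^\iny$ piece only costs a factor of $\al^{-1}$ also matches the paper's (commented-out) sharper remark.
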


We shall apply \cref{L:SerfatiLemma2} to the kernel $L_2$ of \cref{L:SerfatiKernels} and apply \cref{L:SerfatiLemma2Inf} to the kernels $L_1$, $L_3$, and $L_4$.
Note that for $L_2$, $L_3$, and $L_4$, we are actually applying \cref{L:SerfatiLemma2} to each of their components.

\begin{lemma}\label{L:SerfatiKernels}
    Consider the four kernels,
    \begin{enumerate}
        \item
            $L_1(x, y) = \rho_\eps(x - y) \omega_0(y)$;
            
        \item
            $L_2(x,y) = \grad (a_r K)(x - y)$ for some fixed $r > 0$;

        \item
            $L_3(x, y) = \grad K(x - y) \omega(y)$, where $\omega \in C_C^\iny(\R^2)$;
            
        \item  
            $L_4(x, y) = \rho_\eps(x - y) \grad u_0(y)$.
                \end{enumerate}
    Here, $K$ is the Biot-Savart kernel of \cref{e:BS}.
    Then
    $\norm{L_1}_{**} \le C \norm{\omega_0}_{L^\iny}$ for $C$
    independent of $\eps$; $L_2$ satisfies \cref{e:LHomo} with
    $\norm{L_2}_* \le C$ independently of $r$;
    $\norm{L_3}_{**} \le C V(\omega)$ with
    \begin{align}\label{e:Vomega}
        V(\omega)
            :=
            \norm{\omega}_{L^\iny}
                + \norm{\PV \int \grad K(x - y) \omega(y) \, dy}_{L^\iny};
    \end{align}
    and $\norm{L_4}_{**} \le C \norm{\grad u_0}_{L^\iny}$ for $C$
    independent of $\eps$.
\end{lemma}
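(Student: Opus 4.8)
The plan is to handle all four kernels by the same two-step scheme: first bound the pointwise quantity $\norm{L}_*$ using scale-invariance and the homogeneity of the underlying kernels, and then, for the three kernels to which \cref{L:SerfatiLemma2Inf} applies, control the far-field tail $\sup_x\norm{L(x,\cdot)}_{L^1(B_1(x)^C)}$; for $L_2$ the additional task is the cancellation condition \cref{e:LHomo}. Throughout I would use that $K$ is homogeneous of degree $-1$, so $\abs{\grad K(z)}\le C\abs{z}^{-2}$ and $\abs{\grad^2 K(z)}\le C\abs{z}^{-3}$, and that the angular part of $\grad K$ has mean zero over circles centered at the origin (equivalently $\dv K=0$ and $\grad K$ is symmetric and trace-free away from $0$), so that $\PV\int_{\abs{z}<\rho}\grad K(z)\,dz=0$ for every $\rho>0$.

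For the mollifier kernels $L_1$ and $L_4$, write $L(x,y)=\rho_\eps(x-y)g(y)$ with $g=\omega_0$ or $g=\grad u_0$. The substitution $w=(x-y)/\eps$ shows that $\sup_z\abs{z}^2\rho_\eps(z)$ and $\sup_z\abs{z}^3\abs{\grad\rho_\eps(z)}$ equal $\sup_w\abs{w}^2\rho(w)$ and $\sup_w\abs{w}^3\abs{\grad\rho(w)}$, both finite and independent of $\eps$ since $\rho\in C_C^\iny$; multiplying by $\norm{g}_{L^\iny}$ gives $\norm{L}_*\le C\norm{g}_{L^\iny}$. Since $\norm{\rho_\eps}_{L^1}=1$, the tail is $\le\norm{g}_{L^\iny}$ as well, again uniformly in $\eps$. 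This yields the claimed bounds on $\norm{L_1}_{**}$ and $\norm{L_4}_{**}$.

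For $L_3=\grad K(x-y)\omega(y)$ the size bound is immediate from $\abs{\grad K(z)}\le C\abs{z}^{-2}$ and $\abs{\grad^2 K(z)}\le C\abs{z}^{-3}$, giving $\norm{L_3}_*\le C\norm{\omega}_{L^\iny}\le CV(\omega)$. The tail $\int_{\abs{x-y}\ge1}\abs{\grad K(x-y)}\,\abs{\omega(y)}\,dy$ is the delicate part: I would exploit the decay $\abs{\grad K}\le C\abs{z}^{-2}$ together with the mass of $\omega$, and write the far field as the full principal-value transform minus its near part, using the cancellation $\PV\int_{\abs{z}<1}\grad K(z)\,dz=0$ to rewrite the near part with the increment $\omega(y)-\omega(x)$, so that the far field is tied to the quantity $\norm{\PV\int\grad K(x-y)\omega(y)\,dy}_{L^\iny}$ appearing in $V(\omega)$ in \cref{e:Vomega}. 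Matching the honest $L^1$ tail to $V(\omega)$, rather than merely to a crude mass bound, is the point here that requires the most care.

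Finally, $L_2=\grad(a_rK)(x-y)$. By the product rule $\grad(a_rK)=a_r\grad K+K\otimes\grad a_r$. On $B_r$ one has $a_r\equiv1$, so $L_2=\grad K$ with the correct $\abs{z}^{-2}$ (and $\abs{z}^{-3}$ for $\grad_x L_2$) behavior; on the annulus $r\le\abs{z}\le2r$ both $\abs{\grad a_r}$ and $\abs{K}$ are $\le Cr^{-1}$ while $\abs{\grad K}\le Cr^{-2}$, so every term is $\le Cr^{-2}\le C\abs{z}^{-2}$ (and one order worse for the $x$-derivative). Scale-invariance then gives $\norm{L_2}_*\le C$ independently of $r$. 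I expect the main obstacle to be \cref{e:LHomo}: here I would compute $\PV\int\grad(a_rK)(z)\,dz$ by splitting into $\abs{z}<r$ and the annulus. On $\abs{z}<r$ the integrand is exactly $\grad K$, whose mean-zero angular part makes that contribution vanish identically; the annulus contributes only through the cutoff term $K\otimes\grad a_r$, and one verifies that its net contribution vanishes for the symmetric, trace-free (strain) structure of $\grad K$. Carrying out this angular-average computation — and in particular correctly accounting for the cutoff boundary term as $r\to$ the transition region — is the heart of the lemma and the step most likely to require care.
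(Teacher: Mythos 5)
Your handling of $L_1$ and $L_4$ (the scaling $\sup_z\abs{z}^2\rho_\eps(z)=\sup_w\abs{w}^2\rho(w)$, etc., plus $\norm{\rho_\eps}_{L^1}=1$ for the tail) and your bounds on $\norm{L_2}_*$ and $\norm{L_3}_*$ are correct, and they are exactly the "key points" that the paper's own two-sentence proof invokes. The two steps you yourself flag as delicate, however, are precisely where your argument does not close — and in both cases the underlying difficulty is that the lemma as stated is slightly over-claimed.

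First, \cref{e:LHomo} for $L_2$. The ball $B_r$ contributes nothing, as you say, since each entry of $\grad K$ is homogeneous of degree $-2$ with zero angular mean. But the cutoff term on the annulus does \emph{not} cancel. Its $(i,j)$ entry is $(\partial_j a_r)K^i=\frac{a'(\abs{z}/r)}{2\pi r}\,\frac{z_j (z^\perp)_i}{\abs{z}^{3}}$, and the angular average of $z_j(z^\perp)_i/\abs{z}^2$ is $\pm\tfrac12$ on the off-diagonal; the radial integral then gives $\PV\int \grad(a_rK)(z)\,dz=-\tfrac12\left(\begin{smallmatrix}0&-1\\1&0\end{smallmatrix}\right)$, independent of $r$. (Sanity check: for $\omega\equiv 1$ near $x$ one has $a_rK*\omega\equiv 0$ by oddness of $K$, yet the antisymmetric boundary term $\tfrac{\omega}{2}\left(\begin{smallmatrix}0&-1\\1&0\end{smallmatrix}\right)$ of \cref{L:graduExp} is present, so the PV integral must equal its negative.) Thus \cref{e:LHomo} holds only for the diagonal components of $L_2$. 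This is harmless where $L_2$ is used — writing $L_2[f]=L_2[f-f(x)](x)+f(x)\,c_0$ with $c_0$ a constant matrix still yields the conclusion of \cref{e:KernelEstimateHomo} with an adjusted constant, and the extra term is controlled in the one place $L_2$ appears — but your asserted vanishing "by the symmetric, trace-free structure of $\grad K$" is false, and the computation you defer is exactly where the nonzero constant appears.

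Second, the tail of $L_3$. What $\norm{L_3}_{**}$ requires is $\sup_x\int_{\abs{x-y}\ge 1}\abs{\grad K(x-y)}\,\abs{\omega(y)}\,dy$, with absolute values inside the integral; no rewriting in terms of the principal-value transform can recover this from $\norm{\PV\int\grad K(\cdot-y)\omega(y)\,dy}_{L^\iny}$, because the latter enjoys cancellation that the former forbids (a smoothed $\CharFunc_{B_n(0)}$ has $V$ bounded uniformly in $n$ while this tail grows like $\log n$). The honest estimate is simply $\sup_{\abs{z}\ge1}\abs{\grad K(z)}\cdot\norm{\omega}_{L^1}\le C\norm{\omega}_{L^1}$, so the bound should read $\norm{L_3}_{**}\le C\bigl(V(\omega)+\norm{\omega}_{L^1}\bigr)$; the $L^1$ norm cannot be dropped in favor of $V(\omega)$. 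Again this is immaterial in every application, since $\norm{\omega_\eps(t)}_{L^1}\le\norm{\omega_0}_{L^1}$ is conserved and absorbed into $C(\omega_0,Y_0)$, but the route you propose — tying the absolute-value tail to the PV integral — cannot work, and you should instead use $\omega\in L^1$ directly (and record the extra term in the constant).
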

\begin{proof}
    The bounds on the $*$-norms of $L_1$, $L_2$, and $L_4$ are easily verified, the
    key points being their $L^1$-bound uniform in $x$,
    the decay of $K(x - y)$ and $\grad_x K(x - y)$,
    and the scaling of $\rho_\eps(x - y)$ and $\grad_x \rho_\eps(x - y)$ in terms of $\eps$.
    For $L_3$, $\norm{L_3}_*$
    is bounded as for $L_2$, with the $\PV$ integral in
    \cref{e:Vomega} coming from the final term in $\norm{L}_{**}$ .
\end{proof}

\begin{lemma}\label{L:EquivalentConditions}
    Let $\alpha \in (0,1)$ and $Z\in L^{\infty}(\R^2)$.
    Then, $\dv Z \in C^{\alpha-1}(\R^2)$ if and only if
    $\grad \mathcal{F} \ast \dv Z \in C^\alpha(\R^2)$ (equivalently,
    $K * \dv Z \in C^\al(\R^2)$).
    Moreover,
    \begin{align}\label{e:EquivalentConditionsBound}
        \norm{\dv Z}_{C^{\al - 1}}
            \le \norm{\grad \Cal{F} * \dv Z}_{C^\al}
            \le C \pr{\norm{Z}_{L^\iny} + \norm{\dv Z}_{C^{\al - 1}}}.
    \end{align}
\end{lemma}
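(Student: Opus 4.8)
The plan is to reduce to a single kernel, dispatch the reverse inequality by an elementary distributional identity, and then prove the forward inequality by splitting $\grad\Cal{F}$ into a near and a far piece, using the two hypotheses $\dv Z\in C^{\al-1}$ and $Z\in L^\iny$ in complementary regions. First, since $K=\grad^\perp\Cal{F}$, the two vectors $K*\dv Z$ and $\grad\Cal{F}*\dv Z$ have the same scalar entries $\prt_i\Cal{F}*\dv Z$ up to sign and reordering, so the ``equivalently'' clause is immediate and it suffices to treat $g:=\grad\Cal{F}*\dv Z$. For the first (reverse) inequality, suppose $g\in C^\al$. Because $\Delta\Cal{F}=\delta$, one has distributionally $\dv g=(\dv\grad\Cal{F})*\dv Z=\Delta\Cal{F}*\dv Z=\dv Z$, so $\dv Z=\dv g$ with $g\in C^\al$. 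Taking $f=0$ and $v=g$ in the definition of the negative \Holder space (\cref{D:HolderSpaces}) immediately gives $\dv Z\in C^{\al-1}$ and $\norm{\dv Z}_{C^{\al-1}}\le\norm{g}_{C^\al}$.

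For the second (forward) inequality I would fix the cutoff scale $r=1$ and write $\grad\Cal{F}=a_r\grad\Cal{F}+(1-a_r)\grad\Cal{F}$ using the radial cutoff of \cref{D:Radial}, so that $g=g_{\mathrm{near}}+g_{\mathrm{far}}$. On the near piece I would use $\dv Z\in C^{\al-1}$: writing $\dv Z=f+\dv v$ with $f,v\in C^\al$ nearly realizing $\norm{\dv Z}_{C^{\al-1}}$, the kernel $a_r\grad\Cal{F}$ has an integrable ($\abs{x}^{-1}$) singularity and compact support, hence lies in $L^1$, so $(a_r\grad\Cal{F})*f$ is bounded in $C^\al$ by $\norm{f}_{C^\al}$. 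After moving the derivative in $(a_r\grad\Cal{F})*\dv v$ one obtains a compactly supported bounded ($L^1$) kernel against $v$ together with the local Calder\'on--Zygmund kernel $a_r\prt_i\prt_j\Cal{F}$ against $v$. The latter is of the type of $L_2$ in \cref{L:SerfatiKernels} (it satisfies \cref{e:LHomo} and has finite $**$--norm thanks to its compact support), so \cref{L:SerfatiLemma2Inf} controls it in $C^\al$ by $\norm{v}_{C^\al}$. Altogether $\norm{g_{\mathrm{near}}}_{C^\al}\le C_\al\norm{\dv Z}_{C^{\al-1}}$.

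On the far piece I would instead use $Z\in L^\iny$: moving the divergence onto the kernel gives $g_{\mathrm{far}}^i=\{(1-a_r)\prt_i\prt_j\Cal{F}-(\prt_j a_r)\prt_i\Cal{F}\}*Z^j$, acting directly on $Z$. The annulus term $(\prt_j a_r)\prt_i\Cal{F}$ is a compactly supported bounded kernel, giving a $C^\al$ contribution $\le C\norm{Z}_{L^\iny}$. The main term has kernel $N_{ij}:=(1-a_r)\prt_i\prt_j\Cal{F}$, which is smooth, supported in $\set{\abs{x}>r}$, with $\abs{N_{ij}}\lesssim\abs{x}^{-2}$ and $\abs{\grad N_{ij}}\lesssim\abs{x}^{-3}$. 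Its \Holder seminorm is controlled by $\norm{Z}_{L^\iny}$ through the standard kernel-difference estimate: for nearby points the integrable decay $\abs{\grad N_{ij}}\lesssim\abs{x}^{-3}$ gives even a Lipschitz bound, while for distant points one picks up only a logarithmic factor, which is dominated by $\abs{x-y}^\al$.

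The hard part will be the $L^\iny$ size of the far piece rather than its seminorm. Acting on a merely bounded $Z$, the far kernel $N_{ij}$ is not absolutely integrable (the tail $\int_{\abs{x}>r}\abs{x}^{-2}\,dx$ diverges logarithmically), so $N_{ij}*Z^j$ is a priori defined only up to an additive constant that the seminorm cannot detect; it is precisely this constant that must be controlled by $\norm{Z}_{L^\iny}$, and this is where the hypothesis $Z\in L^\iny$ (beyond $\dv Z\in C^{\al-1}$) is indispensable. I would resolve the well-definedness and pin the constant by first proving the full estimate for a regularization $Z_\eps=\rho_\eps*Z$ (for which $g_\eps$ is a genuine function) with constants independent of $\eps$, and then passing to the limit. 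Combining the near and far bounds yields $\norm{g}_{C^\al}\le C(\norm{Z}_{L^\iny}+\norm{\dv Z}_{C^{\al-1}})$, which together with the reverse inequality establishes both the equivalence and \cref{e:EquivalentConditionsBound}.
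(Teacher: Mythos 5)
Your reverse inequality is exactly the paper's: both use $\Delta \Cal{F} = \delta$ to write $\dv Z = \dv(\grad\Cal{F}*\dv Z)$ and then invoke the definition of $C^{\al-1}$. For the forward inequality you take a genuinely different route: a physical-space near/far splitting of the kernel $\grad\Cal{F}$, versus the paper's Fourier-side argument, which writes $\grad\Cal{F}*\dv Z = m(D)\dv Z = n_i(D)Z^i$ with $m(\xi)=\xi/\abs{\xi}^2$, $n_i(\xi)=\xi^i\xi/\abs{\xi}^2$, and runs a nonhomogeneous Littlewood--Paley decomposition: the blocks $\Delta_j$ with $j\ge 0$ are controlled by $\norm{\dv Z}_{C^{\al-1}}$ via the degree $-1$ homogeneity of $m$ and the Besov characterization of $C^\al$, while the single low-frequency block $\Delta_{-1}n_i(D)Z^i$ is controlled by $\norm{Z}_{L^\iny}$ through a fixed kernel. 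Your near piece is sound: $a_1\grad\Cal{F}\in L^1$ handles the $f$ part of a decomposition $\dv Z=f+\dv v$, and $a_1\,\prt_i\prt_j\Cal{F}$ is radial-cutoff times a mean-zero homogeneous degree $-2$ kernel, so it satisfies \cref{e:LHomo} and \cref{L:SerfatiLemma2Inf} applies to the $v$ part.

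The gap is in the far piece, and your proposed repair does not close it. The kernel $N_{ij}=(1-a_1)\prt_i\prt_j\Cal{F}$ decays only like $\abs{x}^{-2}$, which is not integrable in $\R^2$, and for a merely bounded, non-decaying $Z$ the integral $\int N_{ij}(x-y)Z^j(y)\,dy$ has no absolute convergence and no usable cancellation: the angular mean-zero property of $\prt_i\prt_j\Cal{F}$ only kills the contribution of the constant $Z^j(x)$, and the remainder $\int N_{ij}(x-y)\pr{Z^j(y)-Z^j(x)}dy$ gains nothing when $Z$ is only $L^\iny$. Mollifying, $Z_\eps=\rho_\eps*Z$, does not make $g_\eps$ ``a genuine function'': mollification improves smoothness, not decay at infinity, so the same divergence persists for every $\eps>0$; and even granting a definition, the uniform-in-$\eps$ bound you would need is precisely the $L^\iny\to L^\iny$ bound for the truncated double Riesz transform, which is the statement in question, not a tool. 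This $L^\iny$ control of the zero-frequency/far-field contribution is the crux of the lemma --- it is where the hypothesis $Z\in L^\iny$ must actually enter --- and it is exactly the difficulty the paper isolates into the block $\Delta_{-1}n_i(D)Z^i$, where the relevant kernel is a single fixed function independent of $Z$ and the estimate reduces to its $L^1$ norm. To salvage your approach you would need either to import that Fourier-side treatment of the low frequencies, or to assume additional decay on $Z$ (e.g. $Z\in L^1\cap L^\iny$, which holds in all of the paper's applications where $Z=\omega_0Y_0$), or to work modulo constants and then pin the constant by some other means; as written, the far-field $L^\iny$ estimate is asserted rather than proved.
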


\begin{lemma}[Gronwall's lemma and reverse Gronwall's lemma]\label{L:Gronwall}
    Suppose $h \ge 0$ is a continuous nondecreasing or nonincreasing function on $[0,T]$,
    $g \ge 0$ is an
    integrable function on $[0,T]$, and
    \begin{align*}
        f(t)
            \le h(t) + \int_0^t g(s) f(s) \, ds
                \ \text{ or } \ 
        f(t)
            \ge h(t) - \int_0^t g(s) f(s) \, ds     
    \end{align*}
    for all $t \in [0, T]$. Then
    \begin{align*}
        f(t) \le h(t) \exp \int_0^t g(s) \, ds
            \ \text{ or } \ 
        f(t) \ge h(t) \exp \pr{-\int_0^t g(s) \, ds},
    \end{align*}
    respectively, for all $t \in [0, T]$.
\end{lemma}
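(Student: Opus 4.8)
The plan is to convert each integral inequality into a first-order linear differential inequality for the feedback quantity $R(t) := \int_0^t g(s) f(s)\,ds$ and then integrate it against the appropriate exponential factor, treating the forward ($\le$) and reverse ($\ge$) cases in parallel. Writing $G(t) := \int_0^t g(s)\,ds$, and noting that $g$ is only assumed integrable, $R$ is absolutely continuous with $R(0) = 0$ and $R'(t) = g(t) f(t)$ for a.e.\ $t$; all the steps below are to be read in this a.e./integrated sense.

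For the forward case, the hypothesis $f \le h + R$ together with $g \ge 0$ gives $R' = g f \le g h + g R$, that is, $R' - g R \le g h$, and multiplying by the integrating factor $e^{-G}$ makes the left-hand side exact: $\frac{d}{dt}\pr{e^{-G} R} \le g h\, e^{-G}$. Integrating from $0$ to $t$ (with $R(0)=0$) and using that $h$ is nondecreasing to pull $h(t)$ out from under the integral (so that $h(s) \le h(t)$), I would obtain
\begin{align*}
    e^{-G(t)} R(t)
        \le h(t) \int_0^t g(s)\, e^{-G(s)}\, ds
        = h(t)\pr{1 - e^{-G(t)}}.
\end{align*}
Hence $R(t) \le h(t)\pr{e^{G(t)} - 1}$, and substituting back into $f \le h + R$ yields $f(t) \le h(t)\,e^{G(t)}$, the forward bound.

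For the reverse case the computation is the mirror image: $f \ge h - R$ gives $R' + g R \ge g h$, hence $\frac{d}{dt}\pr{e^{G} R} \ge g h\, e^{G}$, and integrating while using that $h$ is now \emph{nonincreasing} (so $h(s) \ge h(t)$) controls $R$ from below. This reverse step is where I expect essentially all of the care to lie: unlike the forward case, feeding a bound on $R$ back into $f \ge h - R$ moves the inequality the wrong way, so one cannot close the estimate simply by iterating the integral inequality as before. The clean route is to keep track of the underlying comparison (differential) inequality — showing that the quantity being bounded below, multiplied by $e^{G}$, is monotone — and to use the nonincreasing monotonicity of $h$ precisely in order to convert the resulting control at $t = 0$ into the stated bound at time $t$. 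Matching each sign of the feedback term to the correct monotonicity of $h$ is the one piece of bookkeeping that must be gotten right; the forward direction is otherwise entirely routine.
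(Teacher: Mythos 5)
The paper never actually proves this lemma---it is set aside as ``classical'' in \cref{S:Lemmas} and does not appear among the proofs in \cref{A:ProofsOfLemmas}---so your proposal has to stand on its own. The forward half does: with $R(t) = \int_0^t g f$ and $G(t) = \int_0^t g$, the integrating-factor step $\frac{d}{dt}\pr{e^{-G}R} \le g h e^{-G}$ combined with $h(s) \le h(t)$ gives $R(t) \le h(t)\pr{e^{G(t)}-1}$ and hence $f \le h\,e^{G}$. That part is complete and correct.

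The reverse half is not a proof: you correctly observe that the obvious route fails, but the ``clean route'' you gesture at is never carried out, and it cannot be. Your own computation shows the obstruction---$\frac{d}{dt}\pr{e^{G}R} \ge g h e^{G}$ yields a \emph{lower} bound on $R$, whereas $f \ge h - R$ requires an \emph{upper} bound on $R$, and the hypothesis gives no upper bound on $f$ (hence none on $R' = g f$). In fact the reverse implication is false as a statement about the integral inequality alone: take $T=1$, $h \equiv 1$, $g \equiv 1$, $f = 2\,\CharFunc_{[0,1/2]}$. Then $f(t) \ge 1 - \int_0^t f(s)\,ds$ for every $t$ (the inequality reads $2 \ge 1-2t$ for $t \le 1/2$ and $0 \ge 0$ for $t > 1/2$), yet $f(1) = 0 < e^{-1} = h(1)e^{-G(1)}$; mollifying $f$ produces a continuous, nonnegative counterexample, so no regularity assumption on $f$ rescues the statement. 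What is true, and what the paper actually uses in \cref{S:etaetainv} and \cref{S:EstYR}, is the \emph{differential} form: if $f$ is absolutely continuous with $f' \ge -g f$ a.e.\ and $f(0) \ge h(0)$ with $h$ nonincreasing, then $e^{G}f$ is nondecreasing and $f(t) \ge f(0)e^{-G(t)} \ge h(t)e^{-G(t)}$. In both applications $f$ is the norm of a differentiable quantity satisfying $\abs{f'} \le g f$, so this hypothesis is available; your reverse argument should start from it rather than from the integral inequality.
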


\section{Approximate solutions and transport equations}\label{S:Transport}

\noindent We first regularize the initial data by setting $u_{0, \eps} = \rho_\eps * u_0$, where $\rho_\eps$ is the standard mollifier of \cref{D:Mollifier}, letting $\eps$ range over values in $(0, 1]$. It follows that $\omega_{0, \eps} = \rho_\eps * \omega_0$. Then there exists a solution, $\omega_\eps(t)\in C^\iny(\R^2)$, to the Euler equations \cref{e:Eomega} for all time with $C^\iny$ velocity field, $u_\eps$ (\cite{Leray1933, Wolibner1933} or see Theorem 4.2.4 of \cite{C1998}). These solutions converge to a solution $\omega(t)$ of \cref{e:Eomega}. (We say more about convergence in \cref{S:Convergence}.)

The flow map, $\eta_\eps$, is given in \cref{e:etaDef} with $u_\eps$ in place of $u$. Moreover, all the $L^p$-norms of $\omega_\eps$ are conserved over time with
\begin{align}\label{e:omegaNormp}
    \norm{\omega_\eps(t)}_{L^{p}}
        = \norm{\omega_{\eps, 0}}_{L^{p}}
        \le \norm{\omega_0}_{L^{p}}
        \le \norm{\omega_0}_{L^1 \cap L^\iny}
        =: \norm{\omega_0}_{L^1} + \norm{\omega_0}_{L^\iny}
\end{align}
for all $p \in [1, \iny]$. Also,
\begin{align}\label{e:uepsBound}
    \norm{u_\eps(t)}_{L^\iny} \le C \norm{\omega_0}_{L^1 \cap L^\iny}    
\end{align}
(see Proposition 8.2 of \cite{MB2002}) so $\norm{u_\eps}_{L^\iny(\R \times \R^2)}$ is uniformly bounded in $\eps$.
\OptionalDetails{
    To derive the last inequality in \cref{e:omegaNormp}, we start with the interpolation
    (or convexity) inequality for Lebesgue spaces,
    \begin{align*}
        \norm{f}_{L^p} \le \norm{f}_{L^s}^\theta \norm{f}_{L^r}^{1 - \theta},
    \end{align*}
    where $1 \le s \le p \le r \le \iny$ and
    \begin{align*}
        \frac{1}{p} = \frac{\theta}{s} + \frac{1 - \theta}{r}.
    \end{align*}
    (See, for instance, (II.2.7) of \cite{G1994}.)
    Choosing $s = 1$, $r = \iny$ gives $\theta = p^{-1}$. Hence,
    \begin{align*}
        \norm{f}_{L^p} 
            \le \norm{f}_{L^1}^{\frac{1}{p}}
                \norm{f}_{L^\iny}^{1 - \frac{1}{p}}
            = \norm{f}_{L^1}^{\frac{1}{p}}
                \norm{f}_{L^\iny}^{\frac{p - 1}{p}}.
    \end{align*}
    Applying Young's inequality, we have
    \begin{align*}
        \norm{f}_{L^p} 
            \le \frac{\pr{\norm{f}_{L^1}^{\frac{1}{p}}}^p}{p}
                + \frac{\pr{\norm{f}_{L^\iny}^{\frac{p - 1}{p}}}^\frac{p}{p - 1}}
                    {\frac{p}{p - 1}}.   
            = \frac{\norm{f}_{L^1}}{p}
                + \frac{\norm{f}_{L^\iny}}
                    {\frac{p}{p - 1}}
            \le \norm{f}_{L^1} + \norm{f}_{L^\iny}.  
    \end{align*}

    }

For the most of the proof we will use these smooth solutions, passing to the limit as $\eps \to 0$ in the final steps in \cref{S:Convergence}.

We let
\begin{align}\label{e:Yeps}
    Y_\eps(t, \eta_\eps(t, x)) = Y_0(x) \cdot \nabla \eta_\eps(t, x)
\end{align}
be the pushforward of $Y_0$ under the flow map $\eta_\eps$ (as in \cref{e:Y}). (Note the slight notational collision between $Y_\eps$ and $Y_0$ and $\omega_\eps$ and $\omega_0$; this should not, however, cause any confusion.)

Standard calculations show that
\begin{align}\label{e:YTransportAlmost}
    \prt_t Y_\eps + u_\eps \cdot \nabla Y_\eps
        = Y_\eps \cdot \nabla u_\eps
\end{align}
and that
\begin{align}\label{e:YTransportDiv}
    \begin{split}
        \prt_t \dv Y_\eps + u_\eps \cdot \grad \dv Y_\eps &= 0, \\
        \prt_t \dv (\omega_\eps Y_\eps)
            + u_\eps \cdot \grad \dv (\omega_\epsilon Y_\eps) &= 0,
    \end{split}
\end{align}
the latter equality using that the vorticity is transported by the flow map.
Hence,
\begin{align}\label{e:divomegaY}
    \begin{split}
		\dv Y_\eps(t, x) = \dv Y_0(\eta_\eps^{-1}(t, x)), \\
        \dv (\omega_\eps Y_\eps)(t, x) = \dv (\omega_{0, \eps} Y_0)(\eta_\eps^{-1}(t, x)).
    \end{split}
\end{align}

\begin{remark}\label{R:WeakTransport}
Actually, the transport equations in \cref{e:YTransportAlmost,e:YTransportDiv}, and others we will state later, are satisfied in a weak sense, since $Y_0$ and $\dv (\omega_{0, \eps} Y_0)$ only lie in $C^\al$. We refer to Definition 3.13 of \cite{BahouriCheminDanchin2011} for the notion of weak transport. With the exception of the use of Theorem 3.19 of \cite{BahouriCheminDanchin2011} in the proof of \cref{L:RegularitywY}, we will treat all transport equations as though they are satisfied in a strong sense, however, justifying such use in \cref{A:TransportEstimates}. (See also \cref{R:CannotMollifyY0}.)
\end{remark}

We can also write \cref{e:YTransportDiv} as
\begin{align}\label{e:TwoTransports}
    \begin{split}
        &\diff{}{t} Y_\eps(t, \eta_\eps(t, x))
            = (Y_\eps \cdot \nabla u_\eps)(t, \eta_\eps(t, x)), \\
        &\diff{}{t} (\omega_\eps Y_\eps)(t, \eta_\eps(t, x))
            = 0.
    \end{split}
\end{align}

Define the initial vector field
\begin{align}\label{e:R0Def}
    R_{0, \eps}
        = \omega_{0, \eps} Y_0
            + \rho_\eps * \nabla \mathcal{F} * \dv (\omega_0 Y_0)
            - \rho_\eps * \left(\omega_0 Y_0\right)
\end{align}
and observe that
\begin{align*}
    \dv R_{0, \eps}
        &= \dv (\omega_{0, \eps} Y_0)
            + \dv \bigpr{\rho_\eps * \pr{\nabla \mathcal{F}
            * \dv \left(\omega_{0} Y_0\right)
            - \omega_{0} Y_0}}
        = \dv (\omega_{0, \eps} Y_0),
\end{align*}
where we used that $\Delta \mathcal{F}$ is the Dirac delta function. Hence, pushing forward $R_{0, \eps}$ will have the same effect on $\dv (\omega_{0, \eps} Y_0)$ as does pushing forward $\omega_{0, \eps} Y_0$ itself; that is, letting
\begin{align*}
    R_\eps(t, \eta_\eps(t, x)) = R_{0, \eps}(x) \cdot \nabla \eta_\eps(t, x),
\end{align*}
we have
\begin{align*} 
    \prt_t R_\eps + u_\eps \cdot \nabla R_\eps
        = R_\eps \cdot \nabla u_\eps
\end{align*}
and
\begin{align}\label{e:wYREquiv}
    \dv  (\omega_\eps Y_\eps)(t, x)
        = \dv R_\eps(t, x)
        = \dv R_{0, \eps} \left(\eta_\eps^{-1}(t, x)\right)
        = \dv (\omega_{0, \eps} Y_0)(\eta_\eps^{-1}(t, x)).
\end{align}
\Ignore{ 
We note also that as in \cref{e:YTransportAlmost,e:divomegaY},
\begin{align*} 
    \begin{split}
        \prt_t R_\eps + u_\eps \cdot \nabla R_\eps
            &= R_\eps \cdot \nabla u_\eps, \\
        \prt_t \dv R_\eps + u_\eps \cdot \grad \dv R_\eps &= 0.
    \end{split}
\end{align*}
} 

Although $R_\eps$ and $\omega_\eps Y_\eps$ have the same divergence, we will see in the proof of \cref{T:Serfati} that $R_\eps$ is bounded in $C^\al$ uniformly in $\eps$ in (\ref{e:YRCalphaBound}), which is not true of $\omega_\eps Y_\eps$. We will take advantage of this fact in the estimate in 
(\ref{e:ekBound}).

\begin{lemma}\label{L:R0Decomp}
The vector field $R_{0, \eps}$, defined in \cref{e:R0Def}, is in $C^\alpha(\R^2)$, with
\begin{align*}
    \norm{R_{0, \eps}}_{C^\al} \le C_\al,
\end{align*}
uniformly over $\eps$ in $(0, 1]$, where $C_\al$ is as in \cref{e:Calpha}.
\end{lemma}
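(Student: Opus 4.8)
The plan is to split the field $R_{0, \eps}$ of \cref{e:R0Def} into two pieces, each controlled uniformly in $\eps$ by a different mechanism, and then invoke the triangle inequality. Write $R_{0, \eps} = g_\eps + P_\eps$, where
\begin{align*}
    g_\eps := \omega_{0, \eps} Y_0 - \rho_\eps * (\omega_0 Y_0), \qquad
    P_\eps := \rho_\eps * \bigpr{\grad \Cal{F} * \dv(\omega_0 Y_0)}.
\end{align*}
The key structural observation is that $g_\eps$ is a \emph{commutator}: since $\omega_{0, \eps} = \rho_\eps * \omega_0$, we have
\begin{align*}
    g_\eps(x)
        = \int \rho_\eps(x - y) \omega_0(y) \brac{Y_0(x) - Y_0(y)} \, dy,
\end{align*}
so the increment $Y_0(x) - Y_0(y)$ exposes the $C^\al$ regularity of $Y_0$, which will be the only source of Hölder control.

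For $P_\eps$, set $Z := \omega_0 Y_0$. By \cref{e:Assumption} we have $Z \in L^\iny$ and $\dv Z \in C^{\al - 1}$, so \cref{L:EquivalentConditions} gives $\grad \Cal{F} * \dv(\omega_0 Y_0) \in C^\al$ with $C^\al$-norm bounded by $C(\norm{Z}_{L^\iny} + \norm{\dv Z}_{C^{\al - 1}})$, a quantity of the form $C_\al$ as in \cref{e:Calpha} (the factor $\al^{-1}(1 - \al)^{-1}$ entering through the singular-integral estimate \cref{L:SerfatiLemma2} underlying \cref{L:EquivalentConditions}). Since convolution with the probability density $\rho_\eps$ of \cref{D:Mollifier} increases neither the $L^\iny$ nor the $\dot{C}^\al$ norm, we conclude $\norm{P_\eps}_{C^\al} \le C_\al$ uniformly in $\eps \in (0, 1]$.

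For $g_\eps$, the $L^\iny$ bound is immediate: on $\supp \rho_\eps(x - \cdot)$ we have $\abs{x - y} \le C \eps$, so $\abs{g_\eps(x)} \le \norm{\omega_0}_{L^\iny} \norm{Y_0}_{\dot{C}^\al} (C \eps)^\al \le C$ because $\eps \le 1$. For the seminorm I would estimate $g_\eps(x) - g_\eps(x')$, with $\delta := \abs{x - x'}$, in two regimes. If $\delta \ge \eps$, the $L^\iny$ bound alone gives $\abs{g_\eps(x) - g_\eps(x')} \le C \eps^\al \le C \delta^\al$. If $\delta < \eps$, I would write $Y_0(x) - Y_0(y) = \brac{Y_0(x) - Y_0(x')} + \brac{Y_0(x') - Y_0(y)}$ to obtain
\begin{align*}
    g_\eps(x) - g_\eps(x')
        = \brac{Y_0(x) - Y_0(x')} \omega_{0, \eps}(x)
            + \int \brac{\rho_\eps(x - y) - \rho_\eps(x' - y)} \omega_0(y)
                \brac{Y_0(x') - Y_0(y)} \, dy,
\end{align*}
bounding the first term by $\norm{Y_0}_{\dot{C}^\al} \norm{\omega_0}_{L^\iny} \delta^\al$, and the second by using $\abs{\rho_\eps(x - y) - \rho_\eps(x' - y)} \le \norm{\grad \rho_\eps}_{L^\iny} \delta \le C \eps^{-3} \delta$ over a support of measure $\le C \eps^2$ on which $\abs{Y_0(x') - Y_0(y)} \le \norm{Y_0}_{\dot{C}^\al} (C\eps)^\al$. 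This produces $C \norm{\omega_0}_{L^\iny} \norm{Y_0}_{\dot{C}^\al} \eps^{\al - 1} \delta \le C \delta^\al$, the last step using $\delta < \eps$ together with $1 - \al > 0$.

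The main obstacle is exactly this uniform $\dot{C}^\al$ bound on the commutator $g_\eps$: individually neither $\omega_{0, \eps} Y_0$ nor $\rho_\eps * (\omega_0 Y_0)$ admits an $\eps$-independent $\dot{C}^\al$ bound, since each mollifies the merely bounded function $\omega_0$, and it is only in their difference that the $C^\al$ regularity of $Y_0$ is brought to bear through the increment $Y_0(x) - Y_0(y)$. Combining the two regimes yields $\norm{g_\eps}_{\dot{C}^\al} \le C(\omega_0, Y_0)$, and together with the $L^\iny$ bound and the estimate on $P_\eps$ this gives $\norm{R_{0, \eps}}_{C^\al} \le C_\al$ uniformly in $\eps$, as claimed.
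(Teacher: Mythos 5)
Your proof is correct, and it uses exactly the paper's decomposition: the smooth part $\rho_\eps * \grad \Cal{F} * \dv(\omega_0 Y_0)$ handled via \cref{L:EquivalentConditions} together with the fact that mollification does not increase the $C^\al$ norm, plus the commutator $(\rho_\eps * \omega_0) Y_0 - \rho_\eps * (\omega_0 Y_0)$, whose integral representation exposing the increment $Y_0(x) - Y_0(y)$ is the same structural observation the paper makes. The one place you diverge is in how the commutator is estimated: the paper simply applies \cref{L:SerfatiLemma2Inf} with the kernel $L_1(x,y) = \rho_\eps(x-y)\omega_0(y)$ of \cref{L:SerfatiKernels}, whereas you re-derive the bound by hand with the two-regime argument ($\delta \ge \eps$ versus $\delta < \eps$, splitting $Y_0(x) - Y_0(y)$ through $Y_0(x')$). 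Your direct estimate is a correct, self-contained special case of that kernel lemma, and it even yields a constant $C(\omega_0, Y_0)$ free of the $\al^{-1}(1-\al)^{-1}$ factor that the general lemma produces (one minor slip: you attribute the $\al^{-1}(1-\al)^{-1}$ in $C_\al$ to \cref{L:EquivalentConditions}, but in the paper that factor enters through the commutator term via \cref{L:SerfatiLemma2Inf}, while \cref{L:EquivalentConditions} is proved by Littlewood--Paley arguments and contributes only a $C(\omega_0,Y_0)$; this does not affect the validity of the claimed bound). What the paper's route buys is brevity and reuse of machinery that is needed elsewhere in the argument anyway; what yours buys is independence from that machinery and a marginally sharper constant.
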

\begin{proof}
We rewrite $R_{0, \eps}$ in the form,
\begin{align*}
    R_{0, \eps}
        = \rho_\eps * \nabla \mathcal{F} * \dv(\omega_0 Y_0)
            +\bigbrac{(\rho_\eps * \omega_0)Y_0 - \rho_\eps * (\omega_0 Y_0)}.
\end{align*}
Since $\displaystyle \nabla \mathcal{F} * \dv (\omega_0 Y_0)\in C^\alpha(\R^2)$ by \cref{L:EquivalentConditions} (noting that $\omega_0 Y_0 \in L^\iny$), we have 
\begin{align}\label{e:R0InternalBound}
    \norm{\rho_\eps * \nabla \mathcal{F} * \dv(\omega_0 Y_0)}_{C^{\alpha}}
        \le C \norm{\nabla \mathcal{F} * \dv(\omega_0 Y_0)}_{C^{\alpha}}
        \le C(\omega_{0}, Y_{0}).
\end{align}
Since $Y_0 \in C^\alpha(\R^2)$, applying \cref{L:SerfatiLemma2Inf} with the kernel $L_1$ of \cref{L:SerfatiKernels}, we have
\begin{align}\label{e:rhoepsY0Est}
    \begin{split}
    &\norm{(\rho_\eps * \omega_0)Y_0 - \rho_\eps * (\omega_0 Y_0)}_{C^\al}
        = \norm{\int_{\R^2} \rho_\eps(x - y)\omega_0(y) \brac{Y_0(x)-Y_0(y)}
                \, dy}_{C^\al} \\
        &\qquad
        \le C(\omega_{0}, Y_{0}) \pr{\al^{-1} (1 - \al)^{-1}}
        = C_\al.
    \end{split}
\end{align}
This completes the proof. 
\end{proof}

Finally, we prove the propagation of regularity of $\dv (\omega_\eps Y_\eps)$.

\begin{lemma}\label{L:RegularitywY}
We have $\dv (\omega_\eps Y_\eps)(t) \in C^{\alpha-1}(\R^2)$ with 
\begin{align*}
    \norm{\dv (\omega_\eps Y_\eps)(t)}_{C^{\alpha-1}}
        \le C_\al
            \exp\int_0^t \norm{\nabla u(s)}_{L^\iny} \, ds.
\end{align*}
\end{lemma}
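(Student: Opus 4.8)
The plan is to read $\dv(\omega_\eps Y_\eps)$ as the solution of a \emph{homogeneous} transport equation and to propagate its negative-order \Holder regularity by a transport estimate in $C^{\al - 1}$, supplying the initial regularity uniformly in $\eps$ through the auxiliary field $R_{0, \eps}$ rather than through $\omega_{0, \eps} Y_0$. This detour is essential: by \cref{e:wYREquiv} the two fields have the same divergence, $\dv(\omega_{0, \eps} Y_0) = \dv R_{0, \eps}$, but only $R_{0, \eps}$ is bounded in $C^\al$ uniformly in $\eps$ (by \cref{L:R0Decomp}), whereas $\omega_{0, \eps} Y_0$ is not.

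First I would record, from the second line of \cref{e:YTransportDiv}, that the scalar $w_\eps := \dv(\omega_\eps Y_\eps)$ satisfies
\begin{align*}
    \prt_t w_\eps + u_\eps \cdot \grad w_\eps = 0, \qquad w_\eps|_{t = 0} = \dv(\omega_{0, \eps} Y_0),
\end{align*}
with no forcing term. Since $w_\eps(t)$ lies only in $C^{\al - 1}(\R^2)$, a space of distributions, this identity is to be understood in the weak sense of Definition~3.13 of \cite{BahouriCheminDanchin2011} (see \cref{R:WeakTransport}). For the initial datum, \cref{e:wYREquiv} gives $w_\eps|_{t = 0} = \dv R_{0, \eps}$, so \cref{e:dvvCalBound} together with \cref{L:R0Decomp} yields the $\eps$-uniform bound $\norm{w_\eps|_{t = 0}}_{C^{\al - 1}} = \norm{\dv R_{0, \eps}}_{C^{\al - 1}} \le \norm{R_{0, \eps}}_{C^\al} \le C_\al$.

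The main step is to invoke the transport estimate at negative Besov regularity, Theorem~3.19 of \cite{BahouriCheminDanchin2011}, after identifying $C^{\al - 1}(\R^2)$ with $B^{\al - 1}_{\iny, \iny}(\R^2)$. Taking $s = \al - 1 \in (-1, 0)$, with vanishing forcing and the smooth (hence Lipschitz) velocity $u_\eps$, that theorem propagates the initial bound to
\begin{align*}
    \norm{w_\eps(t)}_{C^{\al - 1}} \le \norm{w_\eps|_{t = 0}}_{C^{\al - 1}} \exp \int_0^t \norm{\grad u_\eps(s)}_{L^\iny} \, ds \le C_\al \exp \int_0^t \norm{\grad u_\eps(s)}_{L^\iny} \, ds.
\end{align*}
Because every bound here is uniform in $\eps$ and $u_\eps \to u$, the stated estimate, written in terms of $\grad u$, follows in the limit.

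I expect the main obstacle to be the correct invocation of the transport estimate at the \emph{negative} index $s = \al - 1$. On the $L^\iny$-based Besov scale the admissible range of $s$ for the generic transport estimate does not reach negative values, so one must use that $u_\eps$ is \emph{divergence-free} (the incompressibility of the Euler flow) to bring $s = \al - 1 \in (-1,0)$ into range; this is precisely why \cref{R:WeakTransport} singles out Theorem~3.19 as the one place a genuinely weak transport argument is unavoidable. One must also verify that the paper's $C^{\al - 1}$, defined as $\set{f + \dv v \colon f, v \in C^\al}$, coincides with $B^{\al - 1}_{\iny, \iny}$, so that the conclusion of Theorem~3.19 translates verbatim into the claimed norm. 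The homogeneity of the equation ($g \equiv 0$) is what produces the clean single exponential with no additive term, the constant in the exponent being harmless (indeed at most one in this divergence-free scalar setting).
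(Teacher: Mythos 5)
Your argument is correct and its main step coincides with the paper's: identify $C^{\al - 1}(\R^2)$ with $B^{\al - 1}_{\iny,\iny}(\R^2)$ and apply the Bahouri--Chemin--Danchin transport estimate to the homogeneous weak transport equation \cref{e:YTransportDiv}$_2$, where the divergence-free hypothesis on $u_\eps$ is indeed what brings the negative index $s = \al - 1 \in (-1,0)$ into range. (The paper's proof cites Theorem~3.14 of \cite{BahouriCheminDanchin2011} while \cref{R:WeakTransport} refers to Theorem~3.19; that discrepancy is the paper's, and your reading is consistent with the remark.) Where you genuinely diverge is in the uniform-in-$\eps$ bound on the initial datum: the paper re-derives $\norm{\dv(\omega_{0,\eps}Y_0)}_{C^{\al - 1}} \le C_\al$ by hand, splitting off $\rho_\eps * \dv(\omega_0 Y_0)$, estimating the commutator $\omega_{0,\eps}Y_0 - \rho_\eps*(\omega_0 Y_0)$ in $C^\al$ via \cref{e:rhoepsY0Est}, and applying \cref{L:EquivalentConditions} twice to the remaining piece. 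You instead use $\dv(\omega_{0,\eps}Y_0) = \dv R_{0,\eps}$ and conclude in one line from \cref{e:dvvCalBound} and \cref{L:R0Decomp}. This is a legitimate shortcut, since \cref{L:R0Decomp} is established before this lemma and its proof already contains exactly the two estimates the paper repeats here; what it buys is brevity and a cleaner explanation of why $R_{0,\eps}$ was introduced at all, at no cost to the final bound, which is identical in both routes.
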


\begin{proof}
\Ignore{ 
As observed in \cref{e:wYREquiv}, $\dv (\omega_\eps Y_\eps)$ and $\dv R_\eps$ are the same.

As noted above,
\begin{align*} 
    \begin{split}
        &\prt_t \dv (\omega_\eps Y_\eps) + u_\eps \cdot \nabla \dv(\omega_\eps Y_\eps)
            = 0, \\
        &\dv (\omega_\eps Y_\eps)(0, x)
            =\dv (\omega_{0, \eps} Y_0) \in C^{\alpha-1}(\R^2). 
    \end{split}
\end{align*}
} 
Noting that $C^{\alpha-1}(\R^2)$ is equivalent to the Besov space $B^{\alpha-1}_{\infty,\infty}(\R^2)$, Theorem 3.14 of \cite{BahouriCheminDanchin2011} applied to the weak transport equation in \cref{e:YTransportDiv} (see \cref{R:WeakTransport}) gives
\begin{align*}
    \norm{\dv (\omega_\eps Y_\eps)(t)}_{C^{\alpha-1}}
        \le C \norm{\dv (\omega_{0, \eps} Y_0)}_{C^{\alpha-1}}
            \exp\int_0^t \norm{\nabla u(s)}_{L^\iny} \, ds.
\end{align*}
We must still, however, bound $\norm{\dv (\omega_{0, \eps} Y_0)}_{C^{\alpha-1}}$ uniformly in $\eps$. 

From the triangle inequality,
\begin{align*}
    \norm{\dv (\omega_{0, \eps} Y_0)}_{C^{\alpha-1}}
        \le \norm{\dv(\omega_{0, \eps} Y_0) - \rho_\eps * \dv (\omega_0 Y_0)}_{C^{\al - 1}}
            + \norm{\rho_\eps * \dv (\omega_{0, \eps} Y_0)}_{C^{\alpha-1}}.
\end{align*}
Now,
\begin{align*}
    \norm{\dv(\omega_{0, \eps} Y_0) - \rho_\eps * \dv (\omega_0 Y_0)}_{C^{\al - 1}}
        \le \norm{\omega_{0, \eps} Y_0 - \rho_\eps * (\omega_0 Y_0)}_{C^\al}
        \le C_\al,
\end{align*}
the first inequality following from \cref{e:dvvCalBound}, the second from 
\cref{e:rhoepsY0Est}.
Also,
\begin{align*}
    \norm{\rho_\eps * \dv (\omega_0 Y_0)}_{C^{\al - 1}}
        &\le C\norm{\grad \Cal{F} * (\rho_\eps * \dv (\omega_0 Y_0))}_{C^\al}
        = C\norm{\rho_\eps * (\grad \Cal{F} * \dv (\omega_0 Y_0))}_{C^\al} \\
        &\le C\norm{\grad \Cal{F} * \dv (\omega_0 Y_0)}_{C^\al}
        \le C \pr{\norm{\omega_0 Y_0}_{L^\iny}
            + \norm{\dv (\omega_0 Y_0)}_{C^{\al - 1}}}.
\end{align*}
For the first inequality we applied \cref{L:EquivalentConditions}, for the second inequality we used $\norm{\rho_\eps * f}_{C^\al} \le \norm{f}_{C^\al}$, and for the third we applied \cref{L:EquivalentConditions} once more.
\Ignore{ 
\begin{align*}
    \norm{\rho_\eps * f}_{\dot{C}^\al}
        = \sup_{x \ne z} \abs{\int \rho_\eps(y)
            \frac{f(x - y) - f(z - y)}{\abs{x - z}^\al} \, dy}
        \le \norm{f}_{\dot{C}^\al} \int \rho_\eps(y) \, dy
        = \norm{f}_{\dot{C}^\al}.
\end{align*}
} 
Hence,
\begin{align*}
    \norm{\dv (\omega_{0, \eps} Y_0)}_{C^{\alpha-1}}
        \le C_\al
            + \pr{\norm{\omega_0 Y_0}_{L^\iny}
                + \norm{\dv (\omega_0 Y_0)}_{C^{\al - 1}}}
        \le C_\al.
\end{align*}
\end{proof}

\begin{remark}\label{R:CannotMollifyY0}
    It would be natural to let $Y_{0, \eps} = \rho_\eps * Y_0$ and pushforward
    $Y_{0, \eps}$ rather than $Y_0$ in the definition of $Y_\eps$, and also
    use $Y_{0, \eps}$ rather than $Y_0$ in the definition of $R_{0, \eps}$.
    This would allow us to use transport equations purely in strong form.
    It is the bound in \cref{e:R0InternalBound}, however, that prevents us from
    doing this, as the equivalent bound with $Y_{0, \eps}$ in place of $Y_0$
    may not hold true. Instead, we take the approach described in
    \cref{A:TransportEstimates}.  
\end{remark}

\section{Proof of Serfati's Theorem Part I}
\label{S:ProofOfSerfatisResult}

\noindent In this section we prove the first part of \cref{T:Serfati}; namely, \cref{e:MainBounds}. Before proceeding to the fairly long and technical proof, let us first sketch the overall strategy.

The proof hinges on the transport equation for $Y_\eps$ in \cref{e:YTransportAlmost} and the identity in \cref{C:graduCor}. Together, they allow us to relate the four quantities,
\begin{align*}
    \begin{array}{lll}
        Q_1 := \norm{Y_\eps \cdot \grad u_\eps}_{C^\al}, & &
        Q_2 := \norm{\grad u}_{L^\iny}, \\
        Q_3 := \norm{Y_\eps}_{C^\al}, & &
        Q_4 :=
        \norm{K * \dv(\omega_\eps Y_\eps)}_{C^\al}.
    \end{array}
\end{align*}

A bound on $Q_4$ comes essentially for free via \cref{L:RegularitywY}, so we may as well take it as given. From \cref{e:YTransportAlmost} we obtain a bound on $Q_3$ in terms of $Q_1$, and from \cref{C:graduCor} we obtain a bound on $Q_1$ in terms of $Q_2$ and $Q_3$. Obtaining these estimates will occupy \crefrange{S:EstimationGraduAndY}{S:EstYR}, and will also involve estimates on the flow map $\eta_\eps$.

At this point, we could close the estimates if we could obtain a bound on $Q_2$ in terms of $Q_3$. This is the subject of \cref{S:Refinedgradueps}, which will lead in \cref{S:ClosingWithGronwall} to a bound on $Q_2$ in terms of itself.  Once we have a bound on $Q_2$, we easily obtain a bound on $Q_1$ and $Q_3$. In \cref{S:Convergence}, we show that in the limit we obtain \cref{e:MainBounds}.

Note that a coarse bound on $Q_2$ in terms of $Q_1$ and $Q_3$ is easily derived (along the lines of \cref{e:gradu0Bound}), but is inadequate: It will take a great deal more work to obtain in \cref{S:Refinedgradueps} a tight enough bound that  Gronwall's lemma can be successfully applied in \cref{S:ClosingWithGronwall}.

\subsection{Preliminary estimate of $\norm{\nabla u_\eps(t)}_{L^\iny}$}\label{S:EstimationGraduAndY}

By the expression for $\grad u_\eps$ in \cref{L:graduExp},
\begin{align*}
    \norm{\grad u_\eps(t)}_{L^\iny}
        \le V_\eps(t),
\end{align*}
where
\begin{align}\label{e:VepsDef}
        V_\eps(t)
            :=
            \norm{\omega_0}_{L^\iny}
                + \norm{\PV \int \grad K(\cdot - y) \omega_\eps(t, y) \, dy}_{L^\iny}.
\end{align}
Here,
we used \cref{e:omegaNormp} to replace $\norm{\omega_\eps(t)}_{L^\iny}$ by $\norm{\omega_0}_{L^\iny}$ in the first term.

\subsection{Estimate of $\norm{\nabla \eta_\eps(t)}_{L^\iny}$ and $\norm{\nabla \eta^{-1}_\eps(t)}_{L^\iny}$}\label{S:etaetainv}

As in \cref{e:etaDef}, the defining equation for $\eta_\eps$ is
\begin{align}\label{e:etaDefRedux}
    \prt_t \eta_\eps(t, x)
        = u_\eps(t, \eta_\eps(t, x)),
            \quad
        \eta_\eps(0,x) = x,
\end{align}
or, in integral form,
\begin{align}\label{e:etaepsIntForm}
    \eta_\eps(t, x) = x + \int_0^t u_\eps(s, \eta_\eps(s, x)) \, ds.
\end{align}
This immediately implies that 
\begin{align}\label{e:gradetaBound}
    \norm{\nabla \eta_\eps(t)}_{L^\iny}
        \le \exp \int_0^t V_\eps(s) \, ds. 
\end{align}
Similarly,
\begin{align}\label{e:gradetaInvBound}
    \norm{\nabla \eta^{-1}_\eps(t)}_{L^\iny}
        \le \exp \int_0^t V_\eps(s) \, ds. 
\end{align}
The bound in \cref{e:gradetaInvBound} does not follow as immediately as that in \cref{e:gradetaBound} because the flow is not autonomous. For the details, see, for instance, the proof of Lemma 8.2 p. 318-319 of \cite{MB2002} (applying the argument there to $\grad \eta_\eps^{-1}$ rather than to $\eta_\eps^{-1}$).

Applying \cref{L:Gronwall} to \cref{e:etaepsIntForm} yields
\begin{align*}
    \abs{x - y} \exp \pr{-\int_0^t V_\eps(s) \, ds}
        \le \abs{\eta_{\eps}(t, x) - \eta_{\eps}(t, y)}
        \le \abs{x - y} \exp \int_0^t V_\eps(s) \, ds.
\end{align*}
Hence, for any $\delta_0 > 0$,
\begin{align}\label{e:NbdExpansion}
    \Ndelta{\eta_\eps(t, \Sigma)}{\delta_t^-}
        \subseteq \eta_\eps(t, \Ndelta{\Sigma}{\delta_0})
        \subseteq \Ndelta{\eta_\eps(t, \Sigma)}{\delta_t^+},
            \quad
        \delta_t^\pm := \delta_0 \exp \pr{\pm \int_0^t V_\eps(s) \, ds}.
\end{align}

\subsection{Estimate of $Y_\eps$ and $R_{\epsilon}$}\label{S:EstYR}
%
%
Taking the inner product of \cref{e:TwoTransports}$_1$ with $Y_\eps(t, \eta_\eps(t, x))$ gives
\begin{align*}
    \diff{}{t} Y_\eps(t, \eta_\eps(t, x)) \cdot Y_\eps(t, \eta_\eps(t, x))
        = (Y_\eps \cdot \grad u_\eps)(t, \eta_\eps(t, x)) \cdot Y_\eps(t, \eta_\eps(t, x)).
\end{align*}
The left-hand side equals
\begin{align*}
    \frac{1}{2} \diff{}{t} \abs{Y_\eps(t, \eta_\eps(t, x))}^2
\end{align*}
so
\begin{align*}
    \abs{\diff{}{t} \abs{Y_\eps(t, \eta_\eps(t, x))}^2}
        &\le 2 \norm{\grad u_\eps(t, \eta_\eps(t, \cdot))}_{L^\iny}
            \abs{Y_\eps(t, \eta_\eps(t, x))}^2 \\
        &= 2 \norm{\grad u_\eps(t)}_{L^\iny}
            \abs{Y_\eps(t, \eta_\eps(t, x))}^2
        \le 2 V_\eps(t)
            \abs{Y_\eps(t, \eta_\eps(t, x))}^2.
\end{align*}
It follows that 
\begin{align*}
    \diff{}{t} \abs{Y_\eps(t, \eta_\eps(t, x))}^2
        \le 2 V_\eps(t)
            \abs{Y_\eps(t, \eta_\eps(t, x))}^2.
\end{align*}
Similarly, 
\begin{align*}
    \diff{}{t} \abs{Y_\eps(t, \eta_\eps(t, x))}^2
       \ge - 2 V_\eps(t)
            \abs{Y_\eps(t, \eta_\eps(t, x))}^2.
\end{align*}
Integrating in time and applying \cref{L:Gronwall} gives
\begin{align*}
    \abs{Y_0(x)}
            e^{- \int_0^t \norm{\grad u_\eps(s)}_{L^\iny} \, ds}
        \le \abs{Y_\eps(t, \eta_\eps(t, x))}
        \le \abs{Y_0(x)}
            e^{\int_0^t \norm{\grad u_\eps(s)}_{L^\iny} \, ds}.
\end{align*}
Taking the $L^\iny$ norm in $x$, we conclude that
\begin{align}\label{e:YepsLinfAbove}
    \norm{Y_\eps(t)}_{L^\iny}
        \le \norm{Y_0}_{L^\iny} e^{\int_0^t V_{\epsilon}(s) \, ds}.
\end{align}
Also, for any measurable set $\Lambda \subseteq \R^2$,
\begin{align}\label{e:YepsBelow}
    \norm{Y_\eps(t)}_{\inf(\eta_\eps(t, \Lambda))}
        \ge \norm{Y_0}_{\inf(\Lambda)} e^{-\int_0^t V_{\epsilon}(s) \, ds}.
\end{align}

The estimate for $R_\eps$ corresponding to \cref{e:YepsLinfAbove} is, using \cref{L:R0Decomp},
\begin{align}\label{e:RepsLinfAbove}
    \norm{R_\eps(t)}_{L^\iny}
        \le C(\omega_0, Y_0) e^{\int_0^t V_{\epsilon}(s) \, ds}.
\end{align}

Integrating \cref{e:TwoTransports}$_1$ in time and substituting $\eta^{-1}_{\eps}(t, x)$ for $x$ yields
\begin{align*} 
    Y_\eps(t, x)
        = Y_0(\eta^{-1}_\eps(t, x))
            + \int_0^t (Y_\eps \cdot \grad u_\eps) (s, \eta_\eps(s,\eta^{-1}_\eps(t, x))) \, ds. 
\end{align*}
Taking the $\dot{C}^\alpha$ norm and applying \cref{e:CdotIneq}$_1$, we have 
\begin{align*} 
    \norm{Y_\eps(t)}_{\dot{C}^\alpha}
        &\le \norm{Y_0}_{\dot{C}^\alpha}
            \norm{\nabla \eta^{-1}_\eps(t)}^\alpha_{L^\iny}
            + \int_0^t \norm{(Y_\eps \cdot \nabla u_\eps)(s)}_{\dot{C}^\alpha}
            \norm{\nabla(\eta_\eps(s,\eta^{-1}_\eps(t, x)))}^\alpha_{L^\iny} \, ds.
\end{align*}

Now, by \cref{C:graduCor}, we have
\begin{align*} 
    Y_\eps \cdot \nabla u_\eps(s, x)
        = \PV \int
                &\grad K(x - y) \omega_\eps(s, y)
                \brac{Y_\eps(s, x)-Y_\eps(s, y)} \, dy \\
            &+ K * \dv (\omega_\eps Y_\eps)(s, x)
        =: \text{I} + \text{II}
\end{align*}
with
\begin{align*}
    \norm{\text{I}}_{C^\alpha}
        &\le C \norm{Y_\eps(s)}_{C^\alpha} V_\eps(s).
\end{align*}
By \cref{L:EquivalentConditions,L:RegularitywY}, we have
\begin{align*}
    \norm{\text{II}}_{C^\alpha}
        \le C_\al \exp\int_0^s V_\eps(\tau) \, d \tau.
\end{align*}
It follows that
\begin{align}\label{e:YgraduepsBound}
    \norm{Y_\eps \cdot \grad u_\eps(t)}_{C^\al}
        \le \norm{Y_\eps(t)}_{C^\alpha} V_\eps(t)
            + C_\al \exp\int_0^t V_\eps(\tau) \, d \tau.
\end{align}

To estimate $\smallnorm{\nabla(\eta_\eps(s,\eta^{-1}_\eps(t, x)))}_{L^\iny}$, we start with
\begin{align*}
    \prt_\tau \eta_\eps(\tau, \eta^{-1}_\eps(t, x))
        = u_\eps(\tau, \eta_\eps(\tau, \eta^{-1}_\eps(t, x))),
\end{align*}
which follows from \cref{e:etaDefRedux}. Applying the spatial gradient and the chain rule gives
\begin{align*}
    \prt_\tau \grad \pr{\eta_\eps(\tau, \eta^{-1}_\eps(t, x))}
        = \nabla u_\eps(\tau, \eta_\eps(\tau, \eta^{-1}_\eps(t, x)))
            \nabla(\eta_\eps(\tau, \eta^{-1}_\eps(t, x))).
\end{align*}
Integrating in time and using
$
    \nabla(\eta_\eps(\tau, \eta^{-1}_\eps(t, x)))|_{\tau = t}
        = I^{2\times 2},
$
the identity matrix, we have
\begin{align*}
    \grad \pr{\eta_\eps(s, \eta^{-1}_\eps(t, x))}
        &= I^{2 \times 2}
            - \int_s^t \nabla u_\eps(\tau, \eta_\eps(\tau, \eta^{-1}_\eps(t, x)))
            \nabla(\eta_\eps(\tau, \eta^{-1}_\eps(t, x))) \, d \tau.
\end{align*}

By \cref{L:Gronwall}, then,
\begin{align*}
    \norm{\nabla(\eta_\eps(s,\eta^{-1}_\eps(t, x)))}_{L^\iny}
            \le \exp\int_s^t \norm{\nabla u_\eps(\tau)}_{L^\iny} \, d\tau
        \le \exp\int_s^t V_\eps(\tau) \, d\tau.
\end{align*}

These bounds with \cref{e:gradetaInvBound}, and accounting for \cref{e:YepsLinfAbove}, give
\begin{align*} 
    \begin{split}
        &\norm{Y_\eps(t)}_{C^\alpha}
            \le \norm{Y_0}_{C^\al} \exp \pr{\al \int_0^t V_\eps(s) \, ds} \\
        &\qquad\qquad
                + \int_0^t \brac{\norm{Y_\eps(s)}_{C^\alpha} V_\eps(s)
                    + C_\al \exp\int_0^s V_\eps(\tau) \, d \tau
                    }
            \exp \pr{\al \int_s^t V_\eps(\tau) \, d\tau \, ds}
            \\
        &\qquad\le (\norm{Y_0}_{C^\al} + C_\al t) \exp\int_0^t V_\eps(s) \, ds
            + \int_0^t \norm{Y_\eps(s)}_{C^\alpha} V_\eps(s)
                \brac{\exp \int_s^t V_\eps(\tau) \, d\tau} \, ds.
    \end{split}
\end{align*}
Letting
\begin{align*}
    y_\eps(t)
        = \norm{Y_\eps(t)}_{C^\alpha} \exp\brac{-\int_0^t V_\eps(s) \, ds}
\end{align*}
it follows that $y_\eps$ satisfies the inequality, 
\begin{align*}
    y_\eps(t)
        \le \norm{Y_0}_{C^\al} + C_\al t + \int_0^t V_\eps(s) y_\eps(s) \, ds.
\end{align*}
Therefore, by \cref{L:Gronwall}, we obtain
\begin{align*} 
    y_\eps(t)
        &\le \pr{\norm{Y_0}_{C^\al} + C_\al t}
            \exp \pr{\int_0^t V_\eps(s) \, ds}
        \le C_\al (1 + t)
            \exp \pr{\int_0^t V_\eps(s) \, ds}
\end{align*}
and thus, with the similar bound for $R_\eps$,
\begin{align}\label{e:YRCalphaBound}
    \norm{Y_\eps(t)}_{C^\alpha}, \; \norm{R_\eps(t)}_{C^\alpha}
        \le C_\al (1 + t)
            \exp \pr{2 \int_0^t V_\eps(s) \, ds}.
\end{align}

\subsection{Refined estimate of $\nabla u_\eps$}\label{S:Refinedgradueps}
We split the second term in $V_{\epsilon}$ in \cref{e:VepsDef} into two parts, as (see \cref{D:PValg} for the meaning of $\starp$)
\begin{align}\label{e:pvgradKomegaFirstSplit}
    \begin{split}
        \PV \int &\grad K(x - y) \omega_\eps(t, y) \, dy
            = \grad K \starp \omega_\eps(t, x) \\
            &= \grad (a_r K) \starp \omega_\eps(t, x)
                + \grad ((1 - a_r) K) \starp \omega_\eps(t, x),
    \end{split}
\end{align}
where $r \in (0, 1]$ will be chosen later (in \cref{e:rChoice}).

On the support of $\grad (1 - a_r) = - \grad a_r$, $\abs{x - y} \le 2 r$, so
\begin{align}\label{e:OnearKBound}
    \abs{\grad ((1 - a_r) K)}
        \le \abs{(1 - a_r) \grad K} + \abs{\grad a_r \otimes K)}
        \le C \abs{x - y}^{-2}.
\end{align}
Hence, one term in \cref{e:pvgradKomegaFirstSplit} is easily bounded by 
\begin{align*}
    &\abs{\grad ((1 - a_r) K) \starp \omega_\eps(t, x)}
        \le C \int_{B_r^C(x)} \abs{x - y}^{-2} \abs{\omega_\eps(t, y)} \, dy \\
        &\qquad
        \le C \int_r^1 \frac{\norm{\omega_\eps}_{L^\iny}}{\rho^2} \rho \, d \rho
        + C \smallnorm{\abs{x - \cdot}^{-2}}_{L^\iny(B_1^C(x))}
            \norm{\omega_{\eps, 0}}_{L^1} \\
        &\qquad
        \le -C \log r \norm{\omega_0}_{L^\iny}
            + C \norm{\omega_0}_{L^1}
        \le C (-\log r + 1) \norm{\omega_0}_{L^1 \cap L^\iny}.
\end{align*}

For the other term in \cref{e:pvgradKomegaFirstSplit}, we decompose the vorticity
as follows. Let $\delta_0$ be as in \cref{e:Assumption}$_2$ and take a smooth function $\chi$ such that $\chi = 1$ on $\Ndelta{\Sigma}{\delta_0/4}$ and $\chi = 0$ on $\Ndelta{\Sigma}{\delta_0/2}^C$ (see \cref{D:Sigmadelta}). Let $b$ be a smooth function such that $b = 1$ on $\Ndelta{\Sigma}{3 \delta_0/4}$ and $b = 0$ on $\Ndelta{\Sigma}{\delta_0}^C$. 
Then $\abs{Y_0} \ge c$ on $\supp b$ by \cref{e:Assumption}$_2$, so
\begin{align}\label{e:Y0onsuppb}
    \abs{Y_0(\eta_\eps^{-1})} \ge c
        \text{ on } \supp b(\eta_\eps^{-1}) \supseteq \supp \chi(\eta_\eps^{-1}).
\end{align}

We now let
\begin{align*} 
    \omega^1_{0, \eps} = \chi \omega_{0, \eps},
        \quad
    \omega^2_{0, \eps} = (1 - \chi) \omega_{0, \eps},
\end{align*}
and then let
\begin{align} \label{e:DecompositionOfVorticity}
\begin{split}
    \omega^1_\eps(t, x)
        = \omega^1_{0, \eps} \pr{\eta^{-1}_\eps(t, x)}, \qquad
    \omega^2_\eps(t, x)
        = \omega^2_{0, \eps} \pr{\eta^{-1}_\eps(t, x)}.
  \end{split}
\end{align}

The vorticity $\omega^1_\eps$ is the ``bad vorticity,'' in that it is transported from a neighborhood of the set $\Sigma$ on which the initial vorticity fails to be $C^\al$. By contrast, $\omega^2_\eps$ is the ``good vorticity'' since
for all sufficiently small $\eps > 0$, we have
\begin{align}\label{e:omega20epsBound}
    \norm{\omega^2_{0, \eps}}_{C^\al}
        \le C \norm{\omega_0}_{C^\al(\R^2 \setminus \Sigma)}
        = C(\omega_0).
\end{align}

By \cref{e:NbdExpansion},
\begin{align*} 
    \supp \omega^1_\eps \subseteq \Ndelta{\eta_\eps(t, \Sigma)}{\delta/2},
        \quad
    \delta = \delta_t^+ := \delta_0 \exp \int_0^t V_\eps(s) \, ds.
\end{align*}

Now we split the other term in \cref{e:pvgradKomegaFirstSplit} into three parts, as
\begin{align*} 
    \begin{split}
        \grad (a_r K) \starp \omega_\eps
            &=(1 - b)(\eta^{-1}_\eps)
                \grad (a_r K) \starp \omega^1_\eps
                + (1 - b)(\eta^{-1}_\eps)
                \grad (a_r K) \starp \omega^2_\eps \\
            &\qquad\qquad
                + b(\eta^{-1}_\eps)
                 \grad (a_r K) \starp \omega_\eps \\
        &=: \text{III}_1 + \text{III}_2 + \text{III}_3.
    \end{split}
\end{align*}
Now choose
\begin{align}\label{e:rChoice}
    r
        = \min \bigset{
            1, \frac{\delta_0}{8} \exp \pr{- C' \int_0^t V_\eps(s) \, ds}
            },
\end{align}
leaving the choice of the constant $C' > 1$ until later (see \cref{e:CprimeChoice}).
We have $r < \delta_t^-/8$, where $\delta_t^-$ is defined in \cref{e:NbdExpansion}, so $\grad (a_r K) \starp \omega^1_\eps$ is supported on $\Ndelta{\eta_\eps(t, \Sigma)}{\delta}$ for all $\eps < \delta/8$. Hence, $\text{III}_1 = 0$.

Noting that the bound in \cref{e:OnearKBound} applies also to $\abs{\grad (a_r K)}$, we have
\begingroup
\allowdisplaybreaks
\begin{align*}
    \begin{split}
        \abs{\lim_{h \to 0} \text{III}_2}
            &= \abs{(1 - b)(\eta^{-1}_\eps) \lim_{h \to 0} \int_{\R^2}
                \grad (a_r K)(x - y)
                    \brac{\omega^2_\eps(y)-\omega^2_\eps(x)} \, dy} \\
            &\le C \norm{\omega^2_\eps(t)}_{\dot{C}^\alpha}
                \int_{\R^2} \abs{x - y}^\alpha
                \abs{\grad (a_r K)(x - y)} \, dy \\
            &\le C \norm{\omega^2_{0, \eps}}_{\dot{C}^\alpha}
                \norm{\nabla(\eta^{-1}_\eps)}^\alpha_{L^\iny}
                \int_{\abs{x - y} \le 2r} \abs{x - y}^{\alpha-2} \, dy \\
            &\le C(\omega_0) \al^{-1}
                \norm{\nabla \eta^{-1}_\eps }^\alpha_{L^\iny} r^\alpha.
    \end{split}
\end{align*}
\endgroup
In the second inequality we used \cref{e:CdotIneq}$_1$ and in the third we used \cref{e:omega20epsBound}.

To estimate $\text{III}_3$, we will find it slightly more convenient to use $\grad \Cal{F}$ in place of $K = \grad^\perp \Cal{F}$, the norms that result being identical. Letting $\mu_{rh}$ be as in \cref{D:Radial}, by virtue of \cref{L:ConvEq}, we can write
\begin{align*} 
    \abs{\text{III}_3}
        = \abs{b(\eta^{-1}_\eps) \lim_{h \to 0} \grad (\mu_{hr} K) \starp \omega_\eps}
        = \lim_{h \to 0} \abs{b(\eta^{-1}_\eps) B},
\end{align*}
where
\begin{align*}
    B = \grad \brac{\mu_{rh}
        \grad \mathcal{F}} * \omega_\eps.
\end{align*}

Because $\grad \brac{\mu_{rh} \grad \mathcal{F}}$ is not in $L^1$ uniformly in $h > 0$, we cannot estimate $B$ in $L^\iny$ directly. Instead, we will apply \cref{L:SerfatiLemma1} with
\begin{align*}
    M
        =
        \begin{pmatrix}
            Y_\eps & \pr{(\grad \eta_\eps)^T Y_0^\perp} \circ \eta^{-1}_\eps
        \end{pmatrix},
\end{align*}
so that $M_1 = Y_\eps$.

From
\begin{align*} 
\begin{split}
    Y_\eps \circ \eta^\eps
        &= \grad \eta_\eps \matrix{Y_0^1}{Y_0^2}
        = \matrix
            {\prt_1 \eta^1_\eps & \prt_2 \eta^1_\eps}
            {\prt_1 \eta^2_\eps & \prt_2 \eta^2_\eps} 
        \matrix{Y_0^1}{Y_0^2}
    = \matrix
        {Y^1_0 \prt_1 \eta^1_\eps + Y^2_0 \prt_2 \eta^1_\eps}
        {Y^1_0 \prt_1 \eta^2_\eps + Y^2_0 \prt_2 \eta^2_\eps}, \\
    (\nabla \eta_\eps)^T Y_0^\perp
        &= \matrix
            {\prt_1 \eta^1_\eps & \prt_1 \eta^2_\eps}
            {\prt_2 \eta^1_\eps & \prt_2 \eta^2_\eps} 
        \matrix{-Y^2_0}{Y^1_0}
    = \matrix
        {-Y^2_0 \prt_1 \eta^1_\eps + Y^1_0 \prt_1 \eta^2_\eps}
        {-Y^2_0 \prt_2 \eta^1_\eps + Y^1_0 \prt_2 \eta^2_\eps},
 \end{split}
\end{align*}
we have
\begin{align*}
    M
        = \matrix
            {Y^1_0 \prt_1 \eta^1_\eps + Y^2_0 \prt_2 \eta^1_\eps
                & -Y^2_0 \prt_1 \eta^1_\eps + Y^1_0 \prt_1 \eta^2_\eps}
            {Y^1_0 \prt_1 \eta^2_\eps + Y^2_0 \prt_2 \eta^2_\eps
                & -Y^2_0 \prt_2 \eta^1_\eps + Y^1_0 \prt_2 \eta^2_\eps} 
        \circ \eta^{-1}_\eps.
\end{align*}
A direct computation yields
\begin{align*}
    \det M(t, x)
        = \abs{Y_0}^2\pr{\eta^{-1}_\eps(t, x)}
            \det \nabla \eta_\eps \pr{\eta^{-1}_\eps(t, x)}
        = \abs{Y_0}^2\pr{\eta^{-1}_\eps(t, x)},
\end{align*}
since $\det \nabla \eta_\eps (\eta^{-1}_\eps(t, x)) = 1$.

For the rest of the analysis of the matrix $B$, we restrict our analysis to $\supp b(\eta^{-1}_\eps)$, on which III$_3$ is supported. By \cref{e:Y0onsuppb}, then, we have
\begin{align}\label{e:detMBelow}
    \det M=\abs{Y_0}^2\left(\eta^{-1}_\eps(t, x)\right)
        \ge c^{2}
        > 0.
\end{align}
Hence, $M$ is invertible, and applying \cref{L:SerfatiLemma1}, we have
\begin{align*}
    \begin{split}
        \abs{B}
            \le C \brac{\frac{\norm{Y_\eps}_{L^\iny}
                    + \norm{Y_0}_{L^\iny} \norm{\nabla \eta_\eps}_{L^\iny}}
                    {c^2}
                    \abs{BM_1}
                + \abs{\tr B}}.
    \end{split}
\end{align*}

\Ignore{ 
\begin{align*}
    \begin{split}
        \abs{B}
            \le C \brac{\frac{\brac{\norm{Y_\eps}^{5}_{L^\iny}
                + \norm{\nabla \eta_\eps }^{5}_{L^\iny}}\abs{BM_1 }}
                    {c^2\norm{Y_\eps(t)}^2_{\inf}}
                + \abs{\tr B}}.
    \end{split}
\end{align*}
} 

We now compute $\tr B$. We have,
\begin{align*} 
\begin{split}
    \tr B
        &= \brac{\prt_1 \mu_{rh} \prt_1 \mathcal{F}} * \omega_\eps
            + \brac{\prt_2 \mu_{rh} \prt_2 \mathcal{F}} * \omega_\eps
            + \brac{\mu_{rh} \Delta \mathcal{F}} * \omega_\eps \\
        &= \brac{\prt_1 \mu_{rh} \prt_1 \mathcal{F}} * \omega_\eps
            + \brac{\prt_2 \mu_{rh} \prt_2 \mathcal{F}} * \omega_\eps,
\end{split}
\end{align*}
using $\Delta \mathcal{F}=\delta_0$ and $\mu_{rh}(0)=0$ to remove the last term.

But, referring to \cref{R:Radial}, for $j = 1, 2$, we have
\begin{align*}
    \begin{split}
        \abs{\brac{\prt_j \mu_{rh} \prt_j \mathcal{F}} * \omega_\eps}
            &\le \frac{C}{r} \int_{r < \abs{x - y} < 2r}
                \frac{\abs{\omega_\eps(t, y)}}{\abs{x - y}} \, dy
            + \frac{C}{h} \int_{h < \abs{x - y} < 2h}
                \frac{\abs{\omega_\eps(t, y)}}{\abs{x - y}} \, dy \\
            &\le \frac{C}{r} \int_r^{2r} \frac{\norm{\omega_\eps(t)}_{L^\iny}}{\rho}
                \rho \, d \rho
            + \frac{C}{h} \int_h^{2h} \frac{\norm{\omega_\eps(t)}_{L^\iny}}{\rho}
                \rho \, d \rho \\
            &= C \norm{\omega_\eps(t)}_{L^\iny}
    \end{split}
\end{align*}
so that
\begin{align*}
    \lim_{h \to 0} \abs{\tr B}
        \le C \norm{\omega_0}_{L^\iny}.
\end{align*}

We next estimate $\abs{BM_1}$. Because 
\begin{align*}
    B
        = \matrix
            {\prt_1 \brac{\mu_{rh} \prt_1 \mathcal{F}} * \omega_\eps
                & \prt_2  \brac{\mu_{rh} \prt_1 \mathcal{F}} * \omega_\eps}
            {\prt_1 \brac{\mu_{rh} \prt_2  \mathcal{F}} * \omega_\eps
                & \prt_2  \brac{\mu_{rh} \prt_2  \mathcal{F}} * \omega_\eps}
\end{align*}
we have 
\begin{align*} 
    BM_1
        = \matrix{F_1}{F_2}
        := \matrix
            {(\prt_1 \brac{\mu_{rh} \prt_1 \mathcal{F}} * \omega_\eps) Y^1_\eps
                + (\prt_2  \brac{\mu_{rh} \prt_1 \mathcal{F}} * \omega_\eps)Y^2_\eps}
            {(\prt_1 \brac{\mu_{rh} \prt_2  \mathcal{F}} * \omega_\eps) Y^1_\eps
                + (\prt_2  \brac{\mu_{rh} \prt_2 \mathcal{F}} * \omega_\eps) Y^2_\eps}.
\end{align*}
We now decompose $F_1$ and $F_2$ into two parts as $F_k = d_k + e_k$, where
\begin{align*}
    d_k
        &= \sum_{j = 1}^2 (\prt_j \brac{\mu_{rh} \prt_k \mathcal{F}} * \omega_\eps)
                    Y^j_\eps
        - \prt_j \brac{\mu_{rh} \prt_k \mathcal{F}} * (\omega_\eps Y^j_\eps), \\ 
    e_k
        &= \prt_1 \brac{\mu_{rh} \prt_k \mathcal{F}} * (\omega_\eps Y^1_\eps)
        + \prt_2  \brac{\mu_{rh} \prt_k \mathcal{F}} * (\omega_\eps Y^2_\eps) = \dv \bigpr{\mu_{rh} \prt_k \mathcal{F} * (\omega_\eps Y_\eps)}.
\end{align*} 
\Ignore { 
$d_{i}$ is of the form
\begin{align*}
    (\prt_l \brac{\mu_{rh} \prt_k \mathcal{F}} * \omega_\eps) Y^{m}_\eps
        - \prt_l \brac{\mu_{rh} \prt_k \mathcal{F}} * (\omega_\eps Y^{m}_\eps) 
\end{align*}
and $e_{i}$ is in a divergence form
\begin{align*}
    \prt_1 \brac{\mu_{rh} \prt_{l} \mathcal{F}} * (\omega_\eps Y^1_\eps)
        + \prt_2  \brac{\mu_{rh} \prt_{l} \mathcal{F}} * (\omega_\eps Y^2_\eps),
    \quad l = 1, 2.
\end{align*}
} 
As we can see from \cref{R:Radial}, $\abs{\mu_{rh} \grad \mathcal{F}(x - y)} \le C \abs{x - y}^{-2}$, so
\begingroup
\allowdisplaybreaks
\begin{align*}
    \begin{split}
        \sum_{k = 1, 2} \abs{\lim_{h \to 0} d_k}
            &\le 2\abs{\lim_{h \to 0} \int_{\R^2}
                \nabla \brac{\mu_{rh} \nabla \mathcal{F}}(x - y) (Y_\eps(x)-Y_\eps(y))
                    \omega_\eps(y) \, dy}\\ 
            &\le C \norm{Y_\eps(t)}_{\dot{C}^\alpha} \norm{\omega_\eps(t)}_{L^\iny}
                \int_{\abs{x - y} \le 2r} \abs{x - y}^{\alpha-2} \, dy  \\
            &\le C \al^{-1} \norm{Y_\eps(t)}_{C^\alpha} \norm{\omega_0}_{L^\iny} r^\alpha
    \end{split}
\end{align*}
\endgroup
and
\begingroup
\allowdisplaybreaks
\begin{align} \label{e:ekBound}
    \begin{split}
        \sum_{k = 1, 2} \abs{\lim_{h \to 0} e_k}
            &\le 2 \abs{\lim_{h \to 0} \int_{\R^2} \brac{\mu_{rh}\nabla\mathcal{F}}(x - y)
                \dv (\omega_\eps Y_\eps)(y) \, dy} \\
        &= 2 \abs{\lim_{h \to 0} \int_{\R^2}
            \brac{\mu_{rh}\nabla\mathcal{F}}
                \big[\dv (\omega_\eps Y_\eps)(y)- \dv (\omega_\eps Y_\eps)(x)\big] \, dy} \\
        &= 2 \abs{\lim_{h \to 0} \int_{\R^2}
            \brac{\mu_{rh}\nabla\mathcal{F}}
                \big[\dv R_{\epsilon}(y)- \dv R_{\epsilon}(x)\big] \, dy} \\
        &= 2 \abs{\lim_{h \to 0} \int_{\R^2}
            \grad \brac{\mu_{rh}\nabla\mathcal{F}}
                \big[R_{\epsilon}(y)- R_{\epsilon}(x)\big] \, dy} \\
        &\le C \norm{R_\eps(t)}_{\dot{C}^\alpha}
                \int_{\abs{x - y} \le 2r} \abs{x - y}^{\alpha-2} \, dy  \\
        &\le C \al^{-1}\norm{R_\eps(t)}_{C^\alpha}  r^\alpha,
    \end{split}
\end{align}
\endgroup
where we used \cref{e:wYREquiv} and the regularity of $R_{\eps}$ in (\ref{e:YRCalphaBound}).
Thus,
\begin{align}\label{e:BFinalEst}
    \begin{split}
    \lim_{h \to 0} \abs{B}
        \le C &\al^{-1} \frac{
                    \norm{Y_\eps}_{L^\iny}
                        + \norm{Y_0}_{L^\iny} \norm{\nabla \eta_\eps }_{L^\iny}
                        }
                    {c^2}
                    \pr{
                        \norm{Y_\eps}_{C^\alpha} \norm{\omega_0}_{L^\iny}
                        + \norm{R_\eps}_{C^\alpha}}
                        r^\alpha \\
            &\qquad\qquad\qquad\qquad
                + C \norm{\omega_0}_{L^\iny}.
    \end{split}
\end{align}

Collecting all the bounds we have obtained so far, we conclude that
\begin{align}  \label{e:VepsBound}
    \begin{split}
        V_\eps(t)
            &\le C (1 - \log r) \norm{\omega_0}_{L^1 \cap L^\iny}
                + C(\omega_0) \al^{-1}
                \norm{\nabla \eta^{-1}_\eps(t)}^\alpha_{L^\iny} r^\alpha \\
                &\;
                + \frac{C}{c^2 \al}
                \frac{\norm{Y_\eps(t)}_{L^\iny}
                + \norm{Y_0}_{L^\iny} \norm{\nabla \eta_\eps(t)}_{L^\iny}}
                    {\norm{Y_\eps(t)}_{\inf(\supp b(\eta_\eps^{-1}))}}
                    \left(\norm{\omega_0}_{L^\iny}
                    \norm{Y_\eps(t)}_{C^\alpha}
                    + \norm{R_\eps(t)}_{C^\alpha} \right)r^\alpha.
    \end{split}
\end{align}

\subsection{Closing the estimates using Gronwall's lemma}\label{S:ClosingWithGronwall}
We have, from \cref{e:rChoice}, that
\begin{align*}
    1 - \log r
        &\le \abs{1 - \log \frac{\delta_0}{8}} + C' \int_0^t V_\eps(s) \, ds
        \le C(Y_0) + C' \int_0^t V_\eps(s) \, ds, \\
    r^\al
        &\le \frac{\delta_0^\al}{8^\al} \exp \pr{-C' \al \int_0^t V_\eps(s) \, ds}
        \le C(Y_0) \exp \pr{-C' \al \int_0^t V_\eps(s) \, ds}.
\end{align*}
Returning to \cref{e:VepsBound}, then, these bounds on $1 - \log r$ and $r^\al$, along with the bounds in \cref{e:gradetaBound,e:gradetaInvBound,e:YepsLinfAbove,e:RepsLinfAbove,e:YepsBelow}, and \cref{e:YRCalphaBound}, yield the estimate,
\begin{align*}
    V_\eps(t)
        &\le C(\omega_0, Y_0) + C' C(\omega_0, Y_0)
            \int_0^t V_\eps(s) \, ds
                + \frac{C(\omega_0, Y_0)}{\al}
                \exp \pr{\al(1 - C') \int_0^t V_\eps(s) \, ds} \\
        &\qquad
            + C_\al(1 + t)
                \exp \pr{\al(4 - C' \al) \int_0^t V_\eps(s) \, ds}
        \\ 
        &\le {C_\al (1 + t)} + \frac{C(\omega_0, Y_0)}{\al}
            \int_0^t V_\eps(s) \, ds
\end{align*}
as long as we choose
\begin{align}\label{e:CprimeChoice}
    C'
        = 4 \al^{-1}.
\end{align}
We note that, as required, $C' > 1$.

By \cref{L:Gronwall}, we conclude that
\begin{align*}
    \norm{\grad u_\eps(t)}_{L^\iny}
        \le V_\eps(t)
        \le C_\al (1 + t) e^{C(\omega_0, Y_0) \al^{-1} t}
        \le C_\al e^{c_\al t}.
\end{align*}
If $\al > 1/2$, we can apply the above bound with $1/2$ in place of $\al$, eliminating the factor of $(1 - \al)^{-1}$ that appear in $C_\al$. This gives
\begin{align}\label{e:graduepsLinffBound}
    \norm{\grad u_\eps(t)}_{L^\iny}
        \le V_\eps(t)
        = C(\omega_0, Y_0) \al^{-1} e^{c_\al t}
        \le c_\al e^{c_\al t}.
\end{align}

Then
\begin{align*}
    \int_0^t V_\eps(s) \, ds
        < \frac{c_\al}{c_\al} e^{c_\al t}
        = e^{c_\al t}
\end{align*}
so by virtue of \cref{e:YRCalphaBound},
\begin{align}\label{e:YRepsBounds}
    \begin{split}
    & \norm{Y_\eps(t)}_{C^\alpha}, \norm{R_\eps(t)}_{C^\alpha} 
        \le C_\al e^{e^{c_\al t}}. \\
    \end{split}
\end{align}
It follows from \cref{e:YgraduepsBound} that $\norm{Y_\eps \cdot \grad u_\eps(t)}_{C^\al}$ has this same bound.

\subsection{Convergence of approximate solutions}\label{S:Convergence}

\noindent In this section we show that in the limit as $\eps \to 0$, the estimates in \cref{e:MainBounds} hold for the solution $\omega$ to \cref{e:Eomega} with velocity $u$. For the delicate parts of the proof we follow the argument on pages 105-106 of \cite{C1998}, but beginning in a slightly different manner.

That the approximate solutions $(u_\eps)$ converge to the solution $u$ for bounded initial vorticity is by now classical (see Section 8.2 of \cite{MB2002}, for instance). Because $(\grad u_\eps)$ is uniformly bounded in $L^\iny$, however, we can obtain stronger convergence, as follows.

Fix $T > 0$. It follows from \cite{Serfati1995A}, under the assumption only that the initial vorticity and velocity are both in $L^\iny$, that $\grad p_\eps$ is bounded in $L^\iny([0, T] \times \R^2)$ (see \cite{KBounded2014} for details). Then since
$
    \prt_t u_\eps = - u_\eps \cdot \grad u_\eps - \grad p_\eps
$
it follows that $(\prt_t u_\eps)$ is bounded uniformly in $\eps$. So, in fact, $(u_\eps)$ is a bounded equicontinuous family on $[0, T] \times L$ for any compact subset $L$ of $\R^2$ with a Lipschitz modulus of continuity in time and space. By the Arzela-Ascoli theorem, a subsequence converges uniformly on $[0, T] \times L$ to some $\ol{u}$, with $\ol{u} = u$ by the uniqueness of limits. This also shows that $\grad u \in L^\iny([0, T] \times \R^2)$, with the bound in \cref{e:MainBounds}. (Here and in what follows, we take subsequences as necessary without relabeling the indices.) It then follows that $\grad \eta$, and for that matter $\grad \eta_\eps$, are in $L^\iny([0, T] \times \R^2)$.

It follows from \cref{L:uUniformDecay}, which we prove below, that $u_\eps$ decays in space uniformly in time and in $\eps$.  This same uniform decay applies to $u$ by the convergence we showed above. Thus, in fact, $u_\eps \to u$ in $L^\iny([0, T] \times \R^2)$ since we can control the size of $\abs{u_\eps - u}$ outside of a sufficiently large compact subset. By interpolation it follows that $u_\eps \to u$ in $L^\iny(0, T; C^\beta(\R^2))$ for all $\beta < 1$.

From \cref{e:etaepsIntForm}, then, we can estimate,
\begin{align*} 
    \begin{split}
    &\abs{\eta_\eps(t, x) - \eta(t, x)} \\
        &\qquad
        \le \int_0^t \abs{u_\eps(s, \eta_\eps(s, x))
            - u(s, \eta_\eps(s, x)} \, ds
            + \int_0^t \abs{u(s, \eta_\eps(s, x) - u(s, \eta(s, x)} \, ds \\
        &\qquad
        \le \int_0^t \norm{u_\eps(s) - u(s)}_{L^\iny}
            + \int_0^t \norm{\grad u}_{L^\iny([0, T] \times \R^2)}
                \abs{\eta_\eps(s, x) - \eta(s, x)} \, ds.
    \end{split}
\end{align*}
It follows from \cref{L:Gronwall} that $\eta_\eps - \eta \to 0$ in $L^\iny([0, T] \times \R^2)$ and, similarly, that $\eta_\eps^{-1} - \eta^{-1} \to 0$ in $L^\iny([0, T] \times \R^2)$. By interpolation it follows that $\eta_\eps - \eta \to 0$ in $L^\iny(0, T; C^\beta(\R^2))$ for all $\beta < 1$. 

\Ignore{ 
Suppose that a particle moving under the flow map $\eta_\eps$ is at position $x$ at time $t$. Let $\xi(\tau; t, x)$ be the position of that same particle at time $t - \tau$, where $0 \le \tau \le t$. Then
\begin{align*}
    \eta_\eps^{-1}(t, x) = \xi(t; t, x), \quad
    x = \xi(0; t, x)
\end{align*}
and
 \begin{align*}
    \diff{}{\tau} \xi(\tau; t, x)
        = -u_\eps(t - \tau, \xi(\tau; t, x)).
  \end{align*}
  By the fundamental theorem of calculus,
  \begin{align*}
      \xi(s; t, x) - x
          = \int_0^s \diff{}{\tau} \xi(\tau; t, x) \, d \tau,
  \end{align*}
or,
\begin{align*}
    \xi(s; t, x)
        = x - \int_0^s u_\eps(t - \tau, \xi(\tau; t, x)) \, d \tau.
\end{align*}

Assume that $t_1 < t_2$ and let $y = \xi(t_2 - t_1; t_2, x)$. Then
\begin{align*}
    &\abs{\eta_\eps^{-1}(t_2, x) - \eta_\eps^{-1}(t_1, x)}
        = \abs{\xi(t_2; t_2, x) - \xi(t_1; t_1, x)} \\
        &\qquad
        = \abs{\xi(t_1; t_1, \xi(t_2 - t_1; t_2, x)) - \xi(t_1; t_1, x)} \\
        &\qquad
        = \abs{\eta_\eps^{-1}(t_1, \xi(t_2 - t_1; t_2, x))
            - \eta_\eps^{-1}(t_1, x)} \\
        &\qquad
        \le C \abs{\xi(t_2 - t_1; t_2, x) - x}
        = C \abs{\int_{t_1}^{t_2} u_\eps(t - \tau, \xi(\tau; t, x)) \, d \tau} \\
        &\qquad
        \le C \abs{t_2 - t_1}.
\end{align*}
(Note that for bounded initial vorticity, where the velocity field has only a spatial log-Lipschitz modulus of continuity, there is a loss in control of the modulus of continuity in time of the inverse flow map over that of the forward flow map. See (8.37) of \cite{MB2002}, for instance.)
} 

We now argue along the lines of pages 105-106 of \cite{C1998}.

We can write \cref{e:Yeps} as
\begin{align*}
	Y_0 \cdot \grad \eta_\eps
    	= Y_\eps \circ \eta_\eps.
\end{align*}
By \cref{e:CdotIneq}$_2$ and \cref{e:YRepsBounds}, then, $Y_0 \cdot \grad \eta_\eps$ is uniformly bounded in $L^\iny(0, T; C^\al(\R^2))$. But $C^\al(\R^2)$ is compactly embedded in $C^\beta(\R^2)$ for all $\beta < \al$ so a subsequence of $(Y_0 \cdot \grad \eta_\eps)$ converges in $L^\iny(0, T; C^\beta(\R^2))$ to some $f$ for all $\beta < \al$, and it is easy to see that $f \in L^\iny(0, T; C^\al(\R^2))$.

To show that $f = Y_0 \cdot \grad \eta$, we need only show convergence of $Y_0 \cdot \grad \eta_\eps \to Y_0 \cdot \grad \eta$ in some weaker sense. To do this, observe that 
\begin{align*}
    (Y_0 \cdot \grad \eta_\eps)^j
    	= Y_0 \cdot \grad \eta_\eps^j
        = \dv(\eta_\eps^j Y_0) - \eta_\eps^j \dv Y_0.
\end{align*}
But $\eta_\eps - \eta \to 0$ in $L^\iny(0, T; C^\beta(\R^2))$ for all $\beta < 1$ as we showed above so $\eta_\eps^j Y_0 - \eta^j Y_0 \to 0$ in $L^\iny(0, T; C^\al(\R^2))$. And, by assumption \cref{e:Assumption}$_4$, $\eta_\eps^j \dv Y_0 - \eta^j \dv Y_0 \to 0$ in $L^\iny(0, T; C^\al(\R^2))$. By the definition of negative \Holder spaces in \cref{D:HolderSpaces} it follows that $Y_0 \cdot \grad \eta_\eps \to Y_0 \cdot \grad \eta$ in $L^\iny(0, T; C^{\al - 1}(\R^2))$.
Hence, $f = Y_0 \cdot \grad \eta$, so we can conclude that $Y_0 \cdot \grad \eta \in L^\iny(0, T; C^\al(\R^2))$ and $Y_0 \cdot \grad \eta_\eps \to Y_0 \cdot \grad \eta$ in $L^\iny(0, T; C^\beta(\R^2))$ for all $\beta < \al$.

Then, since
$
    Y_\eps = (Y_0 \cdot \grad \eta_\eps) \circ \eta_\eps^{-1}
$
and
$
    Y = (Y_0 \cdot \grad \eta) \circ \eta^{-1}
$
(see \cref{e:Y,e:Yeps}),
we have,
\begin{align*}
    \begin{split}
    \norm{Y_\eps - Y}_{L^\iny}
        &\le
            \norm{(Y_0 \cdot \grad \eta_\eps) \circ \eta_\eps^{-1}
                - (Y_0 \cdot \grad \eta_\eps) \circ \eta^{-1}}_{L^\iny} \\
            &\qquad
            +
                \norm{(Y_0 \cdot \grad \eta_\eps) \circ \eta^{-1}
                - (Y_0 \cdot \grad \eta) \circ \eta^{-1}}_{L^\iny} \\
        &\le
            \norm{Y_0 \cdot \grad \eta_\eps}_{C^\al}
             \smallnorm{\eta_\eps^{-1} - \eta^{-1}}_{L^\iny}^\al
            +
                \norm{Y_0 \cdot \grad \eta_\eps
                - Y_0 \cdot \grad \eta}_{L^\iny} \\
		&\to 0 \text{ as } \eps \to 0,
    \end{split}
\end{align*}
where we used \cref{e:CdotIneq}$_1$.
Here the $L^\iny$ norms are over $[0, T] \times \R^2$ for any fixed $T > 0$. Arguing as for $Y_0 \cdot \grad \eta$, it also follows that $Y \in L^\iny(0, T; C^\al(\R^2))$ and that the bound on $Y(t)$ in \cref{e:MainBounds} holds. Then \cref{e:gradetaMainBound} follows from \cref{e:MainBounds} as in \cref{e:gradetaBound,e:gradetaInvBound}.

The proofs of \cref{e:YdivBound,e:omegaYdivBound}, which we suppress, follow much the same course as the bounds above.
Finally,
\begin{align*}
    (Y_\eps \cdot \grad u_\eps)^j
        = \dv(u_\eps^j Y_\eps) - u_\eps^j \dv Y_\eps,
\end{align*}
and given that we now know that $Y_\eps \to Y$ in $C^\beta(\R^2)$ for all $\beta < \al$ with $Y \in C^\al(\R^2)$, \cref{e:YgraduBound} can be proved much the way we proved the convergence of $Y_0 \cdot \grad \eta_\eps \to Y_0 \cdot \grad \eta$, above (taking advantage of \cref{e:YdivBound}).

This completes the proof of the first part of \cref{T:Serfati}.

\begin{remark}\label{R:WeakerAssumption}
    Had we only assumed that $\dv Y_0 \in C^{\al'}(\R^2)$ for some $\al' \in (0, \al]$
    then the argument above that showed
    $Y_0 \cdot \grad \eta_\eps \to Y_0 \cdot \grad \eta$ in
    $L^\iny(0, T; C^{\al - 1}(\R^2))$ would yield
    $Y_0 \cdot \grad \eta_\eps \to Y_0 \cdot \grad \eta$
    in $L^\iny(0, T; C^{\al' - 1}(\R^2))$. This would be sufficient to conclude
    that $f = Y_0 \cdot \grad \eta$, and the proof would proceed unchanged.
\end{remark}

\begin{lemma}\label{L:uUniformDecay}
    The approximating solutions, $u_\eps$, decay in space uniformly in time and in $\eps$.
\end{lemma}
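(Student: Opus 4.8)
The plan is to prove the decay in the form actually needed for the convergence argument: fixing $T > 0$, I will show that for every $\delta > 0$ there is an $R > 0$ so that $\abs{u_\eps(t, x)} < \delta$ whenever $\abs{x} \ge R$, uniformly over $t \in [0, T]$ and $\eps \in (0, 1]$. (Decay cannot be uniform over all of time, since the support of $\omega_\eps(t)$ is transported outward at a bounded speed, so near the bulk of the vorticity the velocity stays order one.) The three ingredients are: the uniform velocity bound \cref{e:uepsBound}, which gives $\norm{u_\eps}_{L^\iny} \le M := C\norm{\omega_0}_{L^1 \cap L^\iny}$ and hence the displacement bound $\abs{\eta_\eps(t, z) - z} \le MT$ for $t \le T$; the fact that $\eta_\eps(t, \cdot)$ is volume preserving; and the uniform-in-$\eps$ smallness of the $L^1$-tail of $\omega_{0, \eps} = \rho_\eps * \omega_0$. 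Throughout I write $u_\eps(t, x) = \int K(x - y) \omega_\eps(t, y)\,dy$ and split the integral over the far region $\smallset{\abs{x - y} > \abs{x}/2}$ and the near region $\smallset{\abs{x - y} \le \abs{x}/2}$.

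On the far region $\abs{K(x - y)} \le C/\abs{x}$, so that part contributes at most $(C/\abs{x})\norm{\omega_\eps(t)}_{L^1} \le (C/\abs{x})\norm{\omega_0}_{L^1}$ by \cref{e:omegaNormp}, which tends to $0$ as $\abs{x} \to \iny$. The near region is where the real work lies. There $\abs{y} \ge \abs{x} - \abs{x - y} \ge \abs{x}/2$, so the near contribution is bounded by $(\abs{K} * g)(x)$, where $g := \abs{\omega_\eps(t, \cdot)}\CharFunc_{\smallset{\abs{y} \ge \abs{x}/2}}$. Since $\norm{g}_{L^\iny} \le \norm{\omega_0}_{L^\iny}$ is fixed, the standard $L^1$--$L^\iny$ interpolation for the locally integrable two-dimensional kernel $K$ gives $(\abs{K} * g)(x) \le C \norm{g}_{L^\iny}^{1/2} \norm{g}_{L^1}^{1/2}$ (split $\int_{\abs{z} < s} + \int_{\abs{z} \ge s}$ using $\int_{\abs{z}<s}\abs{K} = s$ and $\abs{K(z)} \le 1/(2\pi s)$ on the complement, then optimize in $s$). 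It therefore suffices to show that $\norm{g}_{L^1} = \int_{\abs{y} \ge \abs{x}/2} \abs{\omega_\eps(t, y)}\,dy \to 0$ as $\abs{x} \to \iny$, uniformly in $t \in [0, T]$ and $\eps$.

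This last tail estimate is the crux, and the main obstacle is organizing it cleanly. Changing variables by the volume-preserving map $y = \eta_\eps(t, z)$ and using $\abs{\eta_\eps(t, z) - z} \le MT$, I get $\int_{\abs{y} \ge \abs{x}/2} \abs{\omega_\eps(t, y)}\,dy = \int_{\abs{\eta_\eps(t, z)} \ge \abs{x}/2} \abs{\omega_{0, \eps}(z)}\,dz \le \int_{\abs{z} \ge \abs{x}/2 - MT} \abs{\omega_{0, \eps}(z)}\,dz$. Finally, because $\rho_\eps$ is supported in a ball of radius $\le C\eps \le C$ for $\eps \le 1$, Fubini gives $\int_{\abs{z} \ge \rho} \abs{\rho_\eps * \omega_0} \le \int_{\abs{w} \ge \rho - C} \abs{\omega_0}$, and the right-hand side tends to $0$ as $\rho \to \iny$ since $\omega_0 \in L^1$, uniformly in $\eps \in (0, 1]$. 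Combining, the near tail is at most $\int_{\abs{w} \ge \abs{x}/2 - MT - C}\abs{\omega_0} \to 0$ as $\abs{x} \to \iny$, uniformly in $t$ and $\eps$, which closes the argument. The delicate point is precisely this transfer of the $L^1$-tail back to the fixed initial datum through the flow --- using only the bounded displacement and incompressibility --- combined with absorbing the local singularity of $K$ via the $L^1$--$L^\iny$ split; the far region and the velocity bound are routine.
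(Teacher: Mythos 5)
Your proof is correct and follows essentially the same route as the paper's: split the Biot--Savart integral by distance from $x$, bound the far part by $C\abs{x}^{-1}\norm{\omega_0}_{L^1}$, and control the near part by the $L^1$-tail of the vorticity, which is transferred back to $\omega_0$ via the bounded-speed, measure-preserving flow and the compact support of $\rho_\eps$. The only (immaterial) differences are that the paper uses a three-region split with H\"older in $L^p$, $p<2$, near the singularity where you use the optimized $L^1$--$L^\iny$ split of $\abs{K}*g$, and that you carry only the $L^1$-tail rather than the $(L^1\cap L^q)$-tail.
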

\begin{proof}
Let $\abs{x} > 2$ and $R = \abs{x}/2$.
From \cref{e:Eomega}$_2$, we can write
\begin{align*}
    u_\eps
        = \pr{\int_{B_1(x)} + \int_{B_R(x) \setminus B_1(x)} + \int_{B_R(x)^C}}
            K(x - y) \omega_\eps(y) \, dy.
\end{align*}
Fixing $p, q$ \Holder conjugate with $p \in [1, 2)$ and applying \Holder's inequality to each of the three terms above, we have
\begin{align}\label{e:uUniformDecay}
    \begin{split}
    \abs{u_\eps(t, x)}
        &\le \left(
            \norm{K(x - \cdot)}_{L^p(B_1(x))} \norm{\omega_\eps(t)}_{L^q(B_1(x))}
            \right. \\
        &\qquad
            \left.
            + \norm{K(x - \cdot)}_{L^\iny(B_R(x) \setminus B_1(x))}
                \norm{\omega_\eps(t)}_{L^1(B_R(x) \setminus B_1(x))}
            \right) \\
        &\qquad
            + \norm{K(x - \cdot)}_{L^\iny(B_R(x)^C)} \norm{\omega_\eps(t)}_{L^1(B_R(x)^C)} \\
        &\le C
            \norm{\omega_\eps(t)}_{(L^1 \cap L^q)(B_R(x))}
                    + C R^{-1} \norm{\omega_\eps(t)}_{L^1(\R^2)} \\
        &\le C
            \norm{\omega_\eps(t)}_{(L^1 \cap L^q)(B_{\abs{x}/2}(0)^C)}
                    + C \norm{\omega_0}_{L^1(\R^2)} \abs{x}^{-1}.
    \end{split}
\end{align}

We claim that for constants $C_1, C_2 > 0$,
\begin{align*}
    \norm{\omega_\eps(t)}_{(L^1 \cap L^q)(B_{\abs{x}/2}(0)^C)}
        &\le \norm{\omega_{0, \eps}}_{(L^1 \cap L^q)(B_{\abs{x}/2 - C_{1}T}(0)^C)} \\
        &\le \norm{\omega_0}_{(L^1 \cap L^q)(B_{\abs{x}/2 - C_1 T - C_2}(0)^C)}
\end{align*}
for all $\abs{x}/2 \ge C_1 T + C_2$. The first inequality holds because $\norm{u_\eps}_{L^\iny([0, T] \times \Omega)} \le C_1 = \norm{\omega_0}_{L^1 \cap L^\iny}$ by \cref{e:uepsBound} and the vorticity is transported by the flow map. Then since $\omega_{0, \eps} = \rho_\eps * \omega_0$ and $\rho_\eps$ is supported within a ball of radius $C_2 \eps$, and we have assumed that $\eps \le 1$, the second inequality holds as well. Thus, $\omega_\eps$ decays in space uniformly in time and in $\eps$. It then follows from \cref{e:uUniformDecay} that $u_\eps$ decays in space uniformly in time and in $\eps$. 
\end{proof}

\section{Proof of Serfati's Theorem Part II}
\label{S:MatrixA}
\noindent
In this section we prove the second part of \cref{T:Serfati}, finding a matrix $A\in C^\alpha(\R^2)$ such that $\nabla u - \omega A \in C^\alpha(\R^2)$.

We start with the expression
\begin{align}\label{e:graduepsAgain}
    \grad u_\eps(x)
        = \frac{\omega_\eps(x)}{2} \matrix{0 & -1}{1 & 0}
            + \PV \int \grad K(x - y) \omega_\eps(y) \, dy
\end{align}
for $\grad u_\eps$ given by \cref{L:graduExp}. This expression may not hold in the limit as $\eps \to 0$, but to remove from $\grad u$ the discontinuities inherent in $\omega$ we will clearly need, before taking $\eps$ to zero, to subtract from $\grad u_\eps$ its antisymmetric part, $\omega_\eps A^{(1)}$, where
\begin{align*}
    A^{(1)}
        :=  \frac{1}{2}
                \matrix{0 & -1}{1 & 0}.
\end{align*}
What remains, then, is the principal value integral in \cref{e:graduepsAgain}: the symmetric part of $\grad u_\eps$. We first show that away from $\eta_\eps(t, \Sigma)$, the symmetric part of $\grad u_\eps$ has $C^\al$ regularity.

Let $\delta_0$ be as in \cref{e:Assumption}$_2$ and $r > 0$ and the cutoff function $\chi$ be as in \cref{S:Refinedgradueps}. Then 
\begin{align*}
    (1 - \chi(&\eta^{-1}_\eps(x)))\PV \int \grad K(x - y) \omega_\eps(y) \, dy \\
        &= (1 - \chi(\eta^{-1}_\eps(x)))
        \PV \int_{\R^2} \grad (a_r K)(x - y)
            \omega_\eps(y) \, dy \\
        &\qquad
        + (1 - \chi(\eta^{-1}_\eps(x)))
            \int_{\R^2} \grad ((1 - a_r) K)(x - y)\omega_\eps(y) \, dy \\
    &=: \text{I}_1 +\text{I}_{2}.
\end{align*}

By (\ref{e:LHomo}) in \cref{L:SerfatiLemma2}  applied with the kernel $L_2$ of \cref{L:SerfatiKernels}, we have
$\text{I}_{1}\in C^\al(\R^2)$ with a $C^\al$ bound that is uniform in $\eps$. (Note that in the integrals above, $y$ remains bounded away from $\eta_\eps(\Sigma, t)$ on the support of $(1 - \chi(\eta^{-1}_\eps(x)))$.) By \cref{e:omegaCalLocalBound} we have the $\eps$-independent bound,
\begin{align*}
    \norm{I_1}_{C^{\al}}
        \le \norm{\omega_0}_{C^\al(\Sigma^C)}
                e^{e^{c_\al t}}.
\end{align*}

Since the kernel $\nabla\left((1 - a_r)K\right)$ is smooth and bounded, $\text{I}_2$ is smooth, lying in $C^k(\R^2)$ for all $k$ with a norm that depends only upon $\norm{\omega_\eps}_{L^1} = \norm{\omega_0}_{L^1}$. In particular, $\norm{\text{I}_2}_{C^\al} \le C(\omega_0)$ with a norm independent of $\eps$. (Cutting off with $\chi$ was not needed to conclude this.)

We will complete the proof by establishing the following lemma:
\begin{lemma}\label{L:AMatrixCentralLemma}
We have,
\begin{align*} 
    \chi(\eta^{-1}_\eps) \PV \int_{\R^2} \grad K(x - y) \omega_\eps(y) \, dy
        =  \omega_\eps B_\eps + D_\eps,
\end{align*}
where
\begin{align*}
    B_\eps
        = \frac{\chi(\eta^{-1}_\eps)}{2 \abs{Y_\eps}^2}
                \matrix{2 Y_\eps^1 Y_\eps^2 & (Y_\eps^2)^2 - (Y_\eps^1)^2}
                {(Y_\eps^2)^2 - (Y_\eps^1)^2 & -2 Y_\eps^1 Y_\eps^2}
\end{align*}
and $D_{\eps}$ is presented in the proof. $B_\eps$ and $D_\eps$ lie in $C^\al(\R^2)$ with
\begin{align}\label{e:BepsBound}
    \norm{B_\eps(t)}_{C^\al}, \, \norm{D_\eps(t)}_{C^\al}
            &\le C_\al e^{e^{c_\al t}}.
\end{align}
\end{lemma}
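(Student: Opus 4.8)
The plan is to leverage the exact splitting $\grad u_\eps = \frac{\omega_\eps}{2}\matrix{0 & -1}{1 & 0} + T_\eps$ of \cref{e:graduepsAgain}, where $T_\eps(x) := \PV\int_{\R^2}\grad K(x-y)\omega_\eps(y)\,dy$ is exactly the integral appearing in the lemma (so the left-hand side is $\chi(\eta^{-1}_\eps)T_\eps$). Since $u_\eps$ is smooth, $T_\eps$ is the symmetric part of $\grad u_\eps$: it is symmetric because $\grad K$ is symmetric off the diagonal (there $\Delta\Cal F = 0$), and trace-free because $\tr\grad u_\eps = \dv u_\eps = 0$ while the antisymmetric term is trace-free. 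A symmetric trace-free matrix $T_\eps = \matrix{p & q}{q & -p}$ has only two degrees of freedom, so it is completely determined by the single vector $T_\eps Y_\eps$ together with $\tr T_\eps = 0$ — this is precisely the reconstruction underlying \cref{L:SerfatiLemma1} with $M_1 = Y_\eps$. Solving $T_\eps Y_\eps = w$ for $(p,q)$ gives the explicit formulas $p = \abs{Y_\eps}^{-2}(Y_\eps^1 w_1 - Y_\eps^2 w_2)$ and $q = \abs{Y_\eps}^{-2}(Y_\eps^2 w_1 + Y_\eps^1 w_2)$.

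The crux is to identify $w = T_\eps Y_\eps$. Multiplying \cref{e:graduepsAgain} on the right by $Y_\eps$ and using $\matrix{0 & -1}{1 & 0}Y_\eps = Y_\eps^\perp$ gives
\begin{align*}
    T_\eps Y_\eps
        = (\grad u_\eps) Y_\eps - \frac{\omega_\eps}{2}Y_\eps^\perp
        = (Y_\eps\cdot\grad u_\eps) - \frac{\omega_\eps}{2}Y_\eps^\perp.
\end{align*}
The first term is the ``good'' part: by \cref{C:graduCor} it is the sum of a commutator integral and $K*\dv(\omega_\eps Y_\eps)$, both already controlled in $C^\al$, so $\norm{Y_\eps\cdot\grad u_\eps(t)}_{C^\al} \le C_\al e^{e^{c_\al t}}$ by \cref{e:YgraduepsBound,e:YRepsBounds}. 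The term $-\frac{\omega_\eps}{2}Y_\eps^\perp$ carries the factor $\omega_\eps$ and is the only ``singular'' part.

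Substituting this $w$ into the formulas for $p$ and $q$ and using linearity of the reconstruction splits $\chi(\eta^{-1}_\eps)T_\eps$ into two pieces. A direct computation (plugging $w_1 = \tfrac{\omega_\eps}{2}Y_\eps^2$, $w_2 = -\tfrac{\omega_\eps}{2}Y_\eps^1$ into $p,q$) shows the contribution of $-\frac{\omega_\eps}{2}Y_\eps^\perp$ is exactly $\omega_\eps B_\eps$ with $B_\eps$ as stated, while the contribution of $Y_\eps\cdot\grad u_\eps$ defines $D_\eps = \chi(\eta^{-1}_\eps)\matrix{p_g & q_g}{q_g & -p_g}$, where $p_g, q_g$ are the same formulas evaluated at $w = Y_\eps\cdot\grad u_\eps$. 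This gives the claimed identity and exhibits $D_\eps$ explicitly.

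It remains to establish \cref{e:BepsBound}, and this is where the real work lies. Both $B_\eps$ and $D_\eps$ are assembled from $\chi(\eta^{-1}_\eps)$, $Y_\eps$, $Y_\eps\cdot\grad u_\eps$, and the factor $\abs{Y_\eps}^{-2}$; the obstacle is that $\abs{Y_\eps}^{-2}$ need not even be bounded off $\supp\chi(\eta^{-1}_\eps)$, so it cannot be treated as a $C^\al$ multiplier naively. I would handle this exactly as in \cref{S:Refinedgradueps}: introduce the intermediate cutoff $b$ with $\supp b(\eta^{-1}_\eps) \supseteq \supp\chi(\eta^{-1}_\eps)$ and $\chi(\eta^{-1}_\eps) = \chi(\eta^{-1}_\eps)\,b(\eta^{-1}_\eps)$, and write $\chi(\eta^{-1}_\eps)\abs{Y_\eps}^{-2} = \chi(\eta^{-1}_\eps)\,b(\eta^{-1}_\eps)\abs{Y_\eps}^{-2}$. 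On $\supp b(\eta^{-1}_\eps)$ one has $\abs{Y_\eps} \ge c\,e^{-\int_0^t V_\eps(s)\,ds} \ge c\,e^{-e^{c_\al t}}$ by \cref{e:YepsBelow,e:Y0onsuppb} and the closing bound $\int_0^t V_\eps(s)\,ds \le e^{c_\al t}$ from \cref{S:ClosingWithGronwall}, so $b(\eta^{-1}_\eps)\abs{Y_\eps}^{-2}$ is a globally defined $C^\al$ function. Combining this with the $C^\al$ bounds $\norm{Y_\eps}_{C^\al}, \norm{Y_\eps\cdot\grad u_\eps}_{C^\al}\le C_\al e^{e^{c_\al t}}$, the bound on $\norm{\chi(\eta^{-1}_\eps)}_{C^\al}$ coming from \cref{e:gradetaInvBound} via \cref{e:CdotIneq}, and the $C^\al$-algebra inequality \cref{e:CdotIneq}, yields \cref{e:BepsBound}; the several factors of $e^{e^{c_\al t}}$ that arise are absorbed into a single $e^{e^{c_\al t}}$ at the cost of enlarging $c_\al$. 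The main difficulty throughout is precisely this control of $\abs{Y_\eps}^{-2}$ as a $C^\al$ multiplier near $\Sigma$, which is what forces the lower bound \cref{e:YepsBelow} and the intermediate cutoff into the argument.
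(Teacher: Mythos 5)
Your proposal is correct and follows essentially the same route as the paper: you identify the principal value integral as the symmetric trace‑free part of $\grad u_\eps$, reconstruct it from its action on $Y_\eps$ (which is exactly the content of the paper's componentwise computation of $a_\eps$ and $b_\eps$, i.e.\ the ``quadratic injection'' of $Y_\eps$), split $T_\eps Y_\eps = Y_\eps\cdot\grad u_\eps - \tfrac{\omega_\eps}{2}Y_\eps^\perp$ via \cref{C:graduCor} and \cref{L:graduYLikeLemma}, and control $\abs{Y_\eps}^{-2}$ on $\supp\chi(\eta_\eps^{-1})$ through \cref{e:YepsBelow} and \cref{e:Y0onsuppb}. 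Your packaging of the algebra as an explicit inversion of $T\mapsto TY_\eps$ on symmetric trace‑free matrices is a cleaner presentation of the same argument, and the resulting $B_\eps$ and $D_\eps$ coincide with the paper's.
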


Before proving \cref{L:AMatrixCentralLemma}, we show how it completes the proof of the second part of \cref{T:Serfati}. Let
\begin{align*}
    \ol{A}_\eps = A^{(1)} + B_\eps,
\end{align*}
which lies in $C^\al$ with a $C^\al$ norm that is uniform in $\eps$.
Then
\begin{align*}
    \grad u_\eps - \omega_\eps \ol{A}_\eps
        &= \PV \int \grad K(x - y) \omega_\eps(y) \, dy
            - \omega_\eps B_\eps \\
        &= \text{I}_1 + \text{I}_2
            + \chi(\eta^{-1}_\eps) \PV \int_{\R^2} \grad K(x - y) \omega_\eps(y) \, dy
            - \omega_\eps B_\eps \\
        &= \text{I}_1 + \text{I}_2
            + D_\eps
\end{align*}
lies in $C^\al$ with a $C^\al$ norm that is uniform in $\eps$. The final equality is where we used \cref{L:AMatrixCentralLemma}.

This shows that $\ol{A}_\eps$ is a candidate for our matrix $A$ (in the limit), but for aesthetic reasons we prefer to apply the cutoff function $\chi$ to $A^{(1)}$ as well, using 
\[
A_\eps = \chi(\eta^{-1}_\eps) A^{(1)} + B_\eps.
\]
This is valid, since the bound in \cref{e:omegaCalLocalBound} was established without using the matrix $A$, so we know at this point that $(1 - \chi(\eta^{-1}_\eps)) \omega_\eps$ lies in $C^\al$ with a $C^\al$ norm that is uniform in $\eps$. A simple calculation shows that
\begin{align*}
    A_\eps
        = \frac{\chi(\eta^{-1}_\eps)}{\abs{Y_\eps}^2}
                \matrix
                    {Y_\eps^1 Y_\eps^2 & - (Y_\eps^1)^2}
                    {(Y_\eps^2)^2 & -Y_\eps^1 Y_\eps^2}.
\end{align*}
The bounds in \cref{e:ABound} then follow from \cref{e:BepsBound}, the bounds on I$_1$ and I$_2$, above, and \cref{L:ProductCalBound}.

An examination of each of the components of $D_\eps$ shows, arguing as in \cref{S:Convergence}, that for some subsequence, $(\eps_k)_{k = 1}^\iny$, $D_{\eps_k} \to D$ in $L^\iny(0, T; C^\beta)$ for all $\beta < \al$ and that $D$ lies in $L^\iny(0, T; C^\al)$. Similarly, $A_{\eps_k}$ converges in $C^\beta$ for all $\beta < \al$ to
\begin{align}\label{e:AExplicit}
    A
        = \frac{\chi(\eta^{-1})}{\abs{Y}^2}
                \matrix
                    {Y^1 Y^2 & - (Y^1)^2}
                    {(Y^2)^2 & -Y^1 Y^2},
\end{align}
which lies in $C^\al$, and also in the limit \cref{e:ABound} holds.

\begin{proof}[\textbf{Proof of \cref{L:AMatrixCentralLemma}}]
The proof comes down to understanding the regularity of the principal value integral in \cref{e:graduepsAgain} on the support of $\chi(\eta_\eps^{-1})$. To do this, we forcefully inject into this integral the vector field $Y_\eps$, which characterizes the discontinuities in the vorticity field. This approach has already been used in our application of \cref{C:graduCor} in \cref{S:EstYR}. Now, however, the injection of $Y_\eps$ will be deeper. Loosely speaking, $Y_\eps$ was injected linearly in \cref{C:graduCor}; now, we will inject $Y_\eps$ quadratically.

Using the notation in \cref{D:PValg}, we can write
\begin{align}\label{e:pvInt}
    \PV \int \grad K(x - y) \omega_\eps(y) \, dy
        = \matrix
            {\prt_1 K^1 \starp \omega_\eps &
            \prt_1 K^2 \starp \omega_\eps}
            {\prt_2 K^1 \starp \omega_\eps &
            \prt_2 K^2 \starp \omega_\eps}.
\end{align}

Now, $\dv K = 0$ so 
\[
\prt_1 K^1 \starp \omega_\eps  = - \prt_2 K^2 \starp \omega_\eps.
\]
 Also,
\[
\curl K = - \dv K^\perp
    = - \dv (\grad^\perp \Cal{F})^\perp
    = \Delta \Cal{F} = \delta.
\]
But since the  $\starp$ operator avoids the origin in the integrand,
    we have 
\[
    \prt_1 K^2 \starp \omega_\eps
    = \prt_2 K^1 \starp \omega_\eps.
\]    
Thus, the $\PV$ integral in \cref{e:pvInt} is, as we know, the symmetric part of $\grad u_\eps$. Moreover,
\begin{align*}
        \chi(\eta^{-1}_\eps)
            \PV \int \grad K(x - y) \omega_\eps(y) \, dy
        &= \frac{\chi(\eta^{-1}_\eps) }{\abs{Y_\eps}^2}
            \matrix{a_{\eps} & b_{\eps}}{b_{\eps} & -a_{\eps}},
\end{align*}
where
    \begin{align*}
        a_{\eps}
            &:= \abs{Y_\eps}^2 \prt_1 K^1 \starp \omega_\eps
            = (Y_\eps^1)^2 \prt_1 K^1 \starp \omega_\eps
                + (Y_\eps^2)^2 \prt_1 K^1 \starp \omega_\eps \\
            &= Y_\eps^1 \brac{Y_\eps^1 \prt_1 K^1 \starp \omega_\eps
                    + Y_\eps^2 \prt_1 K^2 \starp \omega_\eps}
                + Y_\eps^2 \brac{Y_\eps^2 \prt_1 K^1 \starp \omega_\eps
                    - Y_\eps^1 \prt_1 K^2 \starp \omega_\eps} \\
            &= Y_\eps^1 \brac{Y_\eps^1 \prt_1 K^1 \starp \omega_\eps
                    + Y_\eps^2 \prt_2 K^1 \starp \omega_\eps}
                - Y_\eps^2 \brac{Y_\eps^2 \prt_2 K^2
                        \starp \omega_\eps
                    + Y_\eps^1 \prt_1 K^2
                        \starp \omega_\eps}
    \end{align*}
and
    \begin{align*}
        b_{\eps}
            &:=
            \abs{Y_\eps}^2 \prt_1 K^2 \starp \omega_\eps
            = (Y_\eps^1)^2 \prt_1 K^2 \starp \omega_\eps
                + (Y_\eps^2)^2 \prt_1 K^2 \starp \omega_\eps \\
            &= Y_\eps^1 \brac{Y_\eps^1 \prt_1 K^2 \starp \omega_\eps
                    + Y_\eps^2 \prt_2 K^2 \starp \omega_\eps}
                + Y_\eps^2 \brac{Y_\eps^2 \prt_1 K^2 \starp \omega_\eps
                    - Y_\eps^1 \prt_2 K^2 \starp \omega_\eps} \\
            &= Y_\eps^1 \brac{Y_\eps^1 \prt_1 K^2 \starp \omega_\eps
                    + Y_\eps^2 \prt_2 K^2 \starp \omega_\eps}
                + Y_\eps^2 \brac{Y_\eps^2 \prt_2 K^1 \starp \omega_\eps
                    + Y_\eps^1 \prt_1 K^1 \starp \omega_\eps}.
    \end{align*}
 
Observe that we can write
\begin{align*}
    a_{\eps}
        &= Y_\eps \cdot
            \matrix{Y_\eps \cdot (\grad K^1 \starp \omega_\eps)}
            {- Y_\eps \cdot (\grad K^2 \starp \omega_\eps)},
            \qquad
    b_{\eps}
        = Y_\eps \cdot
            \matrix{Y_\eps \cdot (\grad K^2 \starp \omega_\eps)}
            {Y_\eps \cdot (\grad K^1 \starp \omega_\eps)}.
\end{align*}
Defining, for vector-valued functions $v$ and $w$,
    \begin{align*}
        v \starpdot w
            := v^j \starp w^j,
    \end{align*}
we have,
    \begin{align*}
        Y_\eps \cdot &(\grad K^j \starp \omega_\eps)
            =\grad K^j \starpdot (\omega_\eps Y_\eps)+\left[ (\grad K^j \starp \omega_\eps) \cdot Y_\eps
                - \grad K^j \starpdot (\omega_\eps Y_\eps)\right].
    \end{align*}
\cref{L:graduYLikeLemma} gives
\begin{align*}
    Y_\eps\cdot \nabla K^{1}\ast \omega_\eps
        &= \frac{1}{2}\omega Y^{2} +K^{1}\ast \dv(\omega Y) 
                + \left[(\grad K^1 \starp \omega_\eps) \cdot Y_\eps
                - \grad K^1 \starpdot (\omega_\eps Y_\eps)\right], \\
    Y_\eps\cdot \nabla K^{2}\ast \omega_\eps
        &= -\frac{1}{2}\omega Y^{1} +K^{2}\ast \dv(\omega Y) 
                + \left[(\grad K^2 \starp \omega_\eps) \cdot Y_\eps
                - \grad K^2 \starpdot (\omega_\eps Y_\eps)\right].
\end{align*}
Therefore, 
\begin{align*}
    a_\eps
        &=Y^{1}_\eps ( Y_\eps\cdot \nabla K^{1}\ast \omega_\eps)
            - Y^{2}_\eps ( Y_\eps\cdot \nabla K^{2}\ast \omega_\eps)\\
        &= \omega_\eps  Y^{1}_\eps Y^{2}_\eps  \\
            &\qquad
            + Y^{1}_\eps  \left(K^{1}\ast \dv(\omega Y) 
            + (\grad K^1 \starp \omega_\eps) \cdot Y_\eps
            - \grad K^1 \starpdot (\omega_\eps Y_\eps)\right)\\
            &\qquad
            -Y^{2}_\eps  \left(K^{2}\ast \dv(\omega Y) 
            + (\grad K^2 \starp \omega_\eps) \cdot Y_\eps
            - \grad K^2 \starpdot (\omega_\eps Y_\eps)\right) \\
        & =: \omega_\eps Y^{1}_\eps Y^{2}_\eps + \bar{a}_{\eps}      
\end{align*}
and
\begin{align*}
    b_\eps
        &= Y^{1}_\eps
            (Y_\eps\cdot \nabla K^{2}\ast \omega_\eps)
            + Y^{2}_\eps (Y_\eps\cdot \nabla K^{1}\ast \omega_\eps) \\
        &= \frac{1}{2}\omega_{\eps} \left[(Y^{2}_{\eps})^{2} -(Y^{1}_{\eps})^{2}\right] \\
            &\qquad
            + Y^{1}_\eps  \left(K^{2}\ast \dv(\omega Y) 
            + (\grad K^2 \starp \omega_\eps) \cdot Y_\eps
            - \grad K^2 \starpdot (\omega_\eps Y_\eps)\right) \\
            &\qquad
                + Y^{2}_\eps \left(K^{1}\ast \dv(\omega Y) 
                + (\grad K^1 \starp \omega_\eps) \cdot Y_\eps
                - \grad K^1 \starpdot (\omega_\eps Y_\eps)\right) \\
        &=: \frac{1}{2}\omega_{\eps} \left[(Y^{2}_{\eps})^{2}
            - (Y^{1}_{\eps})^{2}\right] + \bar{b}_{\eps}.   
\end{align*}
This gives $B_{\eps}$ as stated above and
\begin{align*}
    D_\eps
        = \frac{\chi(\eta^{-1}_\eps)}{\abs{Y_\eps}^2}
                \matrix{\bar{a}_{\eps}  & \bar{b}_{\eps} }
                {\bar{b}_{\eps}  & -\bar{a}_{\eps} }.
\end{align*} 
Now,
    \begin{align*}
        (\grad K^j \starp &\omega_\eps) \cdot Y_\eps
                - \grad K^j \starpdot (\omega_\eps Y_\eps)
            = \PV \int \grad K^j(x - y) \cdot
                \brac{Y_\eps(x) - Y_\eps(y)} \omega_\eps(y) \, dy.
    \end{align*}
Applying \cref{L:SerfatiLemma2Inf} with the kernel $L_3$ of \cref{L:SerfatiKernels} and using \cref{e:graduepsLinffBound} and \cref{e:YRepsBounds}$_1$ gives
\begin{align*}
    &\norm{(\grad K^j \starp \omega_\eps) \cdot Y_\eps
            - \grad K^j \starpdot (\omega_\eps Y_\eps)}_{C^{\alpha}}
        \le C \norm{Y_\eps(t)}_{C^\alpha} V_\eps(t)
        \le C_\al e^{c_2 e^{c_1 t}}.
\end{align*}
This with (\ref{e:omegaYdivBound}) gives the the bound on $D_\eps$ in \cref{e:BepsBound}.

For the regularity of $B_\eps$, first note that
\begin{align*}
    \smallnorm{\abs{Y_\eps}^{-1}}_{\dot{C}^\al(\supp \eta^{-1})}
        \le \frac{\norm{Y_\eps}_{\dot{C}^\al}}
            {\norm{Y_\eps}_{\inf(\supp \chi(\eta^{-1}))}^2}
        \le C_\al e^{e^{c_\al t}}
\end{align*}
by \cref{e:MainBounds,e:YepsBelow,e:Y0onsuppb}. The bound on $B_\eps$ in \cref{e:BepsBound} then follows from \cref{e:CdotIneq}$_2$ and \cref{L:ProductCalBound}. 
\end{proof}

We used the following elementary lemma above:
\begin{lemma}\label{L:ProductCalBound}
    Assume that $\varphi \in C_C^\iny(\R^2)$ takes values in $[0, 1]$
    and $f \in C^\al(\R^2)$.
    Then
    \begin{align*}
        \norm{\varphi f}_{C^\al(\R^2)}
            \le \norm{f}_{L^\iny(\supp \varphi)}
                + \norm{\varphi}_{\dot{C}^\al} \norm{f}_{C^\al(\supp \varphi)}.
    \end{align*}
\end{lemma}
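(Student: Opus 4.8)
The plan is to split $\norm{\varphi f}_{C^\al(\R^2)} = \norm{\varphi f}_{L^\iny(\R^2)} + \norm{\varphi f}_{\dot{C}^\al(\R^2)}$ and estimate the two pieces separately, exploiting the two structural facts we are handed: $\varphi f$ vanishes identically off $\supp \varphi$, and $0 \le \varphi \le 1$. The $L^\iny$ piece is immediate: since $\varphi f$ is supported in $\supp \varphi$ and $\abs{\varphi} \le 1$ there, $\norm{\varphi f}_{L^\iny(\R^2)} \le \norm{f}_{L^\iny(\supp \varphi)}$, which is exactly the first term on the right-hand side. So all the work is in the seminorm.

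For the seminorm I would fix $x \ne y$ and bound the difference quotient $\abs{\varphi(x)f(x) - \varphi(y)f(y)}/\abs{x-y}^\al$ by a case analysis on whether $x,y$ lie in $\supp \varphi$, using the product identity $\varphi(x)f(x) - \varphi(y)f(y) = (\varphi(x)-\varphi(y))f(x) + \varphi(y)(f(x)-f(y))$. If both points lie off $\supp \varphi$ the quotient is zero. If both lie in $\supp \varphi$, I bound the first term by $\norm{f}_{L^\iny(\supp \varphi)}\norm{\varphi}_{\dot{C}^\al}$ (using $\abs{f(x)} \le \norm{f}_{L^\iny(\supp \varphi)}$) and the second by $\norm{f}_{\dot{C}^\al(\supp \varphi)}$ (using $\abs{\varphi(y)} \le 1$). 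The one case needing care — and the only genuine obstacle — is the mixed case, say $x \in \supp \varphi$ and $y \notin \supp \varphi$, because then $f(y)$ and $f(x)-f(y)$ are not controlled by norms taken over $\supp \varphi$. This is resolved precisely by the observation that $\varphi(y) = 0$: the second term drops out, $\varphi(x)-\varphi(y) = \varphi(x)$, and the quotient reduces to $\abs{f(x)}\,\abs{\varphi(x)-\varphi(y)}/\abs{x-y}^\al \le \norm{f}_{L^\iny(\supp \varphi)}\norm{\varphi}_{\dot{C}^\al}$, with $f$ evaluated only at the in-support point $x$. The symmetric subcase is handled by using the analogous form $(\varphi(y)-\varphi(x))f(y) + \varphi(x)(f(y)-f(x))$.

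Collecting the three cases yields $\norm{\varphi f}_{\dot{C}^\al} \le \norm{\varphi}_{\dot{C}^\al}\norm{f}_{L^\iny(\supp \varphi)} + \norm{f}_{\dot{C}^\al(\supp \varphi)}$, and adding the $L^\iny$ bound and consolidating $\norm{f}_{L^\iny(\supp \varphi)} + \norm{f}_{\dot{C}^\al(\supp \varphi)} = \norm{f}_{C^\al(\supp \varphi)}$ produces the stated inequality. I would flag one bookkeeping subtlety: the natural decomposition attaches the coefficient $\sup\abs{\varphi} \le 1$ (not $\norm{\varphi}_{\dot{C}^\al}$) to the $\norm{f}_{\dot{C}^\al(\supp \varphi)}$ term, so matching the right-hand side exactly uses that the fixed cutoffs $\varphi = \chi$ appearing in the applications have $\norm{\varphi}_{\dot{C}^\al} \ge 1$ (they pass from $1$ to $0$ across a region of fixed width); with that understanding the collection step is routine and the lemma follows. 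The whole argument is elementary, needing no covering or mollification — only the product rule and the vanishing of $\varphi$ off its support.
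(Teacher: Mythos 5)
Your proof is correct and is essentially the paper's own argument: the paper likewise reduces the homogeneous seminorm to pairs of points in $\supp \varphi$ and runs the same case analysis on whether $x, y$ lie in the support, with the mixed case handled exactly as you do, via $\varphi(y) = 0$. The bookkeeping subtlety you flag is genuine but is a feature of the lemma's statement rather than a defect of your argument --- the paper's own computation also attaches the coefficient $\sup \abs{\varphi} \le 1$ (not $\norm{\varphi}_{\dot{C}^\al}$) to the $\norm{f}_{\dot{C}^\al(\supp \varphi)}$ term, so the inequality as written is only obtained when $\norm{\varphi}_{\dot{C}^\al} \ge 1$, which holds for the cutoffs used in the applications and is in any case immaterial there since only the qualitative $C^\al$ bound is needed.
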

\OptionalDetails{
\begin{proof}
    Clearly, $\norm{\varphi f}_{L^\iny} \le \norm{f}_{L^\iny(\supp \varphi)}$, so it remains
    only to bound the homogeneous norm. But the bound on this follows from the
    observation that for any $g \in C^\al(\R^2)$,
    \begin{align*}
        \norm{g}_{C^\al(\R^2)}
            = \norm{g}_{C^\al(\supp \varphi)}.
    \end{align*}
    \Ignore { 
    
    Let $K = \supp \varphi$ and for $x, y \in \R^2$
    define
    \begin{align*}
        \Delta(x, y)
            := \frac{\abs{(\varphi f)(x) - (\varphi f)(y)}}{\abs{x - y}^\al}.
    \end{align*}
    There are four cases.
    
    If $x, y \in K$ then
    \begin{align*}
        \Delta(x, y)
            &\le \frac{\abs{\varphi(x) (f(x) - f(y))}}{\abs{x - y}^\al} 
                + \frac{\abs{f(y) (\varphi(x) - \varphi(y))}}{\abs{x - y}^\al}
            \le \norm{f}_{\dot{C}^\al(K)}
                + \norm{\varphi}_{\dot{C}^\al} \norm{f}_{L^\iny(K)}.
    \end{align*}
    
    If $x \notin K$, $y \notin K$ then $\Delta(x, y) = 0$.
    
    If $x \in K$, $y \notin K$ then
    \begin{align*}
        \Delta(x, y)
            &= \frac{\abs{\varphi(x) f(x)}}{\abs{x - y}^\al} 
            = \frac{\abs{(\varphi(x) - \varphi(y))f(x)}}{\abs{x - y}^\al}
            \le \norm{\varphi}_{\dot{C}^\al} \norm{f}_{L^\iny(K)}
    \end{align*}
    and the same inequality holds for $x \notin K$, $y \in K$.
    
    Together, these bounds yield the stated inequality.
    } 
\end{proof}
} 

%
%
\newcommand{\RadiallySymmetricExample}
{

Suppose that $\omega_0$ is radially symmetric, so that $\omega_0(x) = g(\abs{x})$ for some measurable function, $g$. Then the solution to the Euler equations is stationary, with
\begin{align*}
    u(x)
        &= \pr{-\frac{x_2}{r^2} \intg, \,
                \frac{x_2}{r^2} \intg},
\end{align*}
where $r = \abs{x}$. Then
\begingroup
\allowdisplaybreaks
\begin{align*}
    \grad u(x)
        &= \matrix{
            \displaystyle
            \frac{2 x_1 x_2}{r^4} \intg - \frac{x_1 x_2}{r^2} g(r) &
            \displaystyle
            \pr{\frac{2 x_2^2}{r^4} - \frac{1}{r^2}} \intg - \frac{x_2^2}{r^2} g(r) \\}
            {\displaystyle
            \pr{\frac{-2 x_1^2}{r^4} + \frac{1}{r^2}} \intg + \frac{x_1^2}{r^2} g(r) &
            \displaystyle
            \frac{-2 x_1 x_2}{r^4} \intg + \frac{x_1 x_2}{r^2} g(r)} \\
        &= \frac{1}{r^4} \intg
            \matrix{
            2 x_1 x_2 &
            x_2^2 - x_1^2 \\}
            {x_2^2 - x_1^2 &
            -2 x_1 x_2}
            + \frac{g(r)}{r^2}
                \matrix{-x_1 x_2 & -x_2^2}{x_1^2 & x_1 x_2}.
\end{align*}
\endgroup

For simplicity, add the assumption that $g(r) = 0$ on $B_\delta(0)$ for some $\delta > 0$. Then, choosing
\begin{align*}
    Y
        = (1 - a_{\delta/4}) \e_\theta
        = (1 - a_{\delta/4}) \pr{\frac{-x_2}{r}, \frac{x_1}{r}},
\end{align*}
with $a$ as in \cref{D:Radial}, and letting $\chi(\abs{x}) = 1 - a_{\delta/2}(x)$, \cref{e:AExplicit} gives
\begin{align*}
    A(x)
        = \frac{\chi(r)}{r^2}
                \matrix
                    {-x_1 x_2 & - x_2^2}
                    {x_1^2 & x_1 x_2}.
\end{align*}

We see, then, that
\begin{align*}
    \grad u(x) - \omega(x) A(x)
        &= \frac{1}{r^4} \intg
            \matrix{
            2 x_1 x_2 &
            x_2^2 - x_1^2 \\}
            {x_2^2 - x_1^2 &
            -2 x_1 x_2} \\
        &= \frac{1}{r^2} \intg
            \matrix{2 \cos \theta \sin \theta & \sin^2 \theta - \cos^2 \theta}
                        {\sin^2 \theta - \cos^2 \theta & - 2 \cos \theta \sin}
\end{align*}
in polar coordinates. This is $C^\iny$ in $\theta$ and is as smooth in $r$ as $\rho$ allows, but is in any case always at least Lipschitz continuous. (Across the boundary of a classical vortex patch, for instance, it is only Lipschitz continuous.)
} 

\ifbool{ForSubmission} {
    We now give a simple example where $A$ can be explicitly calculated.
        
    \RadiallySymmetricExample
    
As simple as this example is, it provides useful insight into how $\omega(t)$ and $A(t)$ combine to cancel the singularities in $\grad u$. In particular, it highlights how the matrix $A$ has no direct dependence on the magnitude of $\omega$, only upon its irregularities as described by $Y$. So the same pattern of irregularities in the \textit{initial} vorticity would yield the same $A(0)$. For a nonstationary solution $A(t)$ would, of course, evolve in a way that depends upon the magnitude of $\omega$ at time zero.

    }
    {
    }

\section{Persistence of regularity of a vortex patch boundary}\label{S:ProofOfVortexPatch}
\noindent We now prove \cref{T:VortexPatch} using \cref{T:Serfati}, first  reformulating the vortex patch problem using level sets as in \cite{ConstantinBertozzi1993}.  We take $\displaystyle \phi_0 \in C^{1,\alpha}(\R^2)$ such that 
\begin{align*} 
    &\phi_0(x) > 0 \quad \text{in $\Omega$}, \\
    &\phi_0(x) = 0 \quad \text{on $\prt \Omega$ \ \ and}
        \ \  \inf_{x\in \partial \Omega} \abs{\nabla \phi_0(x)}\ge 2c > 0.
\end{align*}
We will be applying \cref{e:LevelCurveSmoothness} of \cref{T:SerfatiAdditional} with the closed set $\Sigma = \prt \Omega$.

Let $Y_0 = \grad^\perp \varphi_0 \in C^\al$ and note that $Y_0$ is tangential to $\prt \Omega$ and $\abs{Y_0} \ge c > 0$ on $\Ndelta{\prt \Omega}{\delta_0}$ for some $\delta_0 > 0$, since $\prt \Omega$ is compact. Also, formally, 
\[
\dv (\omega_0 Y_0) = \omega_0 \dv Y_0 + \grad \omega_0 \cdot Y_0 = 0 + \grad \omega_0 \cdot Y_0 = 0.
\]
More precisely, let $\varphi$ be any test function in $\Cal{D}(\R^2)$. Then
\begin{align*} 
    (\dv (\omega_0 Y_0), \varphi)
        &= - (\omega_0 Y_0, \grad \varphi)
        = - \int_\Omega \omega_0 Y_0 \cdot \grad \varphi
        = - \int_\Omega Y_0 \cdot \grad \varphi
        = \int_\Omega \dv Y_0 \, \varphi
        = 0.
\end{align*}
The integration by parts is valid since $Y_0$ lies in $C^\al(\Omega) \subseteq L^2(\Omega)$ and $\dv Y_0 = 0$ also lies in $L^2(\Omega)$ (see, for instance, Theorem I.1.2 of \cite{T2001}), the boundary integral vanishing since $Y_0 \cdot \n = 0$.

Let $\phi(t)$ be $\phi_0$ transported by the flow map, so that 
\[
\prt_t \phi + u \cdot \grad \phi = 0.
\]
Applying $\grad^\perp$ to both sides of this equation gives
\begin{align*} 
    \prt_t \grad^\perp \phi + u \cdot \nabla \grad^\perp \phi
        = \grad^\perp \phi \cdot \nabla u.
\end{align*}
Comparing this to \cref{e:YTransportAlmost}, we see that $Y(t) = \grad^\perp \phi(t)$. Since $Y(t) \in C^\al$ by \cref{T:Serfati} applied with $\Sigma = \prt \Omega$, we have $\phi(t) \in C^{1 + \al}$.
Then, because $\prt \Omega_t$ remains a level set of $\phi(t)$, $Y(t) = \grad^\perp \phi(t)$ is tangential to $\prt \Omega$. Hence, the boundary of the vortex patch remains in $C^{1 + \al}$. (This argument also proves \cref{e:LevelCurveSmoothness}.)

\Ignore{ 
Then, the vortex patch problem is equivalent to seeking a solution $\phi(t, x)$ of
\begin{subequations} 
\begin{align}
      & \phi_t + u\cdot \nabla \phi = 0, \\
    &u(t, x)
        = \frac{1}{2\pi}\int_{\Omega_t}\nabla^\perp \mathcal{F}(x - y) \, dy, \quad
    \Omega_t
        := \left\{x \colon \phi(t, x)>0 \right\}. 
\end{align}
\end{subequations}
We note that $\omega_0 \in C^\alpha(\R^2\setminus \prt \Omega)$ and $\dv (\omega_0 Y_0) = 0$.  Moreover, $Y_0 = (-\prt_2 \phi_0 , \prt_1 \phi_0 )$ is a $C^\alpha$ vector field in $\R^2$ such that $\abs{Y_0}\ge c > 0$ on $\prt \Omega$. Therefore, there exists a unique solution $u$ satisfying the bound in \cref{S:EstimationGraduAndY}. We next show $\gamma(t) = \eta(t, \gamma_0 ) \in C^{1 + \alpha}(\mathbb{S}^1)$ which is equivalent to showing \cite{ConstantinBertozzi1993} 
\begin{align*}
    W(t, \eta(t, x)) := \nabla^\perp \phi (t, \eta(t,x)) \in C^\alpha(\R^2).
\end{align*}
Since $W$ satisfies the equation 
\[
W_{t}+u\cdot \nabla W=\nabla u W
\]
we obtain the same regularity of $W$ as $Y$:   
\begin{align*}
    W(t, \eta(t, \cdot))\in C^\alpha(\R^2)
\end{align*}
which completes the proof.
} 

%
%
\section{An extension of Serfati's result}\label{S:SerfatiGen}

\noindent It is actually slightly easier to prove a more general form of \cref{T:Serfati}, making assumptions on the initial data using a family of vector fields, much as Chemin does in \cite{C1998}. At the expense of a little extra bookkeeping, the decomposition of the vorticity into ``good'' and ``bad'' parts is no longer needed, and all the associated estimates in \cref{S:Refinedgradueps} go away.

We start with a family $Y_0 = (Y_0^{(\la)})_{\la \in \Lambda}$ of vector fields in $C^\al(\R^2)$ and define, for any $s \in \R$,
\begin{align*}
    \norm{f(Y_0)}_{C^s}
        &:= \sup_{\la \in \Lambda} \norm{f \pr{Y_0^{(\la)}}}_{C^s(\R^2)}, \\
    I(Y_0)
        &:= \inf_{x \in \R^2} \sup_{\la \in \Lambda} \abs{Y_0^{(\la)}(x)}.
\end{align*}
Here, $f$ is any function on vector fields (such as the divergence) and we define 
\[
    f(Y_0) = \pr{f(Y_0^{(\la)})}_{\la \in \Lambda}.
\]
When $\norm{f(Y_0)}_{C^s} < \iny$ we say that $f(Y_0) \in C^s$. 

Then, in place of \cref{e:Assumption}, we assume that
\begin{align}\label{e:AssumptionGen}
	\left\{
	\begin{array}{l}
		\halfspacer \omega_0 \in (L^1 \cap L^\iny)(\R^2), \\
		\halfspacer \norm{Y_0}_{C^\al} < \iny, \quad
          I(Y_0)
            > 0, \\
		\halfspacer \dv(\omega_0 Y_0) \in C^{\al - 1}, \\
		\dv Y_0 \in C^\al.
	\end{array}
	\right.
\end{align}

We define the pushforward of the family, $Y_0$, by
\begin{align}\label{e:YFamily}
    Y(t) = (Y^{(\la)}(t))_{\la \in \Lambda}, \quad
        Y^{(\la)}(t, \eta(t, x)) := (Y_0^{(\la)}(x) \cdot \nabla) \eta(t, x).
\end{align}

Then the first part of \cref{T:Serfati} holds with these new assumptions, reproducing, as we show in the next section, the result of Chemin in \cite{C1998}:

\begin{theorem}\label{T:SerfatiGen}
Suppose that $\omega_0$ is such that the conditions in \cref{e:AssumptionGen} are satisfied for some family of vector fields $Y_0$. For the unique solution $\omega$ in $L^\iny(\R; (L^1 \cap L^\iny)(\R^2))$ to the Euler equations in vorticity formulation \cref{e:Eomega}, the bound in \cref{e:MainBounds} (with $Y$ as in \cref{e:YFamily}) holds,
as do \crefrange{e:YdivBound}{e:gradetaMainBound} and \cref{e:LevelCurveSmoothness}.
\end{theorem}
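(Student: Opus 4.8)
The plan is to rerun the proof of the first part of \cref{T:Serfati} from \crefrange{S:Transport}{S:Convergence}, now carrying the family index $\la$ through every estimate and replacing each norm of $Y_0$ or $Y_\eps$ by the corresponding supremum over $\la$, which is finite by \cref{e:AssumptionGen}$_2$. For each $\la$ I define $Y_\eps^{(\la)}$ via \cref{e:Yeps} and the companion field $R_\eps^{(\la)}$ exactly as in \cref{e:R0Def}, using $\dv(\omega_0 Y_0^{(\la)}) \in C^{\al-1}$ from \cref{e:AssumptionGen}$_3$ and $\dv Y_0^{(\la)} \in C^\al$ from \cref{e:AssumptionGen}$_4$. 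Then \cref{L:R0Decomp,L:RegularitywY} and the transport estimates of \cref{S:EstYR} hold for each $\la$ with constants controlled by $\norm{Y_0}_{C^\al}$, hence uniformly in $\la$; in particular \cref{e:YepsLinfAbove,e:RepsLinfAbove,e:YRCalphaBound} hold with right-hand sides independent of $\la$.

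The one genuinely new ingredient is the refined estimate of $\nabla u_\eps$ in \cref{S:Refinedgradueps}. Here the decomposition of the vorticity into ``good'' and ``bad'' parts, together with the cutoffs $\chi$ and $b$, is simply deleted: since $I(Y_0) > 0$, every point carries its own good direction, so the linear-algebra argument that previously applied only near $\Sigma$ now applies everywhere at once. Concretely, I split off the easy far piece $\grad((1-a_r)K)\starp\omega_\eps$ as in \cref{e:pvgradKomegaFirstSplit}, and for the localized singular integral $\grad(a_r K)\starp\omega_\eps(t,x)$ I proceed pointwise: for fixed $x$, choose $\la = \la(x)$ with $\abs{Y_0^{(\la)}(\eta_\eps^{-1}(t,x))} \ge I(Y_0)/2$, build the matrix $M$ of \cref{S:Refinedgradueps} with first column $Y_\eps^{(\la)}$, and apply \cref{L:SerfatiLemma1} to $B = \grad\brac{\mu_{rh}\grad\Cal{F}}*\omega_\eps$. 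The determinant computation is unchanged, giving $\det M = \abs{Y_0^{(\la)}}^2\pr{\eta_\eps^{-1}(t,x)} \ge (I(Y_0)/2)^2 > 0$, and the terms $d_k$, $e_k$ in the decomposition of $BM_1$ are estimated exactly as before, the $e_k$ term using $\dv(\omega_\eps Y_\eps^{(\la)}) = \dv R_\eps^{(\la)}$ (the analogue of \cref{e:wYREquiv}) and the uniform bound \cref{e:YRCalphaBound}. The outcome is the analogue of \cref{e:VepsBound} with $c^2$ replaced by $(I(Y_0)/2)^2$ and all $Y$- and $R$-norms replaced by their suprema over $\la$.

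With this estimate in hand, the choice of $r$ in \cref{e:rChoice} and the Gronwall closure of \cref{S:ClosingWithGronwall} go through verbatim, yielding \cref{e:graduepsLinffBound} and then \cref{e:YRepsBounds} uniformly in $\la$. The convergence argument of \cref{S:Convergence} is applied for each $\la$ separately, giving \cref{e:MainBounds} with $Y$ as in \cref{e:YFamily}, and \crefrange{e:YdivBound}{e:gradetaMainBound} follow as there. Finally, \cref{e:LevelCurveSmoothness} concerns a single divergence-free member of the family and its stream function, so its proof is exactly the one given in \cref{S:ProofOfVortexPatch}.

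The main obstacle — indeed the only point requiring care — is the bookkeeping verification that the lower bound $\abs{Y_0} \ge c$ entered the refined estimate of \cref{S:Refinedgradueps} \emph{solely} through the lower bound on $\det M$, while every other quantity there (the $r^\al$ factors, the $d_k$ and $e_k$ bounds, and the flow-map estimates) depends on $Y_0$ only through upper bounds that are uniform in $\la$. Because the estimate on $\abs{B}$ is genuinely a pointwise ($L^\iny$) bound, the $x$-dependence of the selected $\la(x)$ costs nothing: no global or continuous selection of $\la$ is needed, and this is precisely why the good/bad split can be dispensed with. Once this is checked, the pointwise selection legitimately replaces the decomposition and the remainder of the argument is unchanged.
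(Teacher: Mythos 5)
Your proposal is correct and follows essentially the same route as the paper: carry the family index through the transport and $C^\al$ estimates with suprema over $\la$, drop the good/bad vorticity decomposition and the cutoffs $\chi$, $b$, and estimate the matrix $B$ by a pointwise-in-$x$ choice of $\la$ making $\det M$ bounded below via $I(Y_0)>0$ (your use of $I(Y_0)/2$ rather than $I(Y_0)$ is a harmless, indeed slightly more careful, way to avoid assuming the supremum over $\la$ is attained). Your closing observation — that the lower bound on $Y_0$ enters only through $\det M$ in a pointwise $L^\iny$ estimate, so no continuous selection of $\la$ is needed — is exactly the point the paper relies on.
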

\begin{proof}
We outline only the changes that are needed to the proof of \cref{T:Serfati} given in \cref{S:ProofOfSerfatisResult}, as well as the preliminary transport estimates in \cref{S:Transport}.

In place of the $Y_\eps$, $R_\eps$ vector fields corresponding to the mollified initial vorticity, we have entire families,
\begin{align*}
    Y_\eps = (Y_\eps^{(\la)})_{\la \in \Lambda}, \quad
    R_\eps = (R_\eps^{(\la)})_{\la \in \Lambda}.
\end{align*}
The calculations in \cref{S:Transport} now apply to each $Y_\eps^{(\la)}$, $R_\eps^{(\la)}$. \cref{L:R0Decomp} then gives a $C^\al$ bound on the family $R_{0, \eps}$ and \cref{L:RegularitywY} bounds $\norm{\dv (\omega_\eps Y_\eps)}_{C^{\al - 1}}$.

\cref{S:EstimationGraduAndY,S:etaetainv} require no changes. The estimates in \cref{S:EstYR} are now done for each $Y_\eps^{(\la)}$, and \cref{e:YRCalphaBound} become bounds on whole families. Also, \cref{e:YepsBelow} becomes a lower bound on $I(Y_\eps)$.

In \cref{S:Refinedgradueps} the initial decomposition in \cref{e:pvgradKomegaFirstSplit} of the second term in $V_\eps$ is unchanged, as is the estimate of the second part. We no longer make a decomposition of the initial vorticity as in \cref{e:DecompositionOfVorticity}, for that is the purpose of the family of vector fields, $Y_\eps$. What remains, then, is the estimate of the matrix $B$ without a cutoff function; that is, with $B = \grad [\mu_{rh} \grad \Cal{F}] * \omega_\eps$. To estimate $B$ at $x \in \R^2$, we choose arbitrarily any $\la \in \Lambda$ such that \cref{e:detMBelow} holds at $x$ with $Y_0^{(\la)}$ in place of $Y_0$ and $c = I(Y_0)$; this is always possible by \cref{e:AssumptionGen}$_2$. This leads to the same estimate on $B$ as in \cref{e:BFinalEst}, where now $Y_\eps$ and $R_\eps$ are families of vector fields.

\cref{S:ClosingWithGronwall,S:Convergence} proceed with no significant changes, except that the estimates now apply to families of vector fields.
\end{proof}

To obtain the second part of \cref{T:Serfati}, \cref{e:ABound}, and \cref{T:LocalPropagation}, we must strengthen the assumptions on the vector field, $Y_0$, giving more uniform-in-space control on it than the restriction that $I(Y_0) > 0$. For this purpose, we define, for any $\la \in \Lambda$,
\begin{align}\label{e:Ulambda}
    U_\la
        = \set{x \in \R^2 \colon \smallabs{Y_0^{(\la)}(x)} > c'}
\end{align}
with $0 < c' < c := I(Y_0)$. Each $U_\la$ is open since it is the inverse image of an open set under the continuous map, $\smallabs{Y_0^{(\la)}}$. There always exists a countable partition of unity, $(\varphi_n)_{n \in \N}$, for which $\supp \varphi_n \subseteq U_\la$ for some $\la \in \Lambda$ (see Theorem 13.10 of \cite{Tu2008}). We require that such a partition of unity exist for some $c' \in (0, c)$ with the further property that
\begin{align}\label{e:POUAssumption}
    I' \pr{(\varphi_n)_{n \in \N}}
        := \sup_{x \in \R^2}
        \sum_{n \in \N}
        \norm{\varphi_n}_{\dot{C}^\al} \CharFunc_{\supp \varphi_n}(x)
        < \iny.
\end{align}
This condition rules out families of vector fields, $Y_0$, having members whose magnitude drops arbitrarily quickly from $c$ to $0$.

With this added assumption, we have \cref{T:RestOfResults}.
\begin{theorem}\label{T:RestOfResults}
    With $\omega_0$ as in \cref{T:SerfatiGen} and
    adding the condition in \cref{e:POUAssumption}, we have
    \cref{e:AEq} and \cref{e:ABound}, as well as the conclusions of
    \cref{T:LocalPropagation}.
\end{theorem}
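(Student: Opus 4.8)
The plan is to mirror the single-vector-field construction of \cref{S:MatrixA}, replacing the single cutoff $\chi(\eta_\eps^{-1})$ and the single nonvanishing field $Y_\eps$ by the partition of unity $(\varphi_n)_{n \in \N}$ supplied by \cref{e:POUAssumption} together with the members of the family $(Y_\eps^{(\la)})$. First I would fix, for each $n$, an index $\la(n) \in \Lambda$ with $\supp \varphi_n \subseteq U_{\la(n)}$, so that $\smallabs{Y_0^{(\la(n))}} > c'$ on $\supp \varphi_n$. Since $\det M = \smallabs{Y_0^{(\la(n))}}^2 \circ \eta_\eps^{-1} \ge (c')^2 > 0$ on $\supp \varphi_n(\eta_\eps^{-1})$ for the matrix $M$ built from $Y_\eps^{(\la(n))}$ as in \cref{S:Refinedgradueps} (a \emph{time-independent} lower bound, giving invertibility of $M$), and since the pushforward lower bound \cref{e:YepsBelow} still controls $\smallnorm{\smallabs{Y_\eps^{(\la(n))}}^{-1}}_{\dot{C}^\al}$, the argument of \cref{L:AMatrixCentralLemma} applies verbatim with $\varphi_n(\eta_\eps^{-1})$ in place of $\chi(\eta_\eps^{-1})$ and $Y_\eps^{(\la(n))}$ in place of $Y_\eps$, yielding
\[
    \varphi_n(\eta_\eps^{-1}) \PV \int \grad K(x - y) \omega_\eps(y) \, dy
        = \omega_\eps B_\eps^{(n)} + D_\eps^{(n)},
\]
with $B_\eps^{(n)}, D_\eps^{(n)}$ supported in $\supp \varphi_n(\eta_\eps^{-1})$ and built from $Y_\eps^{(\la(n))}$ exactly as in the lemma.

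Summing over $n$ and using $\sum_n \varphi_n \equiv 1$ (hence $\sum_n \varphi_n(\eta_\eps^{-1}) \equiv 1$) removes the good/bad vorticity split of \cref{S:Refinedgradueps} altogether and gives $\PV \int \grad K(x-y)\omega_\eps(y)\,dy = \omega_\eps \sum_n B_\eps^{(n)} + \sum_n D_\eps^{(n)}$. Setting $A_\eps = A^{(1)} + \sum_n B_\eps^{(n)}$ (redistributing the cutoffs onto $A^{(1)}$ as in \cref{S:MatrixA}) then gives $\grad u_\eps - \omega_\eps A_\eps = \sum_n D_\eps^{(n)}$, where $A_\eps = \sum_n \varphi_n(\eta_\eps^{-1}) A^{(n)}$ is the family analog of \cref{e:AExplicit}, each $A^{(n)}$ being the matrix of \cref{e:AExplicit} formed from $Y_\eps^{(\la(n))}$.

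The main obstacle is the uniform-in-$\eps$ $C^\al$ bound on the possibly infinite sums $\sum_n B_\eps^{(n)}$ and $\sum_n D_\eps^{(n)}$, and this is exactly what \cref{e:POUAssumption} is designed to supply. For the $L^\iny$ part, each summand is bounded on $\supp \varphi_n(\eta_\eps^{-1})$ by a constant independent of $n$ and $\eps$ (using $\smallabs{Y_0^{(\la(n))}} > c'$ there together with the uniform family bounds from \cref{T:SerfatiGen}), so $\sum_n \varphi_n(\eta_\eps^{-1}) = 1$ controls the total. For the $\dot{C}^\al$ seminorm I would split, for $x \ne y$, each matrix term as $\varphi_n(\eta_\eps^{-1}(x))\brac{A^{(n)}(x) - A^{(n)}(y)} + \brac{\varphi_n(\eta_\eps^{-1}(x)) - \varphi_n(\eta_\eps^{-1}(y))}A^{(n)}(y)$. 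The first piece sums to $O(\abs{x-y}^\al)$ because $\sum_n \varphi_n \equiv 1$ and the $\dot{C}^\al$ seminorms of the factors $A^{(n)}$ are uniformly bounded. The second piece is nonzero only when $\eta_\eps^{-1}(x)$ or $\eta_\eps^{-1}(y)$ lies in $\supp \varphi_n$, so it is bounded by
\[
    C \norm{\grad \eta_\eps^{-1}}_{L^\iny}^\al \abs{x-y}^\al
    \sum_n \norm{\varphi_n}_{\dot{C}^\al}
        \brac{\CharFunc_{\supp \varphi_n}(\eta_\eps^{-1}(x))
            + \CharFunc_{\supp \varphi_n}(\eta_\eps^{-1}(y))},
\]
and the two indicator sums are each $\le I'\pr{(\varphi_n)_{n\in\N}}$ by \cref{e:POUAssumption}. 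This, with \cref{L:ProductCalBound} and \cref{e:gradetaMainBound}, yields $\smallnorm{\sum_n B_\eps^{(n)}}_{C^\al}, \smallnorm{\sum_n D_\eps^{(n)}}_{C^\al} \le C_\al e^{e^{c_\al t}}$ uniformly in $\eps$; the sum $\sum_n D_\eps^{(n)}$ is handled by the identical splitting.

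Finally I would pass to the limit $\eps \to 0$ exactly as in \cref{S:Convergence}: a subsequence of $(A_\eps)$ and of $(\sum_n D_\eps^{(n)})$ converges in $C^\beta$ for all $\beta < \al$ to limits lying in $C^\al$, giving \cref{e:AEq} and the bounds in \cref{e:ABound}. The conclusions of \cref{T:LocalPropagation} then follow with no change to its stated proof: \cref{e:omegaCalLocalBound} uses only the Lipschitz bound \cref{e:gradetaMainBound}, which holds by \cref{T:SerfatiGen}, while \cref{e:graduCalLocalBound} follows from the \cref{e:ABound} just established.
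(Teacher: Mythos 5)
Your proposal is correct and follows essentially the same route as the paper: construct a local matrix $A_\eps^{(n)}$ for each $\varphi_n(\eta_\eps^{-1})\grad u_\eps$ by running the argument of \cref{S:MatrixA} with $Y_\eps^{(\la(n))}$ in place of $Y_\eps$ (the time-independent lower bound $\det M \ge (c')^2$ on $\supp\varphi_n(\eta_\eps^{-1})$ being the key point), sum using the partition of unity, and control the $C^\al$ norm of the sum via \cref{e:POUAssumption} together with $I'\pr{(\varphi_n\circ\eta_\eps^{-1})_{n\in\N}} \le \norm{\grad\eta_\eps^{-1}}_{L^\iny}^\al I'\pr{(\varphi_n)_{n\in\N}}$. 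The only cosmetic difference is that you inline the summation estimate (your two-piece splitting of the Hölder difference is exactly the content of \cref{L:ProductCalBoundSum}, which the paper simply invokes), and you spell out the $\eps\to 0$ limit and the deduction of \cref{T:LocalPropagation}, which the paper dispatches in a sentence.
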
 
\begin{proof}
We need only show that \cref{e:AEq} holds, for the remaining facts follow from it and \cref{e:MainBounds}, which we established in \cref{T:SerfatiGen}.

Returning to \cref{S:MatrixA} we employ our partition of unity to piece together $A_\eps$.
We do not make the decomposition of $\grad u_\eps$ using the cutoff function $\chi$. Instead, we construct an $A_\eps^{(n)}$ corresponding to $\varphi_n(\eta_\eps^{-1}) \grad u_\eps$ in the same manner that $A_\eps$ was constructed for $\chi(\eta_\eps^{-1})\grad u_\eps$ in \cref{S:MatrixA}. We then let
\begin{align*}
    A_\eps = \sum_{n \in \N} \varphi_n (\eta_\eps^{-1}) A_\eps^{(n)}
\end{align*}
and note that we have a doubly exponential in time bound on $\smallnorm{A_\eps^{(n)}}_{C^\al(\eta_\eps(\supp \varphi_n))}$ uniform in $n$ and $\eps$. The regularity of $A$ and $\grad u - \omega A$ in \cref{e:AEq} along with the bound in \cref{e:ABound} then follow from an application of \cref{L:ProductCalBoundSum} with $\psi_n = \varphi_n \circ \eta_\eps^{-1}$ and $f_n = A_\eps^{(n)}$. We can do this since
\begin{align*}
   I'\pr{(\psi_n)_{n \in \N}}
       \le \norm{\grad \eta_\eps^{-1}}_{L^\iny}^\al
           I'\pr{(\varphi_n)_{n \in \N}} 
\end{align*}
by \cref{e:CdotIneq}$_1$.
\end{proof}

\begin{remark}
Observe how in the proof of \cref{T:SerfatiGen} we had no need of a partition of unity when treating the matrix $B$ since the regularity of $B$ was not at issue.
\end{remark}

\begin{lemma}\label{L:ProductCalBoundSum}
    Let $(\psi_n)_{n \in \N}$ be a partition of unity and let $(f_n)_{n \in \N}$
    be a sequence of functions in $C^\al(\R^2)$.
    Then
    \begin{align*}
        \norm{\sup_{n \in \N} \psi_n f_n}_{C^\al(\R^2)}
            \le 3 \pr{1 + I'\pr{(\psi_n)_{n \in \N}}} \sup_{n \in \N}
                \norm{f_n}_{C^\al(\supp \psi_n)}.
    \end{align*}
\end{lemma}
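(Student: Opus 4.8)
The plan is to estimate separately the two pieces of the $C^\al$ norm of the locally finite sum $g := \sum_{n \in \N}\psi_n f_n$ (the object appearing as $A_\eps$ in the application), writing $M := \sup_{n}\norm{f_n}_{C^\al(\supp \psi_n)}$. The $L^\iny$ bound is immediate and I would dispatch it first: since $\psi_n \ge 0$, $\sum_n \psi_n \equiv 1$, and $\abs{f_n(x)} \le M$ whenever $\psi_n(x) \ne 0$ (that is, whenever $x \in \supp \psi_n$), one gets $\abs{g(x)} \le \sum_n \psi_n(x)\abs{f_n(x)} \le M\sum_n \psi_n(x) = M$, so $\norm{g}_{L^\iny}\le M$.

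For the homogeneous seminorm I would fix $x, y$ and split each summand via $\psi_n(x)f_n(x) - \psi_n(y)f_n(y) = \psi_n(x)\brac{f_n(x)-f_n(y)} + \brac{\psi_n(x)-\psi_n(y)}f_n(y)$. The first group is harmless for indices with $\psi_n(x)\ne 0$ and $y\in\supp\psi_n$, since then $\abs{f_n(x)-f_n(y)}\le M\abs{x-y}^\al$ and $\sum_n \psi_n(x)\le 1$. The main obstacle is precisely that $f_n$ is only controlled on $\supp\psi_n$, whereas this naive decomposition produces values $f_n(y)$ (or $f_n(x)$) at points that need not lie in that support; the whole lemma hinges on routing those terms through the vanishing factor $\psi_n$ rather than through an uncontrolled value of $f_n$.

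To handle this I would partition the indices (for the fixed pair $x,y$) into those $n$ with both $x,y\in\supp\psi_n$ and those with exactly one of the two in $\supp\psi_n$ (indices with neither point in the support contribute nothing). For an index with $y\notin\supp\psi_n$ the total contribution of $n$ to $g(x)-g(y)$ collapses to $\psi_n(x)f_n(x)$, because $\psi_n(y)=0$; here $\psi_n(x)=\psi_n(x)-\psi_n(y)$ is bounded by $\norm{\psi_n}_{\dot{C}^\al}\abs{x-y}^\al$, while $\abs{f_n(x)}\le M$ since $x\in\supp\psi_n$. The symmetric bound holds when $x\notin\supp\psi_n$. For indices with both points in the support, the first term is controlled as above using $\sum_n\psi_n(x)\le 1$, and the second by $M\norm{\psi_n}_{\dot{C}^\al}\abs{x-y}^\al$, now legitimately since $y\in\supp\psi_n$.

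Summing, the ``both in support'' and ``only $x$ in support'' contributions combine into $M\abs{x-y}^\al \sum_n \norm{\psi_n}_{\dot{C}^\al}\CharFunc_{\supp\psi_n}(x) \le M\, I'((\psi_n))\abs{x-y}^\al$ by the definition of $I'$, and the ``only $y$ in support'' contribution is bounded the same way using $\CharFunc_{\supp\psi_n}(y)$. Together with the $\sum_n\psi_n(x)\le 1$ term this would give $\abs{g(x)-g(y)}\le M\pr{1+2I'((\psi_n))}\abs{x-y}^\al$, hence $\norm{g}_{\dot{C}^\al}\le M(1+2I'((\psi_n)))$. Adding the $L^\iny$ estimate yields $\norm{g}_{C^\al}\le M\pr{2+2I'((\psi_n))}\le 3\pr{1+I'((\psi_n))}M$, which is the claimed bound. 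The only delicate point, as flagged, is the index bookkeeping that forces each one-sided-support term to carry its estimate through $\norm{\psi_n}_{\dot{C}^\al}$.
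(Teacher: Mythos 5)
Your proof is correct and follows essentially the same route as the paper's: the same splitting $\psi_n(x)f_n(x) - \psi_n(y)f_n(y) = \psi_n(x)\brac{f_n(x)-f_n(y)} + \brac{\psi_n(x)-\psi_n(y)}f_n(y)$, the same case analysis on which of $x,y$ lies in $\supp \psi_n$ so that each one-sided term is routed through $\norm{\psi_n}_{\dot{C}^\al}$, and the same use of $I'$ to sum. Your bookkeeping of the constant ($2+2I'$ versus the paper's cruder $3+2I'$) is if anything slightly sharper, and both land under the stated $3(1+I')$.
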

\begin{proof}
    Let
    \begin{align*}
        F
            = \sum_{n \in \N} \psi_n f_n,
            \quad
            K_n = \supp \psi_n.
    \end{align*}
    First observe that
    \begin{align*}
        \norm{F}_{L^\iny}
            \le \sup_{n \in \N} \norm{f_n}_{L^\iny(K_n)},
    \end{align*}
    so it remains
    only to bound the homogeneous norm.
    
    Let $x, y \in \R^2$. There exists two finite sets, $N = \set{n_1, \dots n_j}$ and
    $M = \set{m_1, \dots, m_k}$ such that the only elements of $(\psi_n)_{n \in \N}$
    not vanishing at $x$ have indices in $N$ and the only
    elements of $(\psi_n)_{n \in \N}$ not vanishing at $y$ have indices in $M$.
    Defining
    \begin{align*}
        \Delta_n(x, y)
            := \frac{\abs{(\psi_n f_n)(x) - (\psi_n f_n)(y)}}{\abs{x - y}^\al},
    \end{align*}
    there are four cases.
    
    If $n \in N$ and $n \in M$ then
    \begin{align}\label{e:DeltaxyBound}
        \begin{split}
        \Delta_n(x, y)
            &\le \frac{\abs{\psi_n(x) (f_n(x) - f_n(y))}}{\abs{x - y}^\al} 
                + \frac{\abs{f_n(y) (\psi_n(x) - \psi_n(y))}}{\abs{x - y}^\al} \\
            &\le \psi_n(x) \norm{f_n}_{\dot{C}^\al(K_n)}
                + \norm{\psi_n}_{\dot{C}^\al} \norm{f_n}_{L^\iny(K_n)}.
            \end{split}
    \end{align}
    
    If $n \notin N$ and $n \notin M$ then $\Delta_n(x, y) = 0$.
    
    If $n \in n$ but $n \notin M$ then
    \begin{align*}
        \Delta_n(x, y)
            &= \frac{\abs{\psi_n(x) f_n(x)}}{\abs{x - y}^\al} 
            = \frac{\abs{(\psi_n(x) - \psi_n(y))f_n(x)}}{\abs{x - y}^\al}
            \le \norm{\psi_n}_{\dot{C}^\al} \norm{f_n}_{L^\iny(K_n)}
    \end{align*}
    and the same inequality holds for $n \notin N$ but $n \in M$.
    
    Thus, \cref{e:DeltaxyBound} holds for all four cases.
    
    Then,
    \begin{align*}
        \Delta(x, y)
            &:= \frac{\abs{F(x) - F(y)}}{\abs{x - y}^\al}
            \le \sum_{n \in M \cup N} \Delta_n(x, y) \\
            &\le 2 \sup_{n \in \N} \norm{f_n}_{\dot{C}^\al(K_n)}
                + 2 \sup_{n \in \N} \norm{f_n}_{L^\iny(K_n)}
                I'(Y_0),
    \end{align*}
    from which the stated bound follows.
\end{proof}

\section{Equivalence to Chemin's vortex patch result}\label{S:EquivInitConditions}

\noindent Using our notation, Chemin in \cite{Chemin1993Persistance,C1998} makes the same assumptions on the initial data as those in \cref{e:AssumptionGen} except that in place of \cref{e:AssumptionGen}$_3$ he assumes that
$
    Y_0 \cdot \grad \omega_0 \in C^{\al - 1}.
$
We show in this section that the assumptions of Chemin and Serfati are, in fact, equivalent, and that both are equivalent to assuming that $Y_0 \cdot \grad u_0 \in C^\al$. We state this more precisely as follows:
\begin{prop}\label{P:Equivalence}
    Assume that $\omega_0 = \curl u_0$ satisfies \cref{e:Assumption}$_{1, 2, 4}$
    or \cref{e:AssumptionGen}$_{1, 2, 4}$. Then
    \begin{align*}
        Y_0 \cdot \grad \omega_0 \in C^{\al - 1}
            \iff
            \dv (\omega_0 Y_0) \in C^{\al - 1}
            \iff
            Y_0 \cdot \grad u_0 \in C^\al.
    \end{align*}
\end{prop}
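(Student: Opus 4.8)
The plan is to prove the two equivalences separately: the first is essentially a product rule combined with the embedding $L^\iny \hookrightarrow C^{\al-1}$, and the second rests on the commutator decomposition of $Y_0 \cdot \grad u_0$ furnished by \cref{C:graduCor} together with \cref{L:EquivalentConditions}. Throughout, the only hypotheses used are $\omega_0 \in L^1 \cap L^\iny$, $Y_0 \in C^\al$, and $\dv Y_0 \in C^\al$, which are common to \cref{e:Assumption}$_{1,2,4}$ and \cref{e:AssumptionGen}$_{1,2,4}$.

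For the first equivalence I would start from the distributional Leibniz identity
\[
    \dv(\omega_0 Y_0) = Y_0 \cdot \grad \omega_0 + \omega_0 \dv Y_0,
\]
which I take as the \emph{definition} of $Y_0 \cdot \grad \omega_0$: since $\grad \omega_0$ is only a distribution of negative order, the product $Y_0 \cdot \grad \omega_0$ is not defined by a naive paraproduct and must be read through this identity. Now $\omega_0 \in L^\iny$ by \cref{e:Assumption}$_1$ and $\dv Y_0 \in C^\al \subseteq L^\iny$ by \cref{e:Assumption}$_4$, so the correction term $\omega_0 \dv Y_0$ lies in $L^\iny$. Recalling that $C^{\al - 1} \cong B^{\al - 1}_{\infty, \infty}$ (as used in the proof of \cref{L:RegularitywY}) and that $L^\iny \hookrightarrow B^0_{\infty, \infty} \hookrightarrow B^{\al - 1}_{\infty, \infty}$ because $\al - 1 < 0$, we conclude $\omega_0 \dv Y_0 \in C^{\al - 1}$. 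Hence $\dv(\omega_0 Y_0)$ and $Y_0 \cdot \grad \omega_0$ differ by a fixed element of $C^{\al - 1}$, so one lies in $C^{\al - 1}$ iff the other does.

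For the second equivalence I would invoke \cref{C:graduCor} exactly as it is used in \cref{S:EstYR}, which (at time zero, with $u_0 = K * \omega_0$) gives
\[
    Y_0 \cdot \grad u_0
        = \PV \int \grad K(x - y)\, \omega_0(y) \brac{Y_0(x) - Y_0(y)} \, dy
            + K * \dv(\omega_0 Y_0);
\]
denote the first, commutator-type term by $\mathrm{I}$. Since $\omega_0 Y_0 \in L^\iny$ (as $\omega_0 \in L^\iny$ and $Y_0 \in C^\al$), \cref{L:EquivalentConditions} applied with $Z = \omega_0 Y_0$ shows $K * \dv(\omega_0 Y_0) \in C^\al$ iff $\dv(\omega_0 Y_0) \in C^{\al - 1}$. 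It therefore suffices to show that $\mathrm{I}$ always lies in $C^\al$; granting this, $Y_0 \cdot \grad u_0$ and $K * \dv(\omega_0 Y_0)$ differ by the $C^\al$ function $\mathrm{I}$, so $Y_0 \cdot \grad u_0 \in C^\al$ iff $K * \dv(\omega_0 Y_0) \in C^\al$ iff $\dv(\omega_0 Y_0) \in C^{\al - 1}$, completing the chain. To bound $\mathrm{I}$ I would apply \cref{L:SerfatiLemma2Inf} componentwise to the kernel $L_3(x, y) = \grad K(x - y)\omega_0(y)$ of \cref{L:SerfatiKernels}, noting that its $**$-norm is controlled \emph{directly} by $\norm{\omega_0}_{L^1 \cap L^\iny}$ rather than by $V(\omega_0)$: indeed $\abs{\grad K(z)} \le C\abs{z}^{-2}$ bounds $\norm{L_3}_*$ by $C\norm{\omega_0}_{L^\iny}$, while $\abs{z}^{-2} \le 1$ on $\abs{z} \ge 1$ bounds the tail term by $C\norm{\omega_0}_{L^1}$. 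This yields $\norm{\mathrm{I}}_{C^\al} \le C\al^{-1}(1 - \al)^{-1}\norm{\omega_0}_{L^1 \cap L^\iny}\norm{Y_0}_{C^\al}$.

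The one delicate point — and the main obstacle — is that \cref{L:SerfatiKernels} records the $L_3$ bound for $\omega \in C_C^\iny$, whereas here $\omega_0$ is only in $L^1 \cap L^\iny$, and more seriously the hypothesis \cref{e:LPVinL1} of \cref{L:SerfatiLemma2} requires the bare principal value $\PV \int \grad K(x - y)\omega_0(y)\,dy$ to be finite pointwise, which can fail for merely bounded vorticity. I would circumvent both issues by mollification: replace $\omega_0$ by $\omega_{0, \eps} = \rho_\eps * \omega_0$, which is smooth and in $L^1 \cap L^\iny$, so \cref{e:LPVinL1} holds and $\norm{L_3^\eps}_{**} \le C\norm{\omega_{0,\eps}}_{L^1 \cap L^\iny} \le C\norm{\omega_0}_{L^1 \cap L^\iny}$ uniformly in $\eps$. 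The integrand defining $\mathrm{I}$ is dominated by $C\norm{Y_0}_{\dot C^\al}\abs{\omega_0(y)}\abs{x - y}^{\al - 2}$, which is integrable, so $\mathrm{I}_\eps \to \mathrm{I}$ pointwise by dominated convergence and the uniform $C^\al$ bound passes to the limit, giving $\mathrm{I} \in C^\al$ (the identity from \cref{C:graduCor} likewise holds for $\omega_0$ by passing to the limit in the smooth identity for $\omega_{0,\eps}$). Finally, the argument is insensitive to which of \cref{e:Assumption} or \cref{e:AssumptionGen} is assumed: in the family case every estimate above holds uniformly in $\la \in \Lambda$ with constants depending only on $\norm{\omega_0}_{L^1 \cap L^\iny}$, $\norm{Y_0}_{C^\al}$, and $\norm{\dv Y_0}_{C^\al}$, so taking $\sup_{\la}$ preserves all three equivalences.
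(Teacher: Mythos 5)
Your first equivalence is correct and is essentially the paper's: the paper writes $Y_0 \cdot \grad \omega_0 = \dv(\omega_0 Y_0) - \omega_0 \dv Y_0$ and disposes of the correction term via \cref{L:L1LInfHolder} (the embedding $(L^1\cap L^\iny)(\R^2) \subseteq C^{\al-1}(\R^2)$); your Besov-embedding justification $L^\iny \hookrightarrow B^{0}_{\iny,\iny} \hookrightarrow B^{\al-1}_{\iny,\iny}$ is equally valid and is even anticipated by the paper's parenthetical remark that $\dv Y_0 \in L^\iny$ already suffices here.

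The second equivalence is where there is a genuine gap. Your entire argument rests on the claim that the commutator term $\mathrm{I} = \PV\int \grad K(x-y)\brac{Y_0(x)-Y_0(y)}\omega_0(y)\,dy$ lies in $C^\al$ \emph{unconditionally}, with norm controlled by $\norm{\omega_0}_{L^1\cap L^\iny}\norm{Y_0}_{C^\al}$. That is not what \cref{C:graduCor} asserts: its bound is $C\,V(\omega)\norm{Y}_{\dot{C}^\al}$, and the factor $V(\omega)$ -- which contains $\smallnorm{\PV\int\grad K(\cdot-y)\omega(y)\,dy}_{L^\iny}$ -- is not decorative. The $L^\iny$ part of the $C^\al$ norm of $\mathrm{I}$ is indeed controlled by $\norm{L_3}_{**} \le C\norm{\omega_0}_{L^1\cap L^\iny}$, but the $\dot{C}^\al$ seminorm is not: in the proof of \cref{L:SerfatiLemma2}, the term $\mathrm{III}_1 = \brac{f(y)-f(x)}\int_{\abs{x-z}>2h}L(x,z)\,dz$ requires a bound on the truncated integrals $\int_{\abs{x-z}>2h}L(x,z)\,dz$ that is uniform in $x$ and $h$, and for the kernel $L_3$ this is precisely the maximal truncated singular integral of $\omega$, i.e.\ the quantity $V(\omega)$. (In paraproduct language the obstruction is the term $T_{T\omega}Y_0$, which needs $T\omega \in L^\iny$, not merely BMO.) Your mollification does not rescue this: by \cref{L:graduExp}, $V(\omega_{0,\eps})$ is comparable to $\norm{\omega_0}_{L^\iny} + \norm{\rho_\eps * \grad u_0}_{L^\iny}$, which diverges (logarithmically in $\eps$) whenever $\grad u_0 \notin L^\iny$ -- the generic situation for merely bounded vorticity. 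So the claimed uniform-in-$\eps$ bound on $\mathrm{I}_\eps$ fails and the limit argument collapses.

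This is exactly why the paper's proof of the implication $Y_0\cdot\grad u_0 \in C^\al \implies \dv(\omega_0 Y_0)\in C^{\al-1}$ first extracts $\grad u_0 \in L^\iny$ from the hypothesis, via the pointwise linear-algebra decomposition of $\grad u_0$ in the frame $(Y, Y^\perp)$ on $U_\la$ together with $\dv u_0 = 0$ and $\omega_0 \in L^\iny$ (inequality \cref{e:gradu0Bound}); only after that is the commutator estimate (kernel $L_4$ of \cref{L:SerfatiKernels}, then \cref{C:graduCor}) applicable with constants uniform in $\eps$. The reverse implication is obtained in the paper not from the commutator identity at all, but by quoting \cref{e:YgraduBound} at $t=0$, i.e.\ \cref{T:Serfati}, whose proof is precisely what establishes $V(\omega_0) < \iny$ under \cref{e:Assumption}. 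To repair your argument you would need to insert the $\grad u_0 \in L^\iny$ step before invoking the commutator bound, at which point you have essentially reconstructed the paper's proof.
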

\begin{proof}
    Since $\omega_0 \dv Y_0 \in L^1 \cap L^\iny$ and
    \begin{align*}
        Y_0 \cdot \grad \omega_0
            = \dv(\omega_0 Y_0) - \omega_0 \dv Y_0,
    \end{align*}
    the first equivalence is immediate in light of \cref{L:L1LInfHolder}, which we
    prove below. (This equivalence continues to hold with the weaker assumption
    of \cref{R:WeakerAssumption}; in fact, $\dv Y_0 \in L^\iny$ is sufficient for this
    equivalence to hold.)
    
    We have already shown that $\dv (\omega_0 Y_0) \in C^{\al - 1}
    \implies Y_0 \cdot \grad u_0 \in C^\al$,
    since $Y_0 \cdot \grad u_0 \in C^\al$ is \cref{e:YgraduBound}
    at $t = 0$. We complete the proof by showing that
    $Y_0 \cdot \grad u_0 \in C^\al \implies \dv (\omega_0 Y_0) \in C^{\al - 1}$.
    We will do this only in the setting of the more general assumptions in
    \cref{e:AssumptionGen}, though there is a clear analog to those in \cref{e:Assumption}.

    So assume that $Y_0 \cdot \grad u_0 \in C^\al$ and that
    \cref{e:AssumptionGen}$_{1, 2, 4}$ hold.
    Let $Y = Y_0^{(\la)}$ be any element of $Y_0$ and let $U_\lambda$ be as given
    in \cref{e:Ulambda}. Then on $U_\lambda$ we can write $\grad u_0$ as
    \begin{align*}
        \grad u_0
            &= \grad u_0 \matrixone{Y & Y^\perp} \matrixone{Y & Y^\perp}^{-1}
            = \matrix{Y \cdot \grad u_0^1 & Y^\perp \cdot \grad u_0^1}
                     {Y \cdot \grad u_0^2 & Y^\perp \cdot \grad u_0^2}
                     \matrixone{Y & Y^\perp}^{-1}.
    \end{align*}
    Now, $Y \cdot \grad u_0^1$ and $Y \cdot \grad u_0^2$ both lie in $C^\al$, so using
    $\dv u_0 = 0$ and $\omega_0 = \prt_1 u_0^2 - \prt_2 u_0^1 \in L^\iny$, we have
    \begin{align*}
        \grad u_0^1 \cdot Y^\perp
            = - \prt_1 u_0^1 Y^2 + \prt_2 u_0^1 Y^1
            = \prt_2 u_0^2 Y^2 + \prt_1 u_0^2 Y^1 - \omega_0 Y^1
            = Y \cdot \grad u_0^2 - \omega_0 Y^1
            \in L^\iny, \\
        \grad u_0^2 \cdot Y^\perp
            = - \prt_1 u_0^2 Y^2 + \prt_2 u_0^2 Y^1
            = \omega_0 Y^2 - \prt_2 u_0^1 Y^2 - \prt_1 u_0^1 Y^1
            = \omega_0 Y^2 - Y \cdot \grad u_0^1
            \in L^\iny.
    \end{align*}
    Also,
    \begin{align*}
        \smallabs{\matrixone{Y & Y^\perp}^{-1}}
            = \abs{\matrixone{Y & Y^\perp}}^{-1}
            \le C \abs{Y}^{-1}.
    \end{align*}
    This shows that
    \begin{align}\label{e:gradu0Bound}
        \begin{split}
        \norm{\grad u_0}_{L^\iny(U_\la)}
            &\le C \abs{Y}^{-1} \brac{
                    \norm{Y \cdot \grad u_0}_{L^\iny(U_\la)}
                    + \norm{\omega_0}_{L^\iny} \norm{Y}_{L^\iny(U_\la)}
                } \\
            &\le C I(Y_0) \bigbrac{
                    \norm{Y_0 \cdot \grad u_0}_{C^\al}
                    + \norm{\omega_0}_{L^\iny} \norm{Y_0}_{C^\al}
                }.
        \end{split}
    \end{align}
    We conclude that $\grad u_0 \in L^\iny(\R^2)$.
    
    The result now follows immediately from \cref{C:graduCor},
    though this corollary requires regularity of $\omega_0$, so an
    approximation argument, which we leave to the reader, is required.
    \OptionalDetails{
    We let $u_{0, \eps} = \rho_\eps * u_0$ as in \cref{S:Transport} giving and
    $\omega_{0, \eps} = \rho_\eps * \omega_0$. Note that
    $\grad u_{0, \eps} \in L^\iny(\R^2)$ with a bound that
    is uniform in $\eps$.
    Then
    \begin{align*}
        Y \cdot \grad u_{0, \eps}
            &= Y \cdot (\rho_\eps * \grad u_0)
            = \brac{Y \cdot (\rho_\eps * \grad u_0) - \rho_\eps*(Y \cdot \grad u_0)}
                + \rho_\eps*(Y \cdot \grad u_0).
    \end{align*}
    Hence,
    \begin{align*}
        \norm{Y \cdot \grad u_{0, \eps}}_{C^\al}
            &\le \norm{\int \rho(x - y) \grad u_0(y) \brac{Y(x) - Y(y)} \, dy}_{C^\al}
                + \norm{\rho_\eps*(Y \cdot \grad u_0)}_{C^\al} \\
            &\le C \norm{\grad u_0}_{L^\iny} \norm{Y}_{C^\al}
                + \norm{Y \cdot \grad u_0}_{C^\al}
            \le C(\omega_0, Y_0),
    \end{align*}
    where we applied \cref{L:SerfatiLemma2Inf} with the kernel $L_4$ of
    \cref{L:SerfatiKernels}.
    \cref{C:graduCor} then gives
    \begin{align*}
        \norm{K * \dv(\omega_{0, \eps} Y)}_{C^\al}
            \le C(\omega_0, Y_0),
    \end{align*}
    since $V(\omega_{0, \eps}) \le V(\omega_0)$.
    (The constant's dependence on $\al$ is immaterial
    for our purposes here.)
    We conclude that some subsequence
    (which we do not relabel) of $(K * \dv(\omega_{0, \eps} Y))$ converges in
    $C^r(\R^2)$ for all $r < \al$ to some $f \in C^\al(\R^2)$.
    Since $\omega_{0, \eps} \to \omega_0$ as Schwartz-class distributions, it follows
    that $K * \dv(\omega_{0, \eps} Y) \to K * \dv(\omega_0 Y)$ (by \cref{e:KdivZDef},
    for instance) and that $f = K * \dv(\omega_0 Y)$.
    } 
\end{proof}

\begin{lemma}\label{L:L1LInfHolder}
    For all $\beta < 0$,
    \begin{align*}
        (L^1 \cap L^\iny)(\R^2) \subseteq C^\beta(\R^2).
    \end{align*}
\end{lemma}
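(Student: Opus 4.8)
The plan is to prove the embedding using the paper's own definition of the negative \Holder space in \cref{D:HolderSpaces}, by exhibiting for each $h \in (L^1 \cap L^\iny)(\R^2)$ an explicit representation $h = \dv v$ with $v \in C^\al(\R^2)$, where $\beta = \al - 1 \in (-1, 0)$. The natural candidate is the ``Riesz potential of order one,''
\[
    v = \grad \Cal{F} * h,
\]
which is well defined (as an absolutely convergent, not principal-value, integral) because $\abs{\grad \Cal{F}(z)} \le C \abs{z}^{-1}$ is locally integrable in $\R^2$ and bounded at infinity. Since $\dv v = (\Delta \Cal{F}) * h = h$ (as $\Delta \Cal{F} = \delta$), the definition of $C^{\al - 1}$ gives $\norm{h}_{C^{\al - 1}} \le \norm{v}_{C^\al}$, so it suffices to show $v \in C^\al(\R^2)$ with $\norm{v}_{C^\al} \le C_\al \norm{h}_{L^1 \cap L^\iny}$.

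First I would bound $\norm{v}_{L^\iny}$ by splitting the convolution over $\{\abs{x - y} < 1\}$ and $\{\abs{x - y} \ge 1\}$: on the inner region $\abs{\grad \Cal{F}(x-y)}$ is integrable and one uses $\norm{h}_{L^\iny}$, while on the outer region $\abs{\grad \Cal{F}(x-y)} \le C$ and one uses $\norm{h}_{L^1}$, giving $\norm{v}_{L^\iny} \le C \norm{h}_{L^1 \cap L^\iny}$. The substance is the homogeneous seminorm. Fix $x, x'$ and set $d = \abs{x - x'}$. For $d \ge 1/2$ the $L^\iny$ bound already yields $\abs{v(x) - v(x')} \le 2 \norm{v}_{L^\iny} \le C \norm{v}_{L^\iny} d^\al$. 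For $d < 1/2$ I would write $v(x) - v(x') = \int [\grad \Cal{F}(x - y) - \grad \Cal{F}(x' - y)] h(y) \, dy$ and split the domain into three genuine regions (since $2d < 1$): the near region $\{\abs{x - y} < 2d\}$, the middle annulus $\{2d \le \abs{x - y} \le 1\}$, and the outer region $\{\abs{x - y} > 1\}$. On the near region I bound the two terms separately via $\abs{\grad \Cal{F}(z)} \le C \abs{z}^{-1}$ and $\int_{\abs{z} < 3d} \abs{z}^{-1} \, dz = C d$, giving $C d \norm{h}_{L^\iny}$. On the latter two regions the mean value theorem applies (because $\abs{x - y} \ge 2d$ forces the whole segment $[x, x']$ to stay at distance $\ge \abs{x-y}/2$ from $y$), with $\abs{\grad^2 \Cal{F}(z)} \le C \abs{z}^{-2}$, so the integrand is $\le C d \abs{x - y}^{-2}$.

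Integrating the middle-annulus bound gives $C d \norm{h}_{L^\iny} \int_{2d}^1 \rho^{-1} \, d\rho = C d \log(1/d) \norm{h}_{L^\iny}$, while the outer region gives $C d \norm{h}_{L^1}$ using $\abs{x - y}^{-2} \le 1$. Collecting all three contributions and using, for $0 < d \le 1$, the elementary inequalities $d \le d^\al$ and $d \log(1/d) \le C_\al d^\al$ (the latter because $d^{1 - \al} \log(1/d)$ is bounded on $(0,1]$ precisely when $\al < 1$), one obtains $[v]_{\dot C^\al} \le C_\al \norm{h}_{L^1 \cap L^\iny}$. This proves the claim for $\beta = \al - 1 \in (-1, 0)$, which is the only range used elsewhere in the paper. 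For a general $\beta < 0$ I would interpret $C^\beta = B^\beta_{\infty, \infty}$ (as noted in the proof of \cref{L:RegularitywY}) and conclude by the elementary Besov embedding $B^{\beta'}_{\infty, \infty} \hookrightarrow B^\beta_{\infty, \infty}$ for any $\beta' \in (-1, 0)$ with $\beta' > \beta$.

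The only real obstacle is the middle-annulus estimate: the Hessian kernel $\abs{z}^{-2}$ is borderline non-integrable in $\R^2$, producing the logarithmic factor $d \log(1/d)$. The key observation is that this logarithm is dominated by the extra gain $d^{1 - \al}$ exactly when $\al \in (0,1)$, so the hypothesis $\al \in (0,1)$ (equivalently $\beta \in (-1,0)$) is precisely what makes the estimate close; everything else is a routine weakly-singular-kernel computation.
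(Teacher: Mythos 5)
Your proposal is correct and is essentially the paper's own proof: since $-(K*h)^\perp = \grad \Cal{F} * h$, your potential $v$ is literally the vector field the paper writes $h$ as the divergence of, the only difference being that the paper cites Lemma 8.1 and Proposition 8.2 of \cite{MB2002} for the log-Lipschitz (hence $C^\al$ for every $\al < 1$) regularity of this potential, while you reprove that standard estimate by hand via the three-region splitting. Note also that the paper's proof, like the substantive part of yours, only treats $\beta \in (-1,0)$ --- the only range for which \cref{D:HolderSpaces} actually defines $C^\beta$ --- so your closing Besov-embedding remark for $\beta \le -1$ is a harmless extra that the paper does not bother with.
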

\begin{proof}
    Let $\beta \in (-1, 0)$ and $f \in (L^1 \cap L^\iny)(\R^2)$. From Lemma 8.1
    and Proposition 8.2 of \cite{MB2002}, $K * f$ is log-Lipschitz and
    so lies in $C^{1 + \beta}(\R^2)$. But then
    \begin{align*}
        f
            = \curl (K * f)
            = - \dv (K * f)^\perp
    \end{align*}
    is the divergence of the $C^{1 + \beta}$-function, $- (K * f)^\perp$,
    and so lies in $C^\beta(\R^2)$.
\end{proof}

\Ignore{ 
\begin{lemma}\label{L:EquivInitConditions}
    Let $Y_0$ be a family of vector fields
    for which \cref{e:AssumptionGen}$_{1, 2}$ hold. Consider the four conditions,
    \begin{enumerate}[label=(\arabic*), ref=(\arabic*) of \cref{L:EquivInitConditions}]
        \item
            $\dv(\omega_0 Y_0) \in C^{\al - 1}$;    \label{I:1}
            
        \item
            $Y_0 \cdot \grad u_0 \in C^\al$;        \label{I:2}
            
        \item
            $\grad u_0 \in L^\iny(\R^2)$;           \label{I:3}
            
        \item
            $\dv Y_0 \in C^\al$.                    \label{I:4}
    \end{enumerate}
    We have, $(1)$ and $(4) \implies (2)$ and $(3)$ while $(2) \implies (1)$
    and $(3)$.
    \Ignore{ 
    Then \cref{e:AssumptionGen}$_3$
    holds if and only if $\norm{Y_0 \cdot \grad u}_{C^\al} < \iny$ (recalling that
    this norm applies to the entire family $Y_0$ of vector fields).
    In both cases, either of the equivalent conditions implies that
    $\grad u_0 \in L^\iny(\R^2)$.
    } 
\end{lemma}
\begin{proof}
    We have already shown that $(1)$ and $(4) \implies (2)$ and $(3)$,
    since $(2)$ and $(3)$ are \cref{e:YgraduBound} and \cref{e:MainBounds}
    at $t = 0$.
    So assume $(2)$.
    
    Let $Y = Y_0^{(\la)}$ be any element of $Y_0$ and let $U_\lambda$ be as given
    in \cref{e:Ulambda}. Then on $U_\lambda$ we can write $\grad u_0$ as
    \begin{align*}
        \grad u_0
            &= \grad u_0 \matrixone{Y & Y^\perp} \matrixone{Y & Y^\perp}^{-1}
            = \matrix{Y \cdot \grad u_0^1 & Y^\perp \cdot \grad u_0^1}
                     {Y \cdot \grad u_0^2 & Y^\perp \cdot \grad u_0^2}
                     \matrixone{Y & Y^\perp}^{-1}.
    \end{align*}
    Now, $Y \cdot \grad u_0^1$ and $Y \cdot \grad u_0^2$ both lie in $C^\al$, so using
    $\dv u_0 = 0$ and $\omega_0 = \prt_1 u_0^2 - \prt_2 u_0^1 \in L^\iny$, we have
    \begin{align*}
        \grad u_0^1 \cdot Y^\perp
            = - \prt_1 u_0^1 Y^2 + \prt_2 u_0^1 Y^1
            = \prt_2 u_0^2 Y^2 + \prt_1 u_0^2 Y^1 - \omega_0 Y^1
            = Y \cdot \grad u_0^2 - \omega_0 Y^1
            \in L^\iny, \\
        \grad u_0^2 \cdot Y^\perp
            = - \prt_1 u_0^2 Y^2 + \prt_2 u_0^2 Y^1
            = \omega_0 Y^2 - \prt_2 u_0^1 Y^2 - \prt_1 u_0^1 Y^1
            = \omega_0 Y^2 - Y \cdot \grad u_0^1
            \in L^\iny.
    \end{align*}
    Also,
    \begin{align*}
        \smallabs{\matrixone{Y & Y^\perp}^{-1}}
            = \abs{\matrixone{Y & Y^\perp}}^{-1}
            \le C \abs{Y}^{-1}.
    \end{align*}
    This shows that
    \begin{align}\label{e:gradu0Bound}
        \begin{split}
        \norm{\grad u_0}_{L^\iny(U_\la)}
            &\le C \abs{Y}^{-1} \brac{
                    \norm{Y \cdot \grad u_0}_{L^\iny(U_\la)}
                    + \norm{\omega_0}_{L^\iny} \norm{Y}_{L^\iny(U_\la)}
                } \\
            &\le C I(Y_0) \bigbrac{
                    \norm{Y_0 \cdot \grad u_0}_{C^\al}
                    + \norm{\omega_0}_{L^\iny} \norm{Y_0}_{C^\al}
                }.
        \end{split}
    \end{align}
    We conclude that $\grad u_0 \in L^\iny(\R^2)$.
    
    Formally, the result now follows immediately from \cref{C:graduCor},
    but this corollary requires regularity of $\omega_0$, so we must make an
    approximation argument.
    
    We let $u_{0, \eps} = \rho_\eps * u_0$ as in \cref{S:Transport} giving and
    $\omega_{0, \eps} = \rho_\eps * \omega_0$. Note that
    $\grad u_{0, \eps} \in L^\iny(\R^2)$ with a bound that
    is uniform in $\eps$.
    Then
    \begin{align*}
        Y \cdot \grad u_{0, \eps}
            &= Y \cdot (\rho_\eps * \grad u_0)
            = \brac{Y \cdot (\rho_\eps * \grad u_0) - \rho_\eps*(Y \cdot \grad u_0)}
                + \rho_\eps*(Y \cdot \grad u_0).
    \end{align*}
    Hence,
    \begin{align*}
        \norm{Y \cdot \grad u_{0, \eps}}_{C^\al}
            &\le \norm{\int \rho(x - y) \grad u_0(y) \brac{Y(x) - Y(y)} \, dy}_{C^\al}
                + \norm{\rho_\eps*(Y \cdot \grad u_0)}_{C^\al} \\
            &\le C \norm{\grad u_0}_{L^\iny} \norm{Y}_{C^\al}
                + \norm{Y \cdot \grad u_0}_{C^\al}
            \le C(\omega_0, Y_0),
    \end{align*}
    where we applied \cref{L:SerfatiLemma2Inf} with the kernel $L_4$ of
    \cref{L:SerfatiKernels}.
    \cref{C:graduCor} then gives
    \begin{align*}
        \norm{K * \dv(\omega_{0, \eps} Y)}_{C^\al}
            \le C(\omega_0, Y_0),
    \end{align*}
    since $V(\omega_{0, \eps}) \le V(\omega_0)$.
    (The constant's dependence on $\al$ is immaterial
    for our purposes here.)
    We conclude that some subsequence
    (which we do not relabel) of $(K * \dv(\omega_{0, \eps} Y))$ converges in
    $C^r(\R^2)$ for all $r < \al$ to some $f \in C^\al(\R^2)$.
    Since $\omega_{0, \eps} \to \omega_0$ as Schwartz-class distributions, it follows
    that $K * \dv(\omega_{0, \eps} Y) \to K * \dv(\omega_0 Y)$ (by \cref{e:KdivZDef},
    for instance) and that $f = K * \dv(\omega_0 Y)$.
\end{proof}

\begin{prop}
    The initial conditions that Chemin assumes in \cite{C1998}
    are equivalent to those in \cref{e:AssumptionGen}.
\end{prop}
\begin{proof}
    We first show that the assumptions in \cref{e:AssumptionGen} imply
    those of \cite{C1998}.
    As noted above, \cref{e:AssumptionGen}$_{1, 2, 4}$ are the same as in
    \cite{Chemin1993Persistance}; thus, we must only show that given those three
    assumptions,
    \begin{align*}
        Y_0 \cdot \grad \omega_0
                = \dv(\omega_0 Y_0) - \omega_0 \dv Y_0
                \in C^{\al - 1}
            \implies
        \dv(\omega_0 Y_0) \in C^{\al - 1}.
    \end{align*}
    (This is not immediate, since $\omega_0 \in L^\iny$, $\dv Y_0 \in C^\al$
    does not alone give $\omega_0 \dv Y_0 \in C^{\al - 1}$.)
    
    As part of his proof of the persistence of regularity in
    \cite{C1998}, Chemin shows that $Y(t) \cdot \grad u(t)$, or
    $X_{t, \la}(x, D) v$ in his notation, remains in $C^\al$.
    (See (5.23) and p. 101 of \cite{C1998}.)
    Thus, at time
    zero this yields \ref{I:2}. But this, in turn, yields \ref{I:1}. Hence, Chemin's
    assumptions imply those in \cref{e:AssumptionGen}.
    
    Now assume that \cref{e:AssumptionGen} holds. Then by \cref{L:EquivalentConditions},
    $K * \dv(\omega_0 Y_0) \in C^\al$, and since $\omega_0 \dv Y_0 \in L^1 \cap L^\iny$,
    $K * (\omega_0 \dv Y_0)$ is log-Lipschitz and so lies in $C^\al$. It follows that
    $K * (Y_0 \cdot \grad \omega_0) \in C^\al$ and thus that
    $(K * (Y_0 \cdot \grad \omega_0))^\perp \in C^\al$. But then
    \begin{align*}
        Y_0 \cdot \grad \omega_0
            = \curl \pr{K * (Y_0 \cdot \grad \omega_0)}
            = - \dv \pr{\pr{K * (Y_0 \cdot \grad \omega_0)}^\perp}
            \in C^{\al - 1}.
    \end{align*}
\end{proof}
} 

In both \cite{C1998} and \cite{SerfatiVortexPatch1994}, \cref{e:YTransportAlmost} is used to bound $\norm{Y_\eps(t)}_{C^\al}$ (or $\norm{X_{t, \la}}_{X^\eps}$ in \cite{C1998}) which leads each author to bound $\norm{Y_\eps \cdot \grad u_\eps}_{C^\al}$. Serfati does this\footnote{Rather, this is our interpretation of what he is doing, as the expression for $Y(x) \cdot \grad u(x)$ in \cref{C:graduCor} never appears in \cite{SerfatiVortexPatch1994}.} using \cref{C:graduCor}, introducing the quantity $\dv (\omega_\eps Y_0)$, which is transported by the flow and can be uniformly bounded in $C^{\al - 1}$. Chemin does this on p. 101 of \cite{C1998}. Since $\dv(\omega_\eps Y_\eps)$, $\omega_\eps$, and $\dv Y_\eps$ are each transported by the flow, it follows that $Y_\eps \cdot \grad \omega_\eps = \dv(\omega_\eps Y_\eps) - \omega_\eps \dv Y_\eps$ is also transported by the flow, and it turns out that it too can be uniformly bounded in $C^{\al - 1}$. The two proofs diverge sharply in how they manage all the estimates that result, but this dichotomy of choice in what is to be transported is the origin of the difference between both their initial hypotheses and their end results.

\Ignore{ 
Observe also that the assumption in \cref{e:AssumptionGen}$_4$ is used in \cite{C1998} both in obtaining the bound on  on $(Y_\eps \cdot \grad \omega_\eps)(t)$ in $C^\al$ and in proving the convergence of $Y_\eps \to Y$, while it is required in the proof of \cref{T:Serfati} only for the latter purpose.
} 

The condition $Y_0 \cdot \grad u_0 \in C^\al$ has a precise geometric interpretation: the initial velocity has $C^{1 + \al}$-regularity in the direction of $Y_0$, and this regularity persists over time. The condition $Y_0 \cdot \grad \omega_0 \in C^{\al - 1}$ does not mean that $\omega_0$ has $C^\al$-regularity in the direction of $Y_0$, except in a loose sense, and the condition $\dv (\omega_0 Y_0) \in C^{\al - 1}$ or Serfati's original form of this condition that $K * \dv (\omega_0 Y_0) \in C^\al$ are hard to interpret.

Using the condition $Y_0 \cdot \grad u_0 \in C^\al$ also allows one to view the result of Chemin in \cite{C1998} as an extension of the well-posedness of the Euler equations for $u_0 \in C^{1 + \al}$ (as in Chapter 4 of \cite{C1998}), showing that such regularity in one direction is sufficient and will persist over time. The constants $c_\al$ and $C_\al$ of \cref{T:Serfati}, however, do not depend only upon $\norm{Y_0 \cdot \grad u_0}_{C^\al}$, since if nothing else they also depend upon $\norm{\omega_0}_{L^1 \cap L^\iny}$. To have  well-posedness in the sense of Hadamard, then, would require a definition of the proper functions space and a closer evaluation of the manner in which $c_\al$ and $C_\al$ depend upon $\omega_0$ and $Y_0$.

\section{Examples satisfying the hypotheses of Serfati's theorem}\label{S:Examples}
\noindent We have already seen in \cref{S:ProofOfVortexPatch} that a classical vortex patch satisfies the hypotheses of \cref{T:Serfati} in \cref{e:Assumption}. The following are some additional examples:

\begin{enumerate}[label=(\arabic*), ref=Example \arabic*]  
\item
Suppose that $\omega_0 \in C^\al(\R^2)$. Then choose $\Sigma = \emptyset$ or choose $Y_0$ to be any nonzero constant vector on $\Sigma = \R^2$ with $\omega_0 Y_0 \in C^\al(\R^2)$ so $\dv(\omega_0 Y_0) \in C^{\al - 1}(\R^2)$; either way, \cref{e:Assumption} is satisfied.

\smallskip

\item\label{E:PatchConstantOnBoundary}
Let $\Sigma = \prt \Omega$, where $\Omega$ is a bounded domain having a $C^{1 + \al}$ boundary.
Let
$\omega_0 = f \CharFunc_\Omega$ for $f \in C^\al(\Omega)$ with $f|_{\prt \Omega} \equiv \gamma$, $\gamma$ being a constant. Choose $\phi_0$ and $Y_0 = \grad^\perp \phi_0$ as for a classical vortex patch (see \cref{S:ProofOfVortexPatch}). Now, $\omega_0 - \gamma \CharFunc_\Omega$ and $Y_0$ both lie in $C^\al$, so $\dv((\omega_0 - \gamma \CharFunc_\Omega) Y_0) \in C^{\al - 1}$. But,
\begin{align*}
    \dv((\omega_0 - \gamma \CharFunc_\Omega) Y_0)
        = \dv(\omega_0 Y_0)
            - \gamma \dv(\CharFunc_\Omega Y_0)
        = \dv(\omega_0 Y_0),
\end{align*}
since we showed that $\dv(\CharFunc_\Omega Y_0) = 0$ in \cref{S:ProofOfVortexPatch}. Hence, \cref{e:Assumption} holds and $\prt \Omega_t$ will remain $C^{1 + \al}$ for the same reason as for a classical vortex patch.

\smallskip

\Ignore{ 
\item
Let $\Sigma = \prt \Omega$ as in the previous example.
Let
$
    \omega_0 = f \CharFunc_\Omega,
    f \in C^{1 + \al}(\Omega)
$ and assume that $\grad f$ does not vanish on $\prt \Omega$. Let $\ol{f}$ be an extension of $f$ to $C^{1 + \al}(\R^2)$ and let $Y_0 = \grad^\perp \ol{f} \in C^\al(\R^2)$. Then $\dv (\omega_0 Y_0) $

the same calculations as in the previous example gives \cref{e:Assumption}, so the conclusions of \cref{T:Serfati} hold. We cannot conclude, however, that the boundary of the patch remains $C^{1 + \al}$.

\smallskip
} 

\Ignore { 
\item\label{E:SumVorticities}
A sum, $(\omega_0^k)_{k = 1}^N$, of initial vorticities satisfying \cref{e:Assumption} for $(\Sigma_k)_{k = 1}^N$ as long as the $\Sigma_k$'s are pairwise disjoint.

\smallskip
} 

\item

A finite sum of classical vortex patches or vorticities as in \ref{E:PatchConstantOnBoundary} as long as their boundaries are disjoint. The boundaries will remain $C^{1 + \al}$.

\smallskip

\item\label{E:ShearFlow}
    Let $\phi_0 \in C^{1 + \al}(\R^2)$ with $\abs{\grad \phi_0} \ge c > 0$ on
    all of $\R^2$ have level curves each of which crosses any given vertical line
    exactly once.
    Let $Y_0 = \grad^\perp \phi_0$. Then $Y_0 \in C^\al(\R^2)$, $\dv Y_0 = 0$,
    and its flow lines are level curves of $\phi_0$.
    $Y_0$ describes a shear flow deviating in a controlled way
    from horizontal.
    Define $f_{x_1}(x_2)$ so that the flow line that passes through
    $(x_1, f_{x_1}(x_2))$ also passes through $(0, x_2)$.
    
    Let $W \colon \R \to \R$ be any measurable bounded function supported on some nonempty
    bounded interval $[c, d]$.
    \Ignore { 
    with the additional property that
    \begin{align}\label{e:WProp}
        m\pr{W((x - \delta, x + \delta)) \setminus W(x)} > 0
            \text{ for all }\, x \in \R, \delta > 0.
    \end{align}
    As a specific choice of $W$, we can use, on $[c, d]$,
    \begin{align*}
        W(x)
            = \sum_{n = 1}^\iny \frac{\cos (b^n \pi x)}{n^2},
    \end{align*}
    where $b \in \R$ is nonzero.
    Then $W$ is, in fact, continuous but lies in no $C^\beta(\R)$ for any $\beta > 0$
    not even locally at any point (see \cite{Hardy1916}). In particular, \cref{e:WProp}
    holds.
    } 
    For some fixed $L > 0$ let
    \begin{align*}
        \omega_0(x_1, x_2)
            = \CharFunc_{[-L, L]}(x_1) W(f_{x_1}(x_2))
    \end{align*}
    and let
    \begin{align*}
        \Sigma
            = \set{(x_1, x_2) \colon (x_1, f_{x_1}(x_2)) \in [-L, L] \times [c, d]}.
    \end{align*}
    Observe that $\omega_0$ has the same level curves as $\phi_0$, which are
    all in $C^{1 + \al}$.
    
    Now, \cref{e:Assumption}$_{1, 2}$ are clearly satisfied. Also, formally,
    $\dv (\omega_0 Y_0) = \grad \omega_0 \cdot Y_0 + \omega_0 \dv Y_0 =  0$,
    and we can verify this as for a classical vortex patch.
    \OptionalDetails{
    Let $\varphi$ be any test function in $\Cal{D}(\R^2)$. Then
    \begin{align*}
        (\dv (\omega_0 Y_0), \varphi)
            &= - (\omega_0 Y_0, \grad \varphi)
            = - \int_\Sigma \omega_0(x) Y_0(x) \cdot \grad \varphi(x) \, dx.
    \end{align*}
    We make the change of variables, $x = (x_1, x_2) = (y_1, f^{-1}_{y_1}(y_2))$.
    Then in the integral,
    $
        \omega_0(x)
            = W(y_2)
    $, so
    \begin{align*}
        (\dv (\omega_0 Y_0), \varphi)
            = -\int_c^d W(y_2) \int_{-L}^L
                    (Y_0 \cdot \grad \varphi)\pr{y_1, f^{-1}_{y_1}(y_2)}
                    \abs{f_{y_1}'(f_{y_1}^{-1}(y_2))}
                    \, d y_1 \, d y_2.
    \end{align*}
    
    Let $(\wh{y}_1, \wh{y}_2)$ be a unit coordinate system determined by the
    variables $y_1, y_2$.
    Then $Y_0$ is parallel to $\wh{y}_1$ since the lines of constant $y_2$
    are the flow lines for $Y_0$. Thus,
    \begin{align*}
        (Y_0 \cdot \grad \varphi)\pr{y_1, f^{-1}_{y_1}(y_2)}
            = \pr{(Y_0 \cdot \wh{y}_1) \prt_{y_1} \varphi}
                \pr{y_1, f^{-1}_{y_1}(y_2)}.
    \end{align*}
    Hence, we can integrate by parts to obtain
    \begin{align*}
        (\dv (\omega_0 Y_0), \varphi)
            = \int_c^d W(y_2) \int_{-L}^L
                    \pr{\prt_{y_1} (Y_0 \cdot \wh{y}_1) \, \varphi}
                        \pr{y_1, f^{-1}_{y_1}(y_2)}
                    \abs{f_{y_1}'(f_{y_1}^{-1}(y_2))}
                    \, d y_1 \, d y_2.
    \end{align*}
    But in the $y$-coordinate system, $\prt_{y_1} (Y_0 \cdot \wh{y}_1)
    = \dv Y_0 = 0$, so the integral vanishes.
    } 
    
    Because $\phi_0$ and $\omega_0$ have the same level curves
    and level curves are transported
    by the flow, $\phi(t)$ and $\omega(t)$
    have the same level curves for all time, where $\phi(t)$ is $\phi_0$
    transported by the flow. We conclude from \cref{e:LevelCurveSmoothness}
    that all the level curves of $\omega$ remain $C^{1 + \al}$,
    including the top and bottom boundaries of $\supp \omega(t)$.
    That is, extreme lack of regularity of $\omega_0$ transversal
    to $Y_0$ does not disrupt the regularity of the flow lines.
    \Ignore{ 
   
    Now, $Y_0$ is tangent to the level curves of $\phi_0$.
    We conclude, using the same logic as used at the end of
    \cref{S:ProofOfVortexPatch}, that $Y(t) = \grad^\perp \phi(t)$ for all
    time, where $\phi$ is $\phi_0$ transported by the flow map,
    and hence that $\phi(t) \in C^{1 + \al}$. But $\phi_0$ and
    $\omega_0$ have the same level curves, and level curves are transported
    by the flow map, so $\phi$ and $\omega$
    have the same level curves for all time
    (\cref{e:WProp} makes this easy to see, though it is a stronger condition
    than necessary).
    Hence, we conclude that all the level curves of $\omega$ remain $C^{1 + \al}$,
    including the top and bottom boundaries of $\supp \omega(t)$.
    That is, the potentially extreme lack of regularity of $\omega_0$ transversal
    to $Y_0$ does not disrupt the regularity of the flow lines.
    }
\smallskip

\Ignore{ 
\item
    The idea of the previous example can also be applied to a vortex patch
    with initial vorticity having $C^\al$-regularity only in the direction
    tangential to the boundary. Dealing with the interior regularity
    requires either applying \cref{T:Serfati} twice with one $Y_0$ defined in
    the interior and one near the boundary or the application of
    \cref{T:SerfatiGen} with a family of three vector fields.
} 

\Ignore{ 
\smallskip

\item
    In both the previous examples, a finite number of jump discontinuities
    in the directions transversal to $Y$ could be allowed.
    For an infinite number of discontinuities, verifying that
    $\dv (\omega_0 Y_0) = 0$ becomes technically complex, and some
    additional condition would likely be required.
} 
    
\smallskip

\item
Any vector field satisfying \cref{e:Assumption} or \cref{e:AssumptionGen} plus a $C^\al(\R^2)$ vector field. Because this does not require the choice of the vector field or family of vector fields $Y_0$ to change, if $Y_0$ is divergence-free then \cref{e:LevelCurveSmoothness} will continue to hold. In particular, we conclude that the initially $C^{1 + \al}$ boundary of a classical vortex patch remains $C^{1 + \al}$ even if the initial vorticity is perturbed by a $C^\al(\R^2)$ vector field.
\end{enumerate}

%
%
\OptionalDetails{
Let us return to \ref{E:PatchConstantOnBoundary}, assuming now that $f \in C^{1 + \al}(\Omega)$, but assuming that $f$ is not constant on $\prt \Omega$. Let $\ol{f}$ be an extension of $f$ to $C^{1 + \al}(\R^2)$ and let $Y_0 = \grad^\perp \ol{f} \in C^\al(\Omega)$. Then $\dv (\omega_0 Y_0) = 0$ on $\Omega$.

Testing $\dv (\omega_0 Y_0)$ with a test function $\varphi \in \Cal{D}(\R^2)$ as in \cref{S:ProofOfVortexPatch}, we have
\begin{align*}
    (\dv (\omega_0 Y_0), \varphi)
        &= - (\omega_0 Y_0, \grad \varphi)
        = - \int_\Omega \omega_0 Y_0 \cdot \grad \varphi
        = \int_\Omega \dv(\omega_0 Y_0) \varphi
            - \int_{\prt \Omega} \omega_0 Y_0 \cdot \n \, \varphi \\
        &= - (f Y_0 \cdot \n) \mu, \varphi).
\end{align*}
Here, $\mu$ is the Radon measure on $\R^2$ supported on $\prt \Omega$ for which  $\mu|_\Gamma$ corresponds to Lebesgue (arc length) measure on $\prt \Omega$.
Hence, $\dv (\omega_0 Y_0) = - (f Y_0 \cdot \n) \mu$. Even though $\dv(\omega_0 Y_0) = 0$ on $\Omega$ and on $\Omega^C$, then, it does not follow that $\dv(\omega_0 Y_0) \in C^{\al - 1}(\R^2)$, so we cannot apply \cref{T:Serfati}; at least not with this choice of $Y_0$.


In all of our examples, $\dv Y_0 = 0$, where the assumptions on the initial data of \cite{SerfatiVortexPatch1994, C1998} coincide. Such examples are not only the easiest to create, but, because of \cref{e:LevelCurveSmoothness}, probably the most useful. Examples with $\dv Y_0 \ne 0$ can be created, though, by modifying some of the examples above in trivial ways. In \ref{E:PatchConstantOnBoundary}, for instance, we could make $Y_0$ non-divergence-free in the interior of $\Omega$. The argument in \cref{S:ProofOfVortexPatch} would then give $\dv(\omega_0 Y_0) = \dv Y_0$, which, though it does not vanish, lies in $C^\al(\R^2)$, so \cref{e:Assumption} still holds and $\prt \Omega_t$ still remains $C^{1 + \al}$.

For the 2D Euler equations, initial data for specific examples tends to be specified in terms of vorticity rather than velocity, meaning that the condition $\dv (\omega_0 Y_0) \in C^{\al - 1}$ tends to be easier to verify than $Y_0 \cdot \grad u_0 \in C^\al$.
} 

\Ignore{ 
The following is an example for which $Y_0$ is not divergence-free:

\begin{enumerate}[resume, label=(\arabic*), ref=Example \arabic*]  
\item
    Fix positive constants $c \ne d$.
    Let
    \begin{align*}
        Y_0 &= (-cy, dx) + F(x, y),
            \quad
        \omega = (d x^2 + c y^2) \CharFunc_\Omega,
            \\
        \Omega &= \bigset{(x, y) \colon \frac{x^2}{c^2} + \frac{y^2}{d^2} = 1}.
    \end{align*}
    The vector-valued function $F$ is chosen below.
    Then $\dv Y_0 = \dv F$ and on $\Omega$,
    \begin{align*}
        \dv (\omega_0 Y_0)
            &= \grad \omega_0 \cdot Y_0 + \omega_0 \dv Y_0 \\
            &= (2 d x, 2 c y) \cdot (-cy, dx) + (2 d x, 2 c y) \cdot F(x, y) \\
                &\qquad
                + \dv F(x, y) (d x^2 + c y^2) \\
            &= 2 (d x, c y) \cdot F(x, y)
                + \dv F(x, y) (d x^2 + c y^2).
    \end{align*}
    On $\prt \Omega$, where $d x^2 + c y^2 = c^2 d^2$
    (so $\omega$ is not continuous on $\R^2$), we have
    \begin{align*}
        \dv (\omega_0 Y_0)
            &= 2 (d x, c y) \cdot F(x, y)
                + c^2 d^2 \dv F(x, y).
    \end{align*}
    
    We require now two things: (1) that $F$ not be divergence-free else $Y_0$ will be
    divergence-free and that (2) $\dv (\omega_0 Y_0) = 0$ on $\prt \Omega$.
    We choose, then,
    \begin{align*}
        F(x, y)
            = \rho(x, y) (-cy, dx),
    \end{align*}
    where $\rho$ is a scalar-valued function in $C^{1 + \al}(\R^2)$ arbitrary except
    that $\rho = dx + cy$ in a neighborhood of $\prt \Omega$, which insures both
    requirements.
    
     Thus, $\dv(\omega_0 Y_0) = \grad \rho(x, y) \cdot (-cy, dx) \in C^\al(\Omega)$,
     $\dv (\omega_0 Y_0) = 0$ on $\Omega^C$, and $\dv(\omega_0 Y_0)$ is continuous
     on all of $\R^2$. Thus,
     $\dv(\omega_0 Y_0) \in C^\al(\R^2)$. Finally, $Y_0$ never vanishes on
     $\prt \Omega$ so \cref{e:Assumption} holds with $\Sigma = \prt \Omega$.
\end{enumerate}
} 

%
%
\OptionalDetails{
%
%
\section{Two examples of the matrix $A$}\label{S:CircularVortexPatch}
\noindent It is possible to calculate the matrix $A$ of \cref{e:AEq} for simple geometries when the solution is stationary. We calculate two specific examples: radially symmetric initial vorticity and a special case of the shear flow in \ref{E:ShearFlow} of \cref{S:Examples}.

As simple as these examples are, they provide useful insight into how $\omega(t)$ and $A(t)$ combine to cancel the singularities in $\grad u$. In particular, they highlight how the matrix $A$ has no direct dependence on the magnitude of $\omega$, only upon its irregularities as described by $Y$. So the same pattern of irregularities in the \textit{initial} vorticity would yield the same $A(0)$. Because these examples are both stationary, they do not exhibit $A(t)$ evolving over time: its evolution would, of course, depend upon the magnitude of $\omega$ at time zero.

%
%
\subsection{Radially symmetric vorticity} \RadiallySymmetricExample

\Ignore { 
\subsection{Circular vortex patch}
Let $\Omega$ be the unit disk and $\omega_0 = \CharFunc_\Omega$. The corresponding vector field, $u_0$, is given by
\begin{align*}
    u_0(x) =
        \left\{
        \begin{array}{cl}
            \spacer
            \displaystyle \frac{1}{2} (-x_2, x_1)
                & \text{if } x \in \Omega, \\
            \displaystyle \frac{1}{2} \pr{-\frac{x_2}{r^2}, \frac{x_1}{r^2}}
                & \text{if } x \notin \Omega,
        \end{array}
        \right.
\end{align*}
and $u \equiv u_0$, $\omega \equiv \omega_0$ is a stationary solution to the Euler equations. Then,
\begin{align*}
    \grad u(x) =
        \left\{
        \begin{array}{cl}
            \spacer
            \displaystyle \frac{1}{2}
                \matrix{0 & -1}{1 & 0}
                & \text{if } x \in \Omega, \\
            \displaystyle \frac{1}{2 r^4}
                \matrix{2 x_1 x_2 & x_2^2 - x_1^2}{x_2^2 - x_1^2 & - 2 x_1 x_2}
                & \text{if } x \notin \Omega.
        \end{array}
        \right.
\end{align*}
We can choose $Y = \ol{\chi} \grad^\perp \varphi$, where $\varphi = r - 1$ and $\ol{\chi}$ is a radially symmetric cutoff function equal to $1$ in $B_{3/2}(0) \cap B_{1/2}(0)^C$ and vanishing on $B_{1/4}(0) \cup B_{7/4}(0)$. Let $\chi$ be a radially symmetric cutoff function equal to $1$ in $B_{5/4}(0) \cap B_{3/4}(0)^C$ and vanishing on $B_{1/2}(0) \cup B_{3/2}(0)$. Then on $\supp \chi$, we have
\begin{align*}
    Y
        = \e_\theta
        = \pr{\frac{-x_2}{r}, \frac{x_1}{r}}. 
\end{align*}
Thus, from \cref{e:AExplicit}, abusing notation a bit by writing $\chi(r)$,
\begin{align*}
    A(x)
        = \frac{\chi(\eta^{-1})}{r^2}
                \matrix
                    {-x_1 x_2 & - x_2^2}
                    {x_1^2 & x_1 x_2}
        = \frac{\chi(r)}{r^2}
                \matrix
                    {-x_1 x_2 & - x_2^2}
                    {x_1^2 & x_1 x_2}.
\end{align*}
Therefore,
\begin{align*}
    \Gamma(x)
        &:= \grad u(x) - \omega(x) A(x) \\
        &= \left\{
            \begin{array}{cl}
                \displaystyle
                \spacer
                \frac{\chi(x)}{2 r^2}
                    \matrix{2 x_1 x_2 & - r^2 + 2 x_2^2}
                        {r^2 - 2 x_1^2 & - 2 x_1 x_2}
                + \frac{1 - \chi(r)}{2}
                \matrix{0 & -1}{1 & 0}
                    & \text{if } r < 1, \\
            \displaystyle \frac{1}{2 r^4}
                \matrix{2 x_1 x_2 & x_2^2 - x_1^2}{x_2^2 - x_1^2 & - 2 x_1 x_2}
                    & \text{if } r \ge 1.
            \end{array}
        \right. \\
        &= \left\{
            \begin{array}{cl}
                \displaystyle
                \spacer
                \frac{\chi(x)}{2 r^2}
                    \matrix{2 x_1 x_2 & x_2^2 - x_1^2}
                        {x_2^2 - x_1^2 & - 2 x_1 x_2}
                + \frac{1 - \chi(r)}{2}
                \matrix{0 & -1}{1 & 0}
                    & \text{if } r < 1, \\
            \displaystyle \frac{1}{2 r^4}
                \matrix{2 x_1 x_2 & x_2^2 - x_1^2}{x_2^2 - x_1^2 & - 2 x_1 x_2}
                    & \text{if } r \ge 1.
            \end{array}
        \right.
\end{align*}

It is clear that $\Gamma$ is continuous and lies in $C^\iny(\supp \chi \setminus \prt \Omega)$. To investigate its regularity further, we rewrite it wholly in polar coordinates: 
\begin{align*}
    \Gamma(x)
        = \left\{
            \begin{array}{cl}
                \displaystyle
                \spacer
                \frac{\chi(r)}{2}
                    \matrix{2 \cos \theta \sin \theta & \sin^2 \theta - \cos^2 \theta}
                        {\sin^2 \theta - \cos^2 \theta & - 2 \cos \theta \sin}
                + \frac{1 - \chi(r)}{2}
                \matrix{0 & -1}{1 & 0}
                    & \text{if } r < 1, \\
            \displaystyle \frac{1}{2 r^2}
                    \matrix{2 \cos \theta \sin \theta & \sin^2 \theta - \cos^2 \theta}
                        {\sin^2 \theta - \cos^2 \theta & - 2 \cos \theta \sin}
                    & \text{if } r \ge 1.
            \end{array}
        \right.
\end{align*}
Derivatives of all order with respect to $\theta$ are clearly continuous across $\prt \Omega$. It is easily verified that $\Gamma$ is Lipschitz in $r$. Therefore, in fact, it is $C^\al$ for any $\al \in (0, 1)$, as claimed in \cref{T:Serfati}. Observe, however, that even though $\prt \Omega$ is $C^\iny$, the regularity of $\Gamma$ is no more than Lipschitz.
} 

\subsection{Shear flow}\label{S:ShearFlow} Consider \ref{E:ShearFlow} of \cref{S:Examples} for the special case of horizontal flow lines, but without applying $\CharFunc_{[-L, L]}$ to truncate the initial vorticity in the $x_1$ direction. The Eulerian solution is a stationary shear flow. Technically, \cref{T:Serfati} does not apply since the vorticity does not lie in $L^1(\R^2)$. We nonetheless calculate the matrix $A$, which will apply approximately for the original example (6) for sufficiently short time or sufficiently large $L$.

Choosing $\phi_0(x) = -x_2$, we have $Y = Y_0 = \grad^\perp \phi_0 = (1, 0)$, which never vanishes, so we need apply no cutoff function in \cref{e:AExplicit}. Thus,
\begin{align*}
    A
        = A(t)
        = \frac{1}{\abs{Y}^2}
                \matrix
                    {Y^1 Y^2 & - (Y^1)^2}
                    {(Y^2)^2 & -Y^1 Y^2}
        = \matrix{0 & -1}{0 & 0}.
\end{align*}

Now,
$
    \omega
        = \omega(t)
        = W(x_2).
$
The flow lines are horizontal, being the same as the flow lines for $Y$, and do not depend on $x_1$, so we can write $u = (g(x_2), 0)$ for some function $g$. It follows that $\prt_2 g(x_2) = - W(x_2)$, so that
\begin{align*}
    u(t, x_1, x_2)
        = \pr{C -\int_c^{x_2} W(s) \, ds, 0},
\end{align*}
where $C$ can be chosen arbitrarily. We can add the assumption that 
\[
\int_c^d W(s)ds = 0
\]
so that we can chose $C = 0$ and $u^1(x)$ will vanish outside of the infinite strip $\R \times [c, d]$. (The velocity will still not vanish as $x_1 \to \pm \iny$, but that reflects not truncating the support in the $x_1$ direction.) Thus,
\begin{align*}
    \grad u(x)
        = \grad u(t, x)
        = \matrix{0 & -W(x_2)}{0 & 0}.
\end{align*}

It follows that
\begin{align*}
    \grad u - \omega A
        = \matrix{0 & -W(x_2)}{0 & 0}
            - W(x_2) \matrix{0 & -1}{0 & 0}
        = 0,
\end{align*}
which certainly satisfies \cref{e:AEq,e:ABound}.
} 

%
%
\OptionalDetails{

%
%
\section{Concluding remarks}\label{S:Conclusion}

\noindent Serfati's results are most useful at obtaining the regularity of geometric structures in the Eulerian flow a la \cref{e:LevelCurveSmoothness} and at obtaining the local propagation of \Holder regularity as in \cref{T:LocalPropagation}. The first requires that $Y_0$ be divergence-free and that any lack of regularity in the initial vorticity be transversal to $Y_0$, the second requires that the initial vorticity be regular in some open set.

Because Serfati's approach is elementary, avoiding as it does paradifferential calculus, and relying primarily upon estimates of singular integrals (\cref{L:SerfatiLemma2}), it is more easily adapted to other purposes than Chemin's approach. As topics for future work, we discuss several such settings in the subsections that follow.

\subsection{Bounded domains and exterior domains}\label{S:BoundedDomains}
In \cite{Depauw1998, Depauw1999}, Depauw studies a vortex patch lying in a bounded domain, $\Omega$, with smooth boundary. He uses extension operators to transfer the problem, as much as possible, to the full plane, where he applies the tools of paradifferential calculus, somewhat in the spirit of \cite{C1998}. He is able to recover \cref{T:VortexPatch} for a bounded domain when the patch initially does not touch the boundary, and is able to obtain a short-time result when the patch does initially touch the boundary. (If a patch starts away from the boundary it remains away from the boundary for all time, since the flow map at time $t$ is a diffeomorphism.)

Serfati's approach adapts itself quite well to a bounded domain. The Biot-Savart law, $u = K * \omega$, is replaced by 
\[
u(t, x) = \int_\Omega K_\Omega(x, y) \omega(y) \, dy.
\]
When $\Omega$ is simply connected, $K_\Omega$ can be written explicitly. For simplicity, let $\Omega$ be the unit disk. Then 
\[
K_\Omega(x, y) = K(x - y) - K(x - y^*),
\]
where $y^* = y/\abs{y}^2$ is reflection across the unit disk. The key estimate required on $K_\Omega$ in applying \cref{L:SerfatiLemma2} is on its gradient, $\grad_x K_\Omega(x, y)$. The first term in $K_\Omega$ can be treated as in the full plane. For the second term, existing estimates on $\grad_x$ of it (see \cite{AKLL2014}, for instance) would give the required bounds as long as the patch starts away from the boundary. How much of Depauw's result for patches that touch the boundary can be obtained in this way, and whether his results could be strengthened, remain to be seen.

The same approach could also be taken for domains exterior to a simply connected obstacle.

Note there is one point in the proof of \cref{T:Serfati} where paradifferential calculus enters: in the application of Theorem 3.14 of \cite{BahouriCheminDanchin2011} in the proof of \cref{L:RegularitywY}. The proof of this theorem in \cite{BahouriCheminDanchin2011} relies on paradifferential calculus. The theorem itself, though, is easily extended to apply to a bounded (or exterior) domain by applying an extension operator and then restricting to the original domain. This works because the velocity is tangential to the boundary.

\subsection{Bounded vorticity, bounded velocity solutions}\label{S:BoundedBounded}
For a classical vortex patch, and for Chemin's and Serfati's more general results, the velocity field always decays at infinity. If we assume only that $\omega_0 \in L^\iny$ with $u_0 \in L^\iny$, existence and uniqueness of solutions is known to hold. This was shown originally by Serfati in \cite{Serfati1995A} (see also \cite{AKLL2014, KBounded2014, Taniuchi2004, TaniuchiEtAl2010}).

For such solutions, the Biot-Savart law no longer holds, but we do have the so-called \textit{Serfati identity},
\begin{align*}
        u^j(t&) - (u^0)^j
            = (a K^j) *(\omega(t) - \omega^0)
        - \int_0^t \pr{\grad \grad^\perp \brac{(1 - a) K^j}}
        \stardot (u \otimes u)(s) \, ds,
\end{align*}
where $M \stardot N = M^{mn} * N^{mn}$, and $a$ is as in \cref{D:Radial}.
Hence,
\begin{align*}
        \grad u^j(t) - (\grad u^0)^j
            = \grad \brac{(a K^j) *(\omega(t) - \omega^0)}
        - \int_0^t \grad \pr{\grad \grad^\perp
            \brac{(1 - a) K^j}}
        \stardot (u \otimes u)(s) \, ds.
\end{align*}
The first term can be handled as in the proof of \cref{T:Serfati}. The second lies in $C^\iny$, with the $C^k$ norm bounded by $C_k \norm{u}_{L^\iny}^2$.
Thus, it is likely that Serfati's vortex patch approach extends to bounded vorticity, bounded velocity solutions in the full plane.
One difficulty, however, is that $K * \dv Z \in C^\al$ is no longer equivalent to $\dv Z \in C^{\al - 1}$, since $K * \dv Z$ no longer makes sense.

Well-posedness of bounded vorticity, bounded velocity solutions exterior to a simply connected obstacle was proved in \cite{AKLL2014}. Though there are additional difficulties, the approach described above also applies in this case, at least if the discontinuities in $\omega_0$ are bounded away from the boundary.

\subsection{$C^{k + \al}$ regularity and vortex patches with singularities}
Vortex patches with singularities are studied in \cite{Chemin1995SingularVortexPatches, Danchin1997SingularVortexPatches, Danchin1997Cusp, Danchin2000Cusp}. We focus here on the result of Danchin's \cite{Danchin1997SingularVortexPatches}, in which it is shown that if the boundary of a vortex patch is \Holder continuous except on a closed subset of the boundary it remains so for all time as the boundary is transported by the flow. The subset on which the boundary (potentially) fails to be \Holder continuous is transported by the flow as well. 

Danchin obtains this result as a special case of a more general theorem in which he adapts Chemin's definition of a family of conormal vector fields in \cite{C1998} to both allow singularities and to deal with derivatives up to a fixed order, $k \ge 0$. It would be interesting to adapt Serfati's approach to obtain the same result as Danchin, and to then also apply it in the settings of \cref{S:BoundedDomains,S:BoundedBounded}.

} 

\appendix
\section{Proofs of lemmas}\label{A:ProofsOfLemmas}

\noindent In this section we prove the lemmas stated in \cref{S:Lemmas}.

\subsection{Proof of \cref{L:SerfatiLemma1}}
Let 
\begin{align*}
    E = \matrix{a & -c}{c & a}, \quad
    F =\matrix{d & -b}{-c & a}
    = (\det M) M^{-1},
\end{align*}
so that 
\begin{align*}
    E E^T
        =(\det E ) I, \quad
    M F
        = (\det M) I,
\end{align*}
$I$ being the $2 \times 2$ identity matrix.
Therefore, $B$ can be expressed as 
\begin{align}\label{e:B}
    B = \frac{E E^T B M F}{\det(M E )}.
\end{align}

We now compute $E^T B M$. Let $B = (B_{ij})$. Then, 
\begin{align*} 
    E^T B M
        &= \matrix{a^2B_{11}+acB_{12}+acB_{21}+c^2B_{22}
                    & abB_{11}+adB_{12}+bcB_{21}+cdB_{22}}
                {-acB_{11}-c^2B_{12}+a^2B_{21}+acB_{22}
                    & -bcB_{11}-cdB_{12}+abB_{21}+adB_{22}} \\
        &=: (l_{ij}).
\end{align*} 
Since $B$ is symmetric, we have 
\begin{align*} 
    &l_{11} =\matrix{a}{c}^T B \matrix{a}{c}, \quad
    l_{12}= \matrix{b}{d}^T B \matrix{a}{c}, \quad
    l_{21} =\matrix{-c}{a}^T B \matrix{a}{c}, \\
    &l_{22} =\det M \tr B - \matrix{d}{-b}^T B \matrix{a}{c}.
\end{align*} 
Therefore, we can rewrite $E^T B M$ as
\begin{align*} 
    E^T B M
        = \matrix
            {\matrix{a}{c}^T B \matrix{a}{c} & \matrix{b}{d}^T B \matrix{a}{c}}
            {\matrix{-c}{a}^T B \matrix{a}{c} & \matrix{d}{-b}^T B \matrix{a}{c}}
        +
            \matrix{0 & 0}{0 & \det M \tr B}.
\end{align*} 
Going back to \cref{e:B}, we obtain 
\begin{align*} 
    B
        &= \frac{E}{\det M \det E}
            \matrix
                {\matrix{a}{c}^T B \matrix{a}{c} & \matrix{b}{d}^T B \matrix{a}{c}}
                {\matrix{-c}{a}^T B \matrix{a}{c} & \matrix{d}{-b}^T B \matrix{a}{c}}
                F
                    + \frac{\tr B}{\det E} \matrix{c^2 & -ac}{-ac & a^2} \\
        &= \frac{E}{\det M (a^2 + c^2)}
            \matrix
                {(a, c) \cdot B M_1 & (b, d) \cdot B M_1}
                {(-c, a) \cdot B M_1 & (d, -b) \cdot B M_1}
            F
            + \frac{\tr B}{a^2 + c^2} \matrix{c^2 & -ac}{-ac & a^2}.
\end{align*}

Because all norms on finite-dimensional space are equivalent, $\abs{G H} \le C \abs{G} \abs{H}$ for any $2 \times 2$ matrices $G$, $H$. Therefore,
\begin{align*}
    \abs{B}
        &\le \frac{C}{\det M (a^2 + c^2)} \abs{E}
            \abs{\matrix{(a, c) \cdot B M_1 & (b, d) \cdot B M_1}
                {(-c, a) \cdot B M_1 & (d, -b) \cdot B M_1}
                }
            \abs{F}
            + C \frac{\abs{\tr B} (a^2 + c^2)}{a^2 + c^2}.
\end{align*}
But,
\begin{align*}
    \abs{E} \le C (a^2 + c^2)^{1/2}, \quad
    \abs{F} \le C (a^2 + b^2 + c^2 + d^2)^{1/2},
\end{align*}
and
\begin{align*}
    &\abs{\matrix{(a, c) \cdot B M_1 & (b, d) \cdot B M_1}
                {(-c, a) \cdot B M_1 & (d, -b) \cdot B M_1}
                }
        \le C \pr{\abs{(a, c)}^2 \abs{B M_1}^2 + \abs{(b, d)}^2 \abs{B M_1}^2}^{1/2} \\
        &\qquad
        \le C (a^2 + b^2 + c^2 + d^2)^{1/2} \abs{B M_1}.
\end{align*}
Hence,
\begin{align*}
    \displaystyle \abs{B}
        &\le C \frac{a^2 + b^2 + c^2 + d^2}{\det M (a^2 + c^2)^{1/2}}
             \abs{B M_1}
            + C \abs{\tr B} \\
        &\le C \frac{(a^2 + b^2 + c^2 + d^2)^{1/2}}{\det M}
             \abs{B M_1}
            + C \abs{\tr B}
        = C \frac{\abs{M}}{\det M}
             \abs{B M_1}
            + C \abs{\tr B}.
\end{align*}
\qed

\subsection{Proof of \cref{L:SerfatiLemma2}}
We need to show that 
\begin{align*}
    \abs{\int L(x,z) \brac{f(z) - f(x)} \, dz
            - \int L(y,z)\brac{f(z) - f(y)} \, dz}
        \le C \norm{L}_* \norm{f}_{C^\alpha} \abs{x - y}^\alpha.
\end{align*}
We set $h = \abs{x - y}$ and write
\begin{align*} 
    \int L&(x,z) \brac{f(z) - f(x)} \, dz
            -\int L(y,z) \brac{f(z) - f(y)} \, dz \\
    &=\int_{\abs{x - z} \le 2h} L(x,z) \brac{f(z) - f(x)} \, dz
            - \int_{\abs{x - z} \le 2h} L(y,z) \brac{f(z) - f(y)} \, dz \\
    &+ \brac{\int_{\abs{x - z} > 2h} L(x,z) \brac{f(z) - f(x)} \, dz
        - \int_{\abs{x - z} > 2h} L(y,z) \brac{f(z) - f(y)} \, dz} \\
    &=: \text{I + II + III}.
\end{align*}
We first estimate $\text{I}$ by 
\begin{align*} 
    \abs{\text{I}}
        &\le \int_{\abs{x - z} \le 2h} \abs{L(x,z)}
            \abs{x - z}^\alpha \frac{\abs{f(z) - f(x)}}{\abs{x - z}^\alpha} \, dz \\
        &\le \norm{f}_{C^\alpha} \int_{\abs{x - z} \le 2h}
            (\abs{L(x,z)} \abs{x - z}^2 ) \abs{x - z}^{\alpha-2} \, dz \\
        &\le  C \norm{f}_{\dot{C}^\alpha} \norm{L}_* \int_{\abs{x - z} \le 2h}
            \abs{x - z}^{\alpha-2} \, dz
        \le C \al^{-1} \norm{L}_* \norm{f}_{\dot{C}^\alpha} h^\alpha.
\end{align*}
If $\abs{x - z}\le 2h$ and $\abs{x - y}=h$, we have $\abs{y - z} \le 3h$ and thus 
\begin{align*}
    \abs{\text{II}}
        \le C \abs{\int_{\abs{y - z}\le 3h} L(y,z) \brac{f(z) - f(y)} \, dz}
        \le C \al^{-1} \norm{L}_* \norm{f}_{\dot{C}^\alpha} h^\alpha.
\end{align*}
To estimate $\text{III}$, we decompose it further (using \cref{e:LPVinL1}) into
\begin{align*} 
    \text{III}
        &= \brac{f(y)-f(x)} \int_{\abs{x - z} > 2h} L(x,z) \, dz
            + \int_{\abs{x - z} > 2h} \brac{f(z) - f(y)}
                (L(x,z) -L(y,z)) \, dz \\
        &=: \text{III}_1+\text{III}_2. 
\end{align*}
We immediately have that
\begin{align*}
    \abs{\text{III}_1}
        \le \norm{f}_{\dot{C}^\al} \int_{\abs{x - z} > 2h} \abs{x - z}^{\al - 2} \, dz
        \le C \al^{-1} \norm{L}_* \norm{f}_{C^\alpha} h^\alpha.
\end{align*}
We finally estimate $\text{III}_2$. Since
\begin{align*}
    \abs{L(x,z) - L(y,z)}
        \le \abs{\nabla_{x} L(\tilde{x},z)} \abs{x - y}, \quad
            \tilde{x}=ty + (1 - t)x, \ \text{for some}\ t\in [0,1], 
\end{align*}
we have 
\begin{align*} 
    \abs{\text{III}_2}
        &\le \int_{\abs{x - z} > 2h} \abs{\brac{f(z) - f(y)}
            (L(x,z) -L(y,z))} \, dz  \\
        &\le \int_{\abs{x - z} > 2h} \abs{f(z) - f(y)}
            \abs{\nabla_{x} L(\tilde{x},z)} \abs{x - y} \, dz \\
        &= h \int_{\abs{x - z} > 2h} \frac{\abs{f(z) - f(y)}}{\abs{y - z}^\alpha}
            (\abs{\nabla_{x} L(\tilde{x},z)}
            \abs{\tilde{x} - z}^{3}
            \frac{\abs{y - z}^\alpha}{\abs{\tilde{x} - z}^{3}} \, dz\\
        &\le \norm{L}_* \norm{f}_{C^\alpha} h  \int_{\abs{x - z} > 2h}
            \frac{\abs{y - z}^\alpha}{\abs{\tilde{x} - z}^{3}} \, dz
        \le C \norm{L}_* \norm{f}_{C^\alpha} h
            \int_{\abs{\tilde{x} - z}> h} \frac{1}{\abs{\tilde{x} - z}^{3-\alpha}} \, dz \\
        &\le C (1 - \al)^{-1} \norm{L}_* \norm{f}_{C^\alpha} h^\alpha ,
\end{align*}
where we used the inequalities $\abs{\tilde{x} - y} \le (1-t) \abs{x - y} \le h$ and $\abs{\tilde{x} - z} \ge \abs{x - z}-t\abs{x - y} \ge h$ to obtain $\abs{y - z}\le \abs{\tilde{x} - y} + \abs{\tilde{x} - z}\le 2\abs{\tilde{x} - z}$. Collecting all terms, we have \cref{e:KernelEstimate}.

Finally, if \cref{e:LHomo} holds then $\int_{\R^2} L(\cdot, z) f(\cdot) \, dz = 0$, and \cref{e:KernelEstimateHomo} follows from \cref{e:KernelEstimate}.
\qed

\subsection{Proof of \cref{L:SerfatiLemma2Inf}}

In light of \cref{L:SerfatiLemma2}, we need only bound the corresponding $L^\iny$ norms. We have,
\begin{align*}
    &\norm{\PV \int_{\R^2} L(\cdot, z) \brac{f(z)-f(\cdot)} \, dz}_{L^\iny} \\
        &\qquad
        \le \norm{f}_{\dot{C}^\al}
            \norm{\lim_{h \to 0} \int_{B_h(x)^C \cap B_1(x)}
            \abs{L(x, z)} \abs{x - z}^\al \, dz}_{L^\iny_x}
        + 2 \norm{f}_{L^\iny} \sup_{x \in \R^2} \norm{L(x, \cdot)}_{L^1(B_{1}(x)^C)}
        \\
        &\qquad
        \le \norm{L}_* \norm{f}_{\dot{C}^\al}
            \norm{\lim_{h \to 0} \int_{B_h(x)^C \cap B_1(x)}
            \abs{x - z}^{\al - 2} \, dz}_{L^\iny_x}
        + 2 \norm{L}_{**} \norm{f}_{L^\iny} \\
        &\qquad
        \le C \al^{-1} \norm{L}_{**} \norm{f}_{C^\alpha}.
\end{align*}

\subsection{Proof of \cref{L:EquivalentConditions}}\label{S:EqCond}
Suppose that $\dv Z\in C^{\alpha-1}(\R^2)$ with $Z \in L^{\infty} (\R^2)$. We have,
\begin{align*}
    \grad \Cal{F} * \dv Z
        = m(D) \dv Z
        = n_i(D) Z^i,
\end{align*}
where $m$ and $n_i$, $i = 1, 2$, are the Fourier-multipliers,
\begin{align*}
    m(\xi) = \frac{\xi}{\abs{\xi}^2}, \quad
    n(\xi) = \frac{\xi^i \xi}{\abs{\xi}^2},
\end{align*}
up to unimportant multiplicative constants.
We can thus write $\grad \Cal{F} * \dv Z$ using a Littlewood-Paley decomposition in the form,
\begin{align}\label{e:KdivZDef}
    \grad \Cal{F} * \dv Z
        = \sum_{j \ge -1} \Delta_j m(D) \dv Z
        = \Delta_{-1} n_i(D) Z^i
            + \sum_{j \ge 0} \Delta_j m(D) \dv Z,
\end{align}
where $\Delta_j$ are the nonhomogeneous Littlewood-Paley operators (dyadic blocks). We use the notation of \cite{BahouriCheminDanchin2011} and refer the reader to Section 2.2 of that text for more details. The sum in \cref{e:KdivZDef} will converge in the space $\Cal{S}'(\R^2)$ of Schwartz-class distributions as long as $\dv Z \in \Cal{S}'(\R^2)$.

Now, for any noninteger $r \in [-1, \iny)$,
\begin{align*}
    \sup_{j \ge -1} 2^{jr} \norm{\Delta_j f}_{L^\iny}
\end{align*}
is equivalent to the $C^r$ norm of $f$ (see Propositions 6.3 and 6.4 in Chapter II of \cite{Chemin2004Handbook}). Also,
\begin{align*}
    \norm{\Delta_j m(D) f}_{L^\iny}
        \le C 2^{-j} \norm{\Delta_j f}_{L^\iny}, \quad
    \norm{\Delta_{-1} n_i(D) f}_{L^\iny}
        \le C \norm{f}_{L^\iny}
\end{align*}
for all $j \ge 0$ and $i = 1, 2$. The first inequality follows from Lemma 2.2 of \cite{BahouriCheminDanchin2011} because $m$ is homogeneous of degree $-1$. The second inequality follows by a direct calculation, using only that $n_i$ is bounded.
\OptionalDetails{
    \begin{align*}
        \norm{\Delta_{-1} n_i(D) f}_{L^\iny}
            &= \norm{\FTR\pr{\chi \FTF(n_i(D) f)}}_{L^\iny}
            = \norm{\FTR\pr{\chi n_i \wh{Z}^i}}_{L^\iny} \\
            &\le \norm{\FTR\pr{\chi n_i}}_{L^1} \norm{f}_{L^\iny}
            \le C \norm{f}_{L^\iny}.
    \end{align*}
}

Hence,
\begin{align*}
    \norm{\grad \Cal{F} * \dv Z}_{C^{\alpha}}
        &\le \left\|\Delta_{-1} n_i(D) Z^i\right\|_{L^{\infty}}
            +\sup_{j\ge 0} 2^{j\al}\left\| \Delta_{j} m(D)\dv Z \right\|_{L^{\infty}} \\
        &\leq C \|Z\|_{L^{\infty}} + \sup_{j\ge 0} 2^{j(\al -1)}\left\| \Delta_{j}\dv Z \right\|_{L^{\infty}}\\
        & \leq C \|Z\|_{L^{\infty}}
            + C \left\| \dv Z\right\|_{C^{\alpha-1}},
\end{align*}
which gives the second inequality in \cref{e:EquivalentConditionsBound}.



Conversely, assume that $v :=\grad \mathcal{F}\ast \dv Z \in C^\alpha(\R^2)$. Then, 
\begin{equation*} 
 \begin{split}
   & \dv v=\Delta F \ast \dv Z=\dv Z.
 \end{split}
\end{equation*}
Therefore, we conclude that $\dv Z \in C^{\alpha - 1}(\R^2)$ and obtain the first inequality in \cref{e:EquivalentConditionsBound}.
\qed

%
%
\section{Calculations involving $\grad u$}\label{A:graduCalcs}

\noindent Recall from \cref{S:Notation} that $\grad u = Du$, the Jacobian matrix of $u$ (rather than its transpose, as it is sometimes defined).

\cref{L:graduExp} is a standard way of expressing $\grad u$; it is, in fact, the decomposition into its antisymmetric and symmetric parts. It follows, for instance, from Proposition 2.17 of \cite{MB2002}.
In \cref{L:graduYLikeLemma}, we inject the $C^\al$-vector field $Y$ into the formula given in \cref{L:graduExp}; the expression that results lies at the heart of the proof of \cref{T:Serfati}, via \cref{C:graduCor}. Finally, \cref{L:ConvEq} justifies switching between two ways of calculating principal value integrals.
\ifbool{ForSubmission}{
    We leave the proofs of \cref{L:graduYLikeLemma,L:ConvEq} to the reader.
    }
    {
    }

Our applications of these results are to our approximate solutions, which lie in $L^1 \cap C^\iny$.

\begin{lemma}\label{L:graduExp}
    Let $u$ be a divergence-free vector field in $(L^1 \cap C^\iny)(\R^2)$ with vorticity $\omega$. Then
    \begin{align*}
        \grad u(x)
            = \frac{\omega(x)}{2} \matrix{0 & -1}{1 & 0}
                + \PV \int \grad K(x - y) \omega(y) \, dy,
    \end{align*}
    where $K = \grad^\perp \Cal{F}$ is the Biot-Savart kernel.
    The first term is the antisymmetric, the second term the symmetric part
    of $\grad u(x)$.
\end{lemma}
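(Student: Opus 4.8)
The plan is to compute $\grad u$ directly from the Biot--Savart law $u = K * \omega$, being careful about the non-integrable singularity of $\grad K$ at the origin, which is precisely what produces the local antisymmetric term. Since $u \in C^\iny$ and $\omega = \curl u$ is smooth and integrable (with the attendant spatial decay), all of the manipulations below are classical and the interchanges of limits are justified.

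First I would shift the $x$-dependence onto the smooth factor by the change of variables $z = x - y$, writing $u^i(x) = \int K^i(z)\,\omega(x - z)\,dz$ and differentiating under the integral sign to get $\prt_j u^i(x) = \int K^i(z)\,(\prt_j\omega)(x - z)\,dz = -\int K^i(z)\,\prt_{z_j}[\omega(x - z)]\,dz$. Next I would integrate by parts over the region $\set{\abs{z} > h}$, which moves the derivative onto $K^i$ at the cost of a boundary integral over the circle $\set{\abs{z} = h}$, whose outward normal relative to the exterior region is $-z/\abs{z}$:
\begin{align*}
    \prt_j u^i(x)
        = \int_{\abs{z} > h} \prt_{z_j} K^i(z)\, \omega(x - z)\, dz
            + \int_{\abs{z} = h} K^i(z)\, \omega(x - z)\, \frac{z_j}{\abs{z}}\, dS_z.
\end{align*}

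I would then let $h \to 0$. The interior term converges to the principal value $\PV\int \grad K(x - y)\,\omega(y)\,dy$ (with the convention $(\grad K)^{ij} = \prt_j K^i$), the cancellation needed for convergence coming from the vanishing angular average of the degree $-2$ homogeneous kernel $\grad K$. In the boundary term I would replace $\omega(x - z)$ by $\omega(x)$ (the error vanishing as $h \to 0$ by continuity) and evaluate the remaining $h$-independent angular integrals $\int_{\abs{z} = h} K^i(z)\frac{z_j}{\abs{z}}\,dS_z$ explicitly in polar coordinates using $K(z) = \frac{1}{2\pi} z^\perp/\abs{z}^2$. A short computation gives $0$ on the diagonal, $-\omega(x)/2$ for $(i,j) = (1,2)$, and $+\omega(x)/2$ for $(i,j) = (2,1)$; that is, the matrix $\frac{\omega(x)}{2}\matrix{0 & -1}{1 & 0}$.

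Finally I would identify the two pieces as the antisymmetric and symmetric parts of $\grad u$. The boundary matrix is manifestly antisymmetric. For the principal value term, symmetry ($\prt_2 K^1 = \prt_1 K^2$) and trace-freeness ($\prt_1 K^1 = -\prt_2 K^2$) away from the origin follow from $\curl K = \Delta\Cal{F} = 0$ and $\dv K = 0$ there, exactly as used in the proof of \cref{L:AMatrixCentralLemma}; since the principal value excises the origin, these pointwise identities force the second term to be symmetric. The main obstacle here is bookkeeping rather than depth: keeping the sign of the boundary normal and the index convention $(\grad u)^{ij} = \prt_j u^i$ consistent, and justifying that the interior integral genuinely converges to a principal value. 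As a cross-check I would note that the same identity follows from the classical Hessian-of-potential formula $\prt_i\prt_j(\Cal{F} * \omega) = \PV\int \prt_i\prt_j\Cal{F}(x - y)\,\omega(y)\,dy + \tfrac{1}{2}\delta_{ij}\,\omega(x)$ applied to $\grad u = \grad\grad^\perp(\Cal{F} * \omega)$, the constant $\tfrac{1}{2} = 1/n$ for $n = 2$ reproducing the coefficient of the antisymmetric term.
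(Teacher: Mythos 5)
Your proof is correct, and its computational core --- excising a disk of radius $h$, integrating by parts, and evaluating the angular average of $K^i(z)z_j/\abs{z}$ on the boundary circle to produce $\frac{\omega(x)}{2}\matrix{0 & -1}{1 & 0}$ --- is exactly the mechanism driving the paper's argument. The organization, however, is genuinely different. The paper works by duality: it pairs $\grad u$ with a test matrix $M$, writes $(\grad u, M) = -(K*\omega, \dv M)$, applies Fubini, integrates by parts in the test-function variable, and then proves the identity first for compactly supported smooth $\omega$ before extending to general $\omega$ via the truncation $\omega_n = a_{1/n}\omega$ and an $H^{-1}$ limit. Your pointwise route is shorter and more elementary, but the one step that your phrase ``the interchanges of limits are justified'' passes over too quickly is precisely what that approximation step exists to handle: differentiating under the integral sign in $\int K^i(z)\,\omega(x-z)\,dz$ and then moving the derivative back onto $K^i$ over the \emph{unbounded} region requires either $\grad\omega \in L^1$ (which is not among the hypotheses) or compact support of $\omega$, or else a separate treatment of the far field --- for instance, integrating by parts over $\set{h < \abs{z} < R}$ and sending $R \to \iny$ along a subsequence on which $\int_{\abs{z}=R}\abs{\omega(x - z)}\,dS$ vanishes, which exists because $\omega \in L^1$ and $\abs{K^i(z)} \le C/R$ there. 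With that one sentence added your argument is complete; the remaining points (convergence of the truncated integral to the principal value via the vanishing angular mean of $\grad K$, the symmetry of the $\PV$ term from $\dv K = 0$ and $\curl K = 0$ away from the origin, and the cross-check via the Hessian formula for $\grad\grad^\perp(\Cal{F}*\omega)$) are all correct as stated.
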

\OptionalDetails{ 
\vspace{-1.25em}
\begin{proof}
We prove this first for $\omega \in C^\iny$ compactly supported.

Let $M$ be any test function in $(\Cal{D}(\R^2))^{2 \times 2}$. Then
\begin{align*}
    (\grad u, M)
        &= (Du, M)
        = (\prt_j u^i, M^{ij})
        = - (u^i, \prt_j M^{ij})
        = - (u, \dv M)
        = - (K * \omega, \dv M) \\
        &= - \int \int K(x - y) \omega(y) \, dy \dv M(x) \, dx \\
        &= - \int \int K(x - y) \omega(y) \dv M(x) \, dx \, dy \\
        &= - \int \lim_{\delta \to 0} \int_{B_\delta^C(y)} K(x - y) \dv M(x) \, dx
                \, \omega(y) \, dy \\
        &= \int \lim_{\delta \to 0} \int_{B_\delta^C(y)}
                \grad_x K(x - y) \cdot M(x) \, dx
                \, \omega(y) \, dy \\
        &\qquad
            + \int \lim_{\delta \to 0} \int_{\prt B_\delta(y)}
                (M(x) \n_x) \cdot K(x - y) \, dx \, \omega(y) \, dy
        =: I + II.
\end{align*}
We were able to apply Fubini's theorem to change the order of integration because the integrand lies in $L^1(\R^2 \times \R^2)$.

Consider now the boundary integral. In polar coordinates, 
\begin{align*}
    \n_x
        = (\cos \theta, \sin \theta),
    \qquad
    K(x - y)
        = \frac{\n_x^\perp}{2 \pi \delta}
        = \frac{(-\sin \theta, \cos \theta)}{2 \pi \delta}.
\end{align*}
Now, $\abs{M(x) - M(y)} \abs{K(x - y)} \le C$, so
\begin{align*}
    &\abs{\int \lim_{\delta \to 0} \int_{\prt B_\delta(y)}
                ((M(x) - M(y)) \n_x) \cdot K(x - y) \, dx \, \omega(y) \, dy} \\
        &\qquad
        \le \int \lim_{\delta \to 0} \int_{\prt B_\delta(y)}
                \abs{M(x) - M(y)} \abs{K(x - y)} \, dx \, \abs{\omega(y)} \, dy \\
        &\qquad
        \le C \norm{\omega}_{L^\iny} \int \lim_{\delta \to 0} \int_{\prt B_\delta(y)}
                \, dx \, dy
        = C \norm{\omega}_{L^\iny} \int \lim_{\delta \to 0} \delta \, dy
        = 0,
\end{align*}
so we can replace $M(x)$ with $M(y)$ in \textit{I}. Writing
\begin{align*}
    M(y)
        = \matrix{a & b}{c & d},
\end{align*}
the boundary integral becomes, in the limit as $\delta \to 0$,
\begin{align*}
    \int_0^{2 \pi} &\brac{\matrix{a & b}{c & d} \matrix{\cos \theta}{\sin \theta}}
            \cdot \matrix{-\sin \theta}{\cos \theta} \, d \theta \\
        &= \int_0^{2 \pi} (a \cos \theta + b \sin \theta,
                            c \cos \theta + d \sin \theta)
            \cdot (-\sin \theta, \cos \theta) \, d \theta \\
        &= \int_0^{2 \pi} (-a \cos \theta \sin \theta - b \sin^2 \theta
                            + c \cos^2 \theta + d \sin \theta \cos \theta)
                \, d \theta \\
        &= \frac{c - b}{2}
        = \frac{M^{21}(y) - M^{12}(y)}{2}.
\end{align*}
Thus,
\begin{align*}
    II
        = \int \frac{M^{21}(y) - M^{12}(y)}{2} \omega(y) \, dy
        = \pr{\frac{\omega}{2} \matrix{0 & -1}{1 & 0}, M}.
\end{align*}

Now write $I$ as
\begin{align*}
    I
        = \int \lim_{\delta \to 0} F_\delta(y) \, \omega(y) \, dy,
\end{align*}
where
\begin{align*}
    F_\delta(y)
        = \int_{B_\delta^C(y)} \grad_x K(x - y) \cdot M(x) \, dx.
\end{align*}
Let $R > 0$ be such that $\supp M \subseteq B_R$. If $\delta < 1$ then
\begin{align*}
    F_\delta(y)
        &= \int_{B_\delta^C(y) \cap B_1(y)} \grad_x K(x - y) \cdot M(x) \, dx
            + \int_{B_1^C(y)} \grad_x K(x - y) \cdot M(x) \, dx \\
        &=: I_1 + I_2.
\end{align*}

Because $\grad K$ is a singular integral kernel, each component of it integrates to zero over circles centered at the origin. Also, $\abs{M(x) - M(y)} \le C \abs{x - y}$, so
\begin{align*}
    \abs{I_1}
        &= \abs{\int_{B_\delta^C(y) \cap B_1(y)} \grad_x K(x - y)
            \cdot \pr{M(x) - M(y)} \, dx} \\
        &\le \int_{B_\delta^C(y) \cap B_1(y)} \frac{C}{\abs{x - y}} \, dx
        \le C.
\end{align*}
As for $I_2$,
\begin{align*}
    \abs{I_2}
        \le C \int_{B_1^C} \abs{M(x)} \, dx
        \le C.
\end{align*}
This means that $F_\delta(y)$ is bounded as a function over $\delta < 1$ so we can apply the dominated convergence theorem to conclude that
\begin{align*}
    I
        = \lim_{\delta \to 0} \int F_\delta(y) \, \omega(y) \, dy
        = \lim_{\delta \to 0} \int \int_{B_\delta^C(y)}
                \grad_x K(x - y) \cdot M(x) \, dx
                \, \omega(y) \, dy.
\end{align*}
Since the singularity in $\grad_x K(x - y)$ is removed for each fixed $\delta$, we can apply Fubini's theorem followed by another application of the dominated convergence theorem---using $\omega$ to play the role $M$ played before---to give
\begin{align*}
    I
        &= \int \lim_{\delta \to 0} \int_{B_\delta^C(x)}
                \grad_x K(x - y) \omega(y) \, dy \, M(x) \, dx
        = \pr{\PV \int \grad K(\cdot - y) \omega(y) \, dy, M(\cdot)} \\
        &= \pr{\PV \int \grad K(\cdot - y) \omega(y) \, dy, M(\cdot)},
\end{align*}
since $\grad K$ is symmetric.

This establishes the result for compactly supported vorticity $\omega \in C^\iny$. Now drop the assumption that $\omega$ is compactly supported, and define $\omega_n = a_{1/n} \omega$ and let $u_n = K * \omega_n$. Then $(\omega_n)$ is a sequence of compactly supported vorticities in $C^\iny$ converging to $\omega$ in $L^{1} \cap C^k$ for all $k \ge 0$ with $\omega_n = \omega$ on $B_{1/n}(0)$. From the expression for $\grad u_n(x)$ we just proved, it is easy to see that $\grad u_n$ is Cauchy in many different norms, so that $\grad u_n(x) \to U$, say pointwise, for some $U \in L^\iny$. Moreover,
\begin{align*}
    \norm{u_n - u}_{L^2}
        = \norm{K * \omega_n - K * \omega}_{L^2}
        \le \norm{K}_{L^2(B_{1/n}^C)} \norm{\omega_n - \omega}_{L^1}
        \to 0
\end{align*}
as $n \to \iny$. This shows that $\grad u_n \to \grad u$ in $H^{-1}$. By the uniqueness of limits, it follows that $U = \grad u$ and that the expression for $\grad u(x)$ in the lemma holds.

\end{proof}
} 

%
%
\begin{lemma}\label{L:graduYLikeLemma}
    Let $\omega \in (L^1 \cap C^\iny)(\R^2)$ and let $Y$ be a vector field in $C^\al(\R^2)$.
    Then
    \begin{align*} 
        \PV \int \grad K(x - y) Y(y) \, \omega(y) \, dy
            = - \frac{\omega(x)}{2} \matrix{0 & -1}{1 & 0} Y(x)
                + \brac{K * \dv(\omega Y)}(x),
    \end{align*}
    where $K = \grad^\perp \Cal{F}$ is the Biot-Savart kernel.
    Alternately, if for $j = 1, 2$ we let $i = 2, 1$ then
    \begin{align*}
        \PV \int \grad K^j(x - y) Y(y) \, \omega(y) \, dy
            &= \frac{(-1)^i}{2} \omega_\eps(x) Y_\eps^i(x)
                + \brac{K^j * \dv(\omega_\eps Y_\eps)}(x).
    \end{align*}
\end{lemma}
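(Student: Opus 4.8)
The plan is to establish the component (``alternately'') form, since the matrix form then follows by reading off $j = 1, 2$ into a column vector. Write $V := \omega Y$, which lies in $(L^1 \cap L^\iny)(\R^2)$ and in $C^\al(\R^2)$ (using that the relevant $\omega$ is bounded with bounded $\dot{C}^\al$-seminorm and $Y \in C^\al$); in particular $\dv(\omega Y) \in C^{\al - 1}$, so $K^j * \dv(\omega Y)$ is a well-defined $C^\al$ function by \cref{L:EquivalentConditions}. The left-hand side is well defined as a principal value: the integral converges absolutely away from $y = x$ (as $V \in L^1$ and $\grad K^j$ decays like $\abs{x - y}^{-2}$), while near $y = x$ the vanishing angular average of $\grad K^j$ --- its entries being second derivatives of $\Cal{F}$, each integrating to zero over circles --- gives $\PV \int_{B_1(x)} \grad K^j(x - y) \, dy = 0$, so one may replace $V(y)$ by $V(y) - V(x) = O(\abs{x - y}^\al)$ and tame the singularity.

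The crux is the distributional identity
\begin{align*}
    \prt_k K^j
        = \PV(\prt_k K^j) + c^j_k \, \delta,
    \qquad
    (c^j_k)
        = \frac{1}{2}\matrix{0 & -1}{1 & 0},
\end{align*}
which says that the distributional derivatives $\prt_k K^j$ exceed their pointwise (principal-value) counterparts by a multiple of $\delta$ stemming from $\Delta \Cal{F} = \delta$. I would not recompute these coefficients from scratch: they are exactly the ones already isolated in \cref{L:graduExp}, whose conclusion $\grad u = \frac{\omega}{2}\matrix{0 & -1}{1 & 0} + \PV \int \grad K(x - y) \omega(y) \, dy$ is precisely the statement $\prt_k K^j * \omega = \PV(\prt_k K^j) * \omega + c^j_k \, \omega$ for a scalar $\omega$.

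Granting this, I would move the divergence onto the kernel, writing $K^j * \dv(\omega Y) = \sum_k \prt_k K^j * (\omega Y^k)$ as a distributional convolution, and apply the decomposition to each summand:
\begin{align*}
    K^j * \dv(\omega Y)(x)
        = \PV \int \grad K^j(x - y) \cdot (\omega Y)(y) \, dy
            + \sum_k c^j_k \, (\omega Y^k)(x).
\end{align*}
Since $c^1_2 = -\tfrac12$, $c^2_1 = \tfrac12$, and the remaining entries vanish, the local term equals $-\tfrac12 \omega Y^2$ when $j = 1$ and $+\tfrac12 \omega Y^1$ when $j = 2$, i.e.\ $-\frac{(-1)^i}{2}\omega Y^i$ with $i$ the index conjugate to $j$. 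Rearranging gives the ``alternately'' identity, and collecting the two components into a column vector produces $-\frac{\omega}{2}\matrix{0 & -1}{1 & 0} Y + K * \dv(\omega Y)$, which is the first displayed form.

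The main obstacle is that $Y$ is only $C^\al$, so neither the distributional integration by parts nor the pointwise computation behind \cref{L:graduExp} applies directly to $\omega Y$. I would remove this by mollifying: set $Y_\sigma = \rho_\sigma * Y \in C^\iny$, prove the identity for the smooth field $\omega Y_\sigma$ (where $\dv(\omega Y_\sigma) \in L^1 \cap C^\iny$ and every step above is the classical computation of \cref{L:graduExp}, the $\delta$-term arising from the same boundary integral over $\prt B_\sigma$), and then let $\sigma \to 0$. On the right, $\omega Y_\sigma \to \omega Y$ in $L^\iny$ forces $K * \dv(\omega Y_\sigma) \to K * \dv(\omega Y)$ in $C^\al$ by \cref{L:EquivalentConditions}, while the local term converges pointwise; on the left, the near-origin part is dominated uniformly in $\sigma$ by $C \abs{x - y}^{\al - 2}$ (from the uniform-in-$\sigma$ bound on $\norm{\omega Y_\sigma}_{\dot{C}^\al}$) and the tail by an $L^1$-convergent majorant (since $\omega Y_\sigma \to \omega Y$ in $L^1$), so dominated convergence passes the limit inside.
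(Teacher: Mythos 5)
Your argument is correct, but it reaches the identity by a genuinely different route than the paper. The paper's proof (for smooth $Y$) is a direct integration by parts in $y$ over $B_\delta^C(x)$: the area term produces $K^j * \dv(\omega Y)$, and the boundary term over $\prt B_\delta(x)$ is evaluated explicitly in polar coordinates after freezing $\omega(x)Y(x)$, which is where the local term $\tfrac{(-1)^i}{2}\omega Y^i$ appears; the case $Y \in C^\al$ is then handled by approximating $Y$ by smooth fields. You instead factor the entire boundary computation through \cref{L:graduExp}: writing $K^j * \dv(\omega Y_\sigma) = \prt_k K^j * (\omega Y_\sigma^k)$ and invoking the scalar identity $\prt_k K^j * g = \PV(\prt_k K^j)*g + c^j_k g$ — which is \cref{L:graduExp} read off for the auxiliary field $K*g$ with $g = \omega Y_\sigma^k$ — after which the lemma is pure bookkeeping, and your sign check against the $(-1)^i/2$ convention is right. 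This is more economical, since the delta-function contribution of $\grad^2\Cal{F}$ is computed once rather than re-derived with $Y$ woven into the boundary integral; the only thing to note is that \cref{L:graduExp} is being applied to $u = K*(\omega Y_\sigma^k)$, whose vorticity satisfies the hypotheses even though $u$ itself need not lie in $L^1$ — harmless, since the paper's proof of that lemma only uses the hypothesis on the vorticity. Two small points on your limit $\sigma \to 0$: mollification does not in general converge in the full $C^\al$ norm, so the claim that $K*\dv(\omega Y_\sigma) \to K*\dv(\omega Y)$ \emph{in} $C^\al$ ``by \cref{L:EquivalentConditions}'' overstates what that lemma gives; but uniform convergence, which is all the pointwise identity needs, does follow from $\omega Y_\sigma \to \omega Y$ in $L^\iny$ together with the uniform $C^\al$ bound (via the multiplier estimates in the proof of \cref{L:EquivalentConditions} and interpolation). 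The paper's own approximation step (``$Y_n \to Y$ in $C^\al$'') has the same wrinkle, so neither point is a gap in substance.
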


\OptionalDetails{
\begin{proof}
We assume first that $Y \in C^\iny(\R^2)$.
For $j = 1, 2$ let $i = 2, 1$. Noting that
\begin{align*}
    \brac{\grad K(x - y) Y(y)}^j
        &= (\grad K)^{jk}(x - y) Y^k(y)
        = \prt_k K^j(x - y) Y^k(y)
        = \grad K^j(x - y) \cdot Y(y),    
\end{align*}
integrating by parts gives
\begin{align*}
    &\brac{\PV \int \grad K(x - y) Y(y) \, \omega(y) \, dy}^j
        = \PV \int \grad K^j(x - y) \cdot Y(y) \, \omega(y) \, dy \\
        &\qquad
        = - \PV \int \prt_{y_k} K^j(x - y) Y^k(y) \, \omega(y) \, dy \\
        &\qquad
        = \lim_{\delta \to 0} \int_{B_\delta^C(x)}
             K^j(x - y) \dv (\omega Y)(y) \, dy
        + \lim_{\delta \to 0} \int_{\prt B_\delta(x)}
             K^j(x - y) \omega(y) (Y(y) \cdot \n) \, dy.    
\end{align*}
The area integral converges in the limit to $K^j * \dv(\omega Y)$ since $K$ is locally integrable. For the boundary integral, since $\n = (y - x)/\delta = - (x - y)/\delta$, we can write
\begin{align*}
    K(x - y)
        = \frac{1}{2 \pi \delta^2} (- (x_2 - y_2), x_1 - y_1)
        = \frac{1}{2 \pi \delta} (n_2, -n_1)
\end{align*}
so that $K^j(x - y) = (-1)^i (2 \pi \delta)^{-1} n_i$. Thus, we can express the boundary integral as
\begin{align*}
    I
        &:= (-1)^i \lim_{\delta \to 0} \frac{1}{2 \pi \delta}
            \int_{\prt B_\delta}
             n_i \omega(y) (Y(y) \cdot \n) \, dy.
\end{align*}

Let
\begin{align*}
    II
        &= (-1)^i \lim_{\delta \to 0} \frac{1}{2 \pi \delta}
            \int_{\prt B_\delta}
             n_i \omega(x) (Y(x) \cdot \n) \, dy.
\end{align*}
Writing $Y(x) = Y^1(x) \bm{e}_1 + Y^2(x) \bm{e}_2$, we have
\begin{align*}
    II
        &= (-1)^i \frac{\omega(x)}{2 \pi} \int_0^{2 \pi}
            n_i (Y^1(x) \cos \theta + Y^2(x) \sin \theta) \, d \theta
        = (-1)^i \frac{\omega(x) Y^i(x)}{2} \\
        &= - \frac{\omega(x)}{2} \brac{\matrix{0 & -1}{1 & 0} Y}^j,
\end{align*}
since $n_1 = \cos \theta$ and $n_2 = \sin \theta$. Then,
\begin{align*}
    I - II
        &= (-1)^i \lim_{\delta \to 0} \frac{1}{2 \pi \delta}
            \int_{\prt B_\delta}
             n_j \omega(y) ((Y(y) - Y(x)) \cdot \n \, dy \\
        &\qquad
            + (-1)^i \lim_{\delta \to 0} \frac{1}{2 \pi \delta}
            \int_{\prt B_\delta}
             n_j (\omega(y) - \omega(x)) Y(x) \cdot \n \, dy \\
        &= (-1)^i \lim_{\delta \to 0} \frac{1}{2 \pi \delta^{1 - \al}}
            \int_{\prt B_\delta}
             n_j \omega(y)
             \pr{\frac{Y(y) - Y(x)}{\abs{y - x}^\al}} \cdot \n \, dy \\
        &\qquad
            + (-1)^i \lim_{\delta \to 0} \frac{1}{2 \pi}
            \int_{\prt B_\delta}
             n_j \frac{\omega(y) - \omega(x)}{\abs{y - x}}
                 Y(x) \cdot \n \, dy.
\end{align*}
Therefore,
\begin{align*}
    \abs{I - II}
        \le (2 \pi)^{-1}
            \norm{\omega}_{L^\iny}
            \norm{Y}_{C^\al}
            \lim_{\delta \to 0} \frac{2 \pi \delta}{\delta^{1 - \al}}
                + (2 \pi)^{-1}
                \norm{Y}_{L^\iny}
                \norm{\grad \omega}_{L^\iny}
                \lim_{\delta \to 0}
                (2 \pi \delta)
        = 0
\end{align*}
so that
$
    I
        = II
$.
This leads to
    \begin{align}\label{e:pvKYomegaExp}
        \PV \int \grad K(x - y) Y(y) \, \omega(y) \, dy
            = - \frac{\omega(x)}{2} \matrix{0 & -1}{1 & 0} Y(x)
                + \brac{K * \dv(\omega Y)}(x),
    \end{align}
for $K * \dv(\omega Y)$ when $Y \in C^\iny$.

Now let $(Y_n)$ be a sequence in $C^\iny$ converging to $Y$ in $C^\al$. Fix $p \in [1, \iny)$. The lefthand side of \cref{e:pvKYomegaExp} converges in $L^p$ since it is a singular integral operator applied to $\omega Y_n$, which converges in $L^p$ to $\omega Y$. The first term on the righthand side converges in the same manner. And
\begin{align*}
    Y_n \to Y \text{ in } C^\al
        &\implies \omega Y_n \to \omega Y \text{ in } C^\al
        \implies \dv(\omega Y_n) \to \dv(\omega Y) \text{ in } C^{\al - 1} \\
        &
        \implies K * \dv(\omega Y_n) \to K * \dv(\omega Y) \text{ in } C^\al
\end{align*}
by \cref{L:EquivalentConditions}. Hence, all three terms converge in $L^p_{loc}(\R^2)$, giving \cref{e:pvKYomegaExp} for $Y \in C^\al$.
\end{proof}
} 

The following is a corollary of \cref{L:graduExp,L:graduYLikeLemma}.
\begin{cor}\label{C:graduCor}
    Let $\omega \in (L^1 \cap C^\iny)(\R^2)$ and let $Y$ be a vector field in $C^\al(\R^2)$.
    Then
    \begin{align*}
        Y(x) \cdot \grad u(x)
            &= \PV \int \grad K(x - y) \brac{Y(x) - Y(y)} \omega(y) \, dy
                + \brac{K * \dv(\omega Y)}(x),
    \end{align*}
    where $K = \grad^\perp \Cal{F}$ is the Biot-Savart kernel. Moreover,
    \begin{align*}
        \norm{\PV \int \grad K(x - y) \brac{Y(x) - Y(y)} \omega(y) \, dy}_{C^\al}
            \le C V(\omega) \norm{Y}_{\dot{C}^\al},
    \end{align*}
    $V(\omega)$ being given in \cref{e:Vomega}.
\end{cor}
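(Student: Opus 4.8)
The plan is to prove the two assertions separately: the pointwise identity follows by algebraically combining \cref{L:graduExp,L:graduYLikeLemma}, while the $C^\al$ estimate is a direct application of \cref{L:SerfatiLemma2Inf} to the kernel $L_3$ of \cref{L:SerfatiKernels}. Since the hypothesis $\omega \in (L^1 \cap C^\iny)(\R^2)$ is exactly what makes both source lemmas applicable, no additional approximation is needed at the level of the corollary.

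For the identity I would first recall that, with the conventions of \cref{S:Notation}, $Y \cdot \grad u$ denotes the matrix--vector product $(\grad u)\, Y$, so multiplying the expression for $\grad u(x)$ in \cref{L:graduExp} on the right by $Y(x)$ and pulling $Y(x)$ (which is independent of $y$) inside the principal value gives
\[
    Y(x) \cdot \grad u(x)
        = \frac{\omega(x)}{2} \matrix{0 & -1}{1 & 0} Y(x)
            + \PV \int \grad K(x - y)\, Y(x)\, \omega(y)\, dy.
\]
Next I would write $Y(x) = [Y(x) - Y(y)] + Y(y)$ and split the integral. The two resulting principal values each exist: the first is in fact absolutely convergent near the diagonal, since $\abs{\grad K(x-y)} \le C \abs{x - y}^{-2}$ combines with $\abs{Y(x) - Y(y)} \le \norm{Y}_{\dot{C}^\al} \abs{x - y}^\al$ to give a locally integrable integrand, and the second is evaluated by \cref{L:graduYLikeLemma}. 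Substituting that evaluation, the antisymmetric terms $\pm \tfrac{\omega(x)}{2} \matrix{0 & -1}{1 & 0} Y(x)$ cancel, leaving exactly the claimed identity with remainder $K * \dv(\omega Y)$.

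For the estimate I would read the vector-valued integral componentwise: the $j$-th component of $\PV \int \grad K(x-y)[Y(x)-Y(y)]\omega(y)\,dy$ equals $\sum_{k} \PV \int \prt_k K^j(x-y)\, \omega(y)\, [Y^k(x) - Y^k(y)]\, dy$, a finite sum of integral transforms of the type treated in \cref{L:SerfatiLemma2} applied to the scalar functions $Y^k$, with scalar kernels $L_3^{jk}(x,y) := \prt_k K^j(x-y)\, \omega(y)$. Each $L_3^{jk}$ is a component of the matrix kernel $L_3$ of \cref{L:SerfatiKernels}, which supplies $\norm{L_3}_{**} \le C\, V(\omega)$ and verifies condition \cref{e:LPVinL1} through the principal value appearing in $V(\omega)$ (see \cref{e:Vomega}). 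Invoking \cref{L:SerfatiLemma2Inf} component by component (as flagged in the remark preceding \cref{L:SerfatiKernels}) and bounding $\norm{Y^k}_{\dot{C}^\al} \le \norm{Y}_{\dot{C}^\al}$, the finitely many scalar bounds sum to $C\, V(\omega)\, \norm{Y}_{\dot{C}^\al}$.

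There is no deep obstacle here, as the substantive work lives in the cited lemmas; the main thing to get right is the matrix bookkeeping, namely keeping straight that $\grad K = (\prt_k K^j)_{jk}$ acts on the vector $Y(x) - Y(y)$ and that \cref{L:SerfatiLemma2Inf} is applied scalar-component by scalar-component rather than to a matrix kernel as a whole. I would also take care to record that the two principal-value splittings in the identity are each individually meaningful, so that the cancellation of the antisymmetric parts is legitimate rather than merely formal.
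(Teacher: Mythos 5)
Your proposal is correct and takes essentially the same route as the paper: the identity comes from right-multiplying the formula of \cref{L:graduExp} by $Y(x)$, inserting $Y(x) = [Y(x)-Y(y)]+Y(y)$, and cancelling the antisymmetric terms against \cref{L:graduYLikeLemma}, while the bound is \cref{L:SerfatiLemma2Inf} applied componentwise to the kernel $L_3$ of \cref{L:SerfatiKernels}. The paper's own proof is simply a two-sentence compression of exactly this argument, so your version just supplies the bookkeeping it leaves implicit.
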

\begin{proof}
The expression for $Y(x) \cdot \grad u(x)$ follows from comparing the expressions in \cref{L:graduExp,L:graduYLikeLemma}. The $C^\al$-bound follows from applying \cref{L:SerfatiLemma2Inf} with the kernel $L_3$ of \cref{L:SerfatiKernels}.
\end{proof}

\begin{lemma}\label{L:ConvEq}
    Let $f \in C^\beta(\R^2)$ for $\beta > 0$ be such that $\grad (a_r K) \starp f(x)$
    is defined for some $r > 0$, $x \in \R^2$. Then
    \begin{align*}
        \grad (a_r K) \starp f(x)
            = \lim_{h \to 0} \grad (\mu_{rh} K) \starp f(x).
    \end{align*}
\end{lemma}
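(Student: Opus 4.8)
The plan is to peel off the part of $a_r K$ that $\mu_{rh}$ deletes near the origin and to show it contributes nothing in the limit. First I would fix $r$ and take $h$ small enough that $2h < r$; then $a_r \equiv 1$ on $\supp a_h$, so $\mu_{rh} = a_r(1 - a_h) = a_r - a_h$ and hence $\grad(\mu_{rh} K) = \grad(a_r K) - \grad(a_h K)$ pointwise away from the origin. Since $\mu_{rh} K$ vanishes on $B_h(0)$, the transform $\grad(\mu_{rh} K) \starp f(x)$ is a genuine, absolutely convergent integral; truncating the two principal values on the right at any radius $\delta < h$ and letting $\delta \to 0$ (legitimate, as each truncated integral is finite and the truncation never sees $B_h(0)$) yields the exact identity
\begin{align*}
    \grad(\mu_{rh} K) \starp f(x)
        = \grad(a_r K) \starp f(x) - \grad(a_h K) \starp f(x).
\end{align*}
It therefore suffices to prove that the residual $\grad(a_h K) \starp f(x)$ tends to $0$ as $h \to 0$.

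After the substitution $z = x - y$ I would split this residual against the value $f(x)$,
\begin{align*}
    \grad(a_h K) \starp f(x)
        = \PV \int \grad(a_h K)(z)\,[f(x - z) - f(x)]\,dz
            + f(x)\,\PV \int \grad(a_h K)(z)\,dz.
\end{align*}
The first term is routine: $a_h K$ is supported in $B_{2h}(0)$, where $\abs{\grad(a_h K)(z)} \le C \abs{z}^{-2}$ (both $a_h \grad K$ and the differentiated-cutoff piece $K \otimes \grad a_h$ obey this bound, the latter because $\abs{\grad a_h} \le C \abs{z}^{-1}$ on $h \le \abs{z} \le 2h$, cf.\ \cref{R:Radial}), so the $\dot{C}^\beta$ regularity of $f$ gives
\begin{align*}
    \abs{\PV \int \grad(a_h K)(z)\,[f(x - z) - f(x)]\,dz}
        \le C \norm{f}_{\dot{C}^\beta} \int_{\abs{z} \le 2h} \abs{z}^{\beta - 2}\,dz
        = C \beta^{-1} \norm{f}_{\dot{C}^\beta} (2h)^\beta,
\end{align*}
which vanishes as $h \to 0$.

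The hard part will be the second, constant-in-$f$ term $f(x)\,\PV \int \grad(a_h K)(z)\,dz$. Since $a_h$ is radial and each entry of $\grad K$ has zero mean over circles centred at the origin, the genuinely singular part $\PV \int a_h \grad K\,dz$ vanishes, leaving only the annular commutator $\int K \otimes \grad a_h\,dz$ produced by differentiating the cutoff. I expect controlling this commutator to be the crux: a direct radial computation shows it is a fixed (antisymmetric) matrix, and the identity can only close if this contribution is reconciled away. The mechanism I would invoke is that $\mu_{rh}(0) = 0$ removes the Dirac contribution attached to $\curl K = -\dv K^\perp$ (equivalently $\Delta \Cal{F} = \delta$), exactly the device already used for $\tr B$ in \cref{S:Refinedgradueps}; matching the principal value against the smooth truncation $1 - a_h$ through this cancellation is where essentially all the care is required, the reduction and the Hölder estimate above being routine.
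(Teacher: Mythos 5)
Your reduction to showing $\lim_{h\to0}\grad(a_hK)\starp f(x)=0$ and your \Holder estimate for the part of the integrand paired against $f(x-z)-f(x)$ coincide with the paper's proof. The gap is that you stop exactly where the proof has to be finished: the term $f(x)\,\PV\int\grad(a_hK)(z)\,dz$. Your own observations determine it completely. Since $\PV\int a_h\grad K\,dz=0$ (radial cutoff against a kernel with zero circular means), what remains is $f(x)\int\grad a_h\otimes K\,dz$, and the radial computation you allude to gives, for every $h$,
\begin{align*}
    \PV\int_{\R^2}\grad(a_hK)(z)\,dz
        = \int_{\R^2}\grad a_h\otimes K(z)\,dz
        = \frac12\matrix{0 & 1}{-1 & 0},
\end{align*}
as one sees from the divergence theorem: $\lim_{\delta\to0}\int_{\abs{z}>\delta}\prt_i(a_hK^j)\,dz=-\lim_{\delta\to0}\int_{\abs{z}=\delta}K^j(z)\,z_i/\abs{z}\,d\sigma=-\tfrac{1}{2\pi}\int_{S^1}(\theta^\perp)^j\theta_i\,d\theta$. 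So under the principal-value reading of $\starp$ this contribution does not tend to zero; it tends to a fixed antisymmetric matrix times $f(x)$. The cancellation mechanism you invoke --- that $\mu_{rh}(0)=0$ removes the Dirac mass in $\curl K$ --- lives in the \emph{distributional} derivative, $\grad(a_hK)=\PV\grad(a_hK)+\tfrac12\matrix{0 & -1}{1 & 0}\delta_0+\grad a_h\otimes K$ restricted suitably, whose total mass is indeed $0$; it is not available in the principal value, which is how $\starp$ is defined. ``I expect this to be reconciled away'' is not a proof, and carrying your own computation to the end shows the opposite under the stated conventions.

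For comparison, the paper's proof makes the same reduction and then disposes of this term by asserting $\int_{B_{2h}}\grad\brac{(a_r-\mu_{rh})K}(x-y)f(x)\,dy=0$, i.e.\ it uses the distributional mean $\langle\grad(a_hK),1\rangle=0$ where the principal-value mean is the antisymmetric matrix above. You have therefore put your finger on precisely the delicate point, but a complete argument must resolve it explicitly rather than defer it: either the identity acquires the correction $\tfrac12\matrix{0 & 1}{-1 & 0}f(x)$ on the right-hand side, or $\grad(a_rK)\starp f$ must be reinterpreted so as to include the Dirac contribution. (Either resolution is harmless where the lemma is applied, since the discrepancy is bounded by $C\norm{\omega_0}_{L^\iny}$ and a term of that size already appears in the bound on $\tr B$; but a proof cannot end with the term simply vanishing.)
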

\OptionalDetails{ 
\vspace{-1.25em}
\begin{proof}
    Referring to \cref{R:Radial}, 
    for all sufficiently small $h > 0$, $\mu_{rh} = a_r$ on $B_{2h}^C$.
    Hence, we need only show that
    \begin{align}\label{e:ConvEqMustShow}
        \lim_{h \to 0} \int_{B_{2h}}
            \grad \brac{(a_r - \mu_{rh}) K}(x - y) f(y) \, dy
                = 0.
    \end{align}
    Now, on $B_{2h}$,
    \begin{align*}
        \grad \brac{(a_r - \mu_{rh}) K}
            = (a_r - \mu_{rh}) \grad K + (\grad a_r - \grad \mu_{rh}) \otimes K
            = (1 - \mu_{rh}) \grad K - \grad \mu_{rh} \otimes K
    \end{align*}
    so
    \begin{align*}
        \abs{\grad \brac{(a_r - \mu_{rh}) K}(z)}
            \le \abs{\grad K(z)} + \frac{C}{h} \abs{K(z)}
            \le \frac{1}{2 \pi \abs{z}^2} + \frac{C}{h \abs{z}}
            \le \frac{C}{\abs{z}^2}.
    \end{align*}
    Also,
    \begin{align*}
        \int_{B_{2h}}
            \grad \brac{(a_r - \mu_{rh}) K}(x - y) f(x) \, dy
                = 0.
    \end{align*}
    
    Thus,
    \begin{align*}
        &\abs{\int_{B_{2h}}
                \grad \brac{(a_r - \mu_{rh}) K}(x - y) f(y) \, dy} \\
            &\qquad\qquad
            = \abs{\int_{B_{2h}} 
                \grad \brac{(a_r - \mu_{rh}) K}(x - y) (f(y) - f(x)) \, dy
                } \\
            &\qquad\qquad
            \le \int_0^{2h} \frac{C}{\rho^2} \norm{f}_{C^\beta} \rho^\beta \rho \, d \rho
            \le C h^\beta
    \end{align*}
    so that \cref{e:ConvEqMustShow} holds.
\end{proof}
} 

%
%
\section{On transport equation estimates}\label{A:TransportEstimates}

\noindent Together, \cref{L:f0,L:ForY0} justify our use of strong transport equations in obtaining estimates in the $C^\al$-norm of the transported and pushed-forward quantities. First, the initial data is mollified using a mollification parameter $\delta$ independent of $\eps$, the strong transport equation estimates are made, then $\delta$ is taken to zero. This is all while $\eps$ is held fixed. \cref{L:f0} is used to obtain the $C^\al$-bound on $\dv Y_\eps(t)$ (leading to \cref{e:YdivBound}), while \cref{L:ForY0} is used to obtain the $C^\al$-bounds on the vector fields, $Y_\eps(t)$, $R_\eps$(t), and $Y_\eps \cdot \grad u_\eps(t)$.

The proofs of \cref{L:f0,L:ForY0}, which are left to the reader, employ only \cref{e:CdotIneq}$_{1, 2}$, the boundedness of $\grad \eta_\eps^{-1}(t)$ in $L^\iny$ over time (for fixed $\eps$), and the convergence in $C^\al$ of a mollified function to the function itself. 

\noindent \begin{lemma}\label{L:f0}
    For $f_0 \in C^\al$, let
    \begin{align*}
        f(t, x)
            &:= f_0(\eta_\eps^{-1}(t, x)), \\
        f^{(\delta)}(t, x)
            &= (\rho_\delta * f_0)(\eta_\eps^{-1}(t, x))
    \end{align*}
    for $\delta > 0$.
    Then
    \begin{align*}
        \smallnorm{f^{(\delta)} - f}_{L^\iny([0, T]; C^\al)}
            \to 0 \text{ as } \delta \to 0.
    \end{align*}
    \Ignore{ 
    Similarly, for $f_0 \in C^{\al - 1}$
    \begin{align*}
        \smallnorm{f^{(\delta)} - f}_{L^\iny([0, T]; C^{\al - 1}}
            \to 0 \text{ as } \delta \to 0.
    \end{align*}
    } 
\end{lemma}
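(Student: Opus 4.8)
The plan is to collapse the difference into a single composition with the inverse flow map and then feed it into the two ingredients flagged just before the statement. The first step is the observation that mollification acts only on the fixed initial datum, so
\[
    f^{(\delta)}(t, x) - f(t, x)
        = g_\delta\pr{\eta_\eps^{-1}(t, x)},
        \qquad
        g_\delta := \rho_\delta * f_0 - f_0 .
\]
All of the $t$- and $x$-dependence beyond the single perturbation $g_\delta$ is now carried by the diffeomorphism $\eta_\eps^{-1}(t, \cdot)$, so the problem reduces to controlling how composition with this map distorts the $C^\al$ norm of $g_\delta$, uniformly in $t \in [0, T]$.

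I would then estimate the two pieces of the $C^\al$ norm separately. Because $\eta_\eps^{-1}(t, \cdot)$ is a diffeomorphism of $\R^2$ onto itself, the sup norm is preserved, $\smallnorm{g_\delta \circ \eta_\eps^{-1}(t)}_{L^\iny} = \norm{g_\delta}_{L^\iny}$, with no $t$-dependence. For the homogeneous seminorm, \cref{e:CdotIneq}$_1$ gives
\[
    \smallnorm{g_\delta \circ \eta_\eps^{-1}(t)}_{\dot{C}^\al}
        \le \norm{g_\delta}_{\dot{C}^\al}
            \smallnorm{\grad \eta_\eps^{-1}(t)}_{L^\iny}^\al .
\]
Setting $M_\eps := \sup_{t \in [0, T]} \smallnorm{\grad \eta_\eps^{-1}(t)}_{L^\iny}$, which is finite for fixed $\eps$ by the uniform $L^\iny$ bound on $\grad \eta_\eps^{-1}$ recorded before the statement, these combine into
\[
    \sup_{t \in [0, T]} \norm{f^{(\delta)}(t) - f(t)}_{C^\al}
        \le \norm{g_\delta}_{L^\iny}
            + \max(1, M_\eps)^\al \, \norm{g_\delta}_{\dot{C}^\al} .
\]

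The conclusion is then immediate once $\norm{g_\delta}_{L^\iny} \to 0$ and $\norm{g_\delta}_{\dot{C}^\al} \to 0$ as $\delta \to 0$, i.e. once $\rho_\delta * f_0 \to f_0$ in $C^\al$; this is the one ingredient with genuine content and the step I expect to be the crux. The role of $M_\eps < \iny$ is exactly to decouple the fixed perturbation $g_\delta$ from the flow, so that a single mollification estimate bounds the whole family $\smallset{f^{(\delta)}(t)}$ over $[0, T]$ at once. I would flag one subtlety here: full-$C^\al$ convergence of $\rho_\delta * f_0$ holds precisely when $f_0$ lies in the closure of $C^\iny$ in $C^\al$, whereas for general $f_0 \in C^\al$ one has only $\rho_\delta * f_0 \to f_0$ in $C^{\al'}$ for each $\al' < \al$ together with $\sup_\delta \norm{\rho_\delta * f_0}_{\dot{C}^\al} < \iny$; this weaker statement, combined with the lower semicontinuity of $\norm{\cdot}_{C^\al}$ under $C^{\al'}$-convergence, is what the subsequent applications actually need. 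The companion \cref{L:ForY0} would be handled by the same template, using in addition \cref{e:CdotIneq}$_2$ to absorb the products arising in $Y_0 \cdot \grad \eta_\eps$ and its pushforward.
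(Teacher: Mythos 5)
Your argument is essentially the paper's: write the difference as $(\rho_\delta * f_0 - f_0)\circ \eta_\eps^{-1}(t)$, note the sup norm passes through the composition unchanged, and use \cref{e:CdotIneq}$_1$ to bound the seminorm by $\norm{\rho_\delta * f_0 - f_0}_{\dot{C}^\al}\,\smallnorm{\grad \eta_\eps^{-1}(t)}_{L^\iny}^\al$, with the gradient factor finite on $[0,T]$ for fixed $\eps$. The subtlety you flag at the end is genuine and is \emph{not} addressed in the paper's own proof, which simply asserts $\norm{\rho_\delta * f_0 - f_0}_{C^\al}\to 0$: for $f_0\in C^\al$ outside the closure of $C^\iny$ in $C^\al$ (the little H\"older class) this convergence fails, so the lemma as literally stated requires either the weaker conclusion you describe (convergence in $C^{\al'}$ for $\al'<\al$ together with $\delta$-uniform $C^\al$ bounds, which is what the applications in \cref{A:TransportEstimates} actually use) or an added hypothesis on $f_0$.
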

\OptionalDetails{
\vspace{-1.25em}
\begin{proof}
    We have,
    \begin{align*}
        \smallnorm{(f^{(\delta)} - f)(t)}_{\dot{C}^\al}
            &= \norm{(\rho_\delta * f_0 - f_0) \circ \eta_\eps^{-1}(t)}_{\dot{C}^\al}
            \le \norm{\rho_\delta * f_0 - f_0}_{\dot{C}^\al}
                \norm{\eta_\eps^{-1}(t)}_{L^\iny}^\al \\
            &\le \norm{\rho_\delta * f_0 - f_0}_{\dot{C}^\al}
                e^{\al \int_0^t V_\eps(s) \, ds}.
    \end{align*}
    And, more simply,
    \begin{align*}
        \smallnorm{(f^{(\delta)} - f)(t)}_{L^\iny}
            &= \norm{(\rho_\delta * f_0 - f_0) \circ \eta_\eps^{-1}(t)}_{L^\iny}
            = \norm{\rho_\delta * f_0 - f_0}_{L^\iny}    
    \end{align*}
    so
    \begin{align*}
        \smallnorm{(f^{(\delta)} - f)(t)}_{C^\al}
            &\le \norm{\rho_\delta * f_0 - f_0}_{C^\al}
                e^{\al \int_0^t V_\eps(s) \, ds},
    \end{align*}
    from which $\smallnorm{f^{(\delta)} - f}_{L^\iny([0, T]; C^\al)} \to 0$ follows.
    (Note that we do not have a uniform bound on $V_\eps$ in $\eps$,
    but all we need is that it is finite for all $\eps > 0$.)
    \Ignore{ 
    We need to work a little harder for the $C^{\al - 1}$ convergence.
    Since $f_0 \in C^{\al - 1}$, $f_0 = g_0 + \dv h_0$ for some
    $g_0, h_0 \in C^\al$. Then
    \begin{align*}
        \rho_\delta * f_0
            = \rho_\delta * g_0 + \dv(\rho_\delta * h_0)
    \end{align*}
    so
    \begin{align*}
        &\smallnorm{(f^{(\delta)} - f)(t)}_{C^{\al - 1}}
            = \norm{(\rho_\delta * f_0 - f_0) \circ \eta_\eps^{-1}(t)}_{C^{\al - 1}} \\
            &\qquad
            \le \norm{(\rho_\delta * g_0 - g_0) \circ \eta_\eps^{-1}(t)}_{C^{\al - 1}}
                + \norm{(\dv(\rho_\delta * h_0) - \dv h_0)
                    \circ \eta_\eps^{-1}(t)}_{C^{\al - 1}} \\
            &\qquad
            \le \norm{(\rho_\delta * g_0 - g_0) \circ \eta_\eps^{-1}(t)}_{C^\al}
                + \norm{(\dv(\rho_\delta * h_0) - \dv h_0)
                    \circ \eta_\eps^{-1}(t)}_{C^{\al - 1}}
    \end{align*}
    } 
\end{proof}
} 

\begin{lemma}\label{L:ForY0}
    Let $Y_\eps$ be as in \cref{e:Yeps}, so that
    \begin{align*}
        Y_\eps(t, \eta_\eps(t, x))
            = Y_0(x) \cdot \grad \eta_\eps(t, x).
    \end{align*}    
    Define $Y_\eps^{(\delta)}$ by
    \begin{align*}
        Y_\eps^{(\delta)}(t, \eta_\eps(t, x))
            = (\rho_\delta * Y_0)(x) \cdot \grad \eta_\eps(t, x).
    \end{align*}
    Then
    \begin{align*}
        \smallnorm{Y_\eps^{(\delta)} - Y_\eps}_{L^\iny([0, T]; C^\al)}
            \to 0 \text{ as } \delta \to 0.
    \end{align*}
\end{lemma}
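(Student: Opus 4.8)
The plan is to reduce the claim to the convergence of the mollified initial field in $C^\al$, with every flow-map factor estimated at \emph{fixed} $\eps$ on the interval $[0, T]$. First I would record the difference explicitly. Subtracting the two defining identities for $Y_\eps$ and $Y_\eps^{(\delta)}$ at the point $\eta_\eps(t, x)$ gives
\begin{align*}
    (Y_\eps^{(\delta)} - Y_\eps)(t, \eta_\eps(t, x))
        = \big[(\rho_\delta * Y_0) - Y_0\big](x) \cdot \grad \eta_\eps(t, x),
\end{align*}
and, since $\eta_\eps(t, \cdot)$ is a diffeomorphism, replacing $x$ by $\eta_\eps^{-1}(t, x)$ yields
\begin{align*}
    (Y_\eps^{(\delta)} - Y_\eps)(t, x)
        = P_\delta(t, \eta_\eps^{-1}(t, x)), \quad
    P_\delta(t, \cdot) := \big[(\rho_\delta * Y_0) - Y_0\big] \cdot \grad \eta_\eps(t, \cdot).
\end{align*}
Thus the problem is reduced to bounding the $C^\al$ norm of the product $P_\delta$ precomposed with the inverse flow map.

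Next I would estimate the two pieces of the $C^\al$ norm separately. Composition with the bijection $\eta_\eps^{-1}(t, \cdot)$ leaves the supremum unchanged, so $\norm{(Y_\eps^{(\delta)} - Y_\eps)(t)}_{L^\iny} = \norm{P_\delta(t)}_{L^\iny}$, while \cref{e:CdotIneq}$_1$ controls the seminorm by
\begin{align*}
    \norm{(Y_\eps^{(\delta)} - Y_\eps)(t)}_{\dot{C}^\al}
        \le \norm{P_\delta(t)}_{\dot{C}^\al}
            \norm{\grad \eta_\eps^{-1}(t)}_{L^\iny}^\al.
\end{align*}
Applying \cref{e:CdotIneq}$_2$ componentwise to the product defining $P_\delta$ then gives $\norm{P_\delta(t)}_{C^\al} \le C \norm{(\rho_\delta * Y_0) - Y_0}_{C^\al} \norm{\grad \eta_\eps(t)}_{C^\al}$, so that altogether
\begin{align*}
    \norm{(Y_\eps^{(\delta)} - Y_\eps)(t)}_{C^\al}
        \le C \big(1 + \norm{\grad \eta_\eps^{-1}(t)}_{L^\iny}^\al\big)
            \norm{\grad \eta_\eps(t)}_{C^\al}
            \norm{(\rho_\delta * Y_0) - Y_0}_{C^\al}.
\end{align*}

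It then remains to absorb the flow-map factors. Here it is essential that $\eps$ is held fixed: for fixed $\eps$ the velocity $u_\eps$ is $C^\iny$, so on $[0, T]$ both $\norm{\grad \eta_\eps^{-1}(t)}_{L^\iny}$ and $\norm{\grad \eta_\eps(t)}_{C^\al}$ are finite and bounded uniformly in $t$ (the latter because $\grad \eta_\eps$ solves the linear variational equation driven by the smooth field $\grad u_\eps$, yielding a spatially Lipschitz, hence $C^\al$, bound on $[0, T]$). This produces a constant $C(\eps, T)$ for which $\smallnorm{Y_\eps^{(\delta)} - Y_\eps}_{L^\iny([0, T]; C^\al)} \le C(\eps, T) \norm{(\rho_\delta * Y_0) - Y_0}_{C^\al}$, and the claim follows on letting $\delta \to 0$.

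The main obstacle is the single genuinely analytic input invoked at the end: the convergence $\norm{(\rho_\delta * Y_0) - Y_0}_{C^\al} \to 0$ in the \emph{full} $C^\al$ norm, with $Y_0 \in C^\al$. Everything preceding it is algebraic bookkeeping with the product and composition inequalities, and the structural point that legitimizes that bookkeeping is that the entire argument is run at fixed $\eps$, where the flow-map constants are harmless even though they are not uniform in $\eps$. This is exactly why the mollification parameter $\delta$ is sent to zero before $\eps$, as described at the start of \cref{A:TransportEstimates}.
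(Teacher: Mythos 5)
Your argument is essentially the paper's own: both write the difference as $(\rho_\delta * Y_0 - Y_0)$ multiplied by $\grad\eta_\eps$ and composed with $\eta_\eps^{-1}$, control the $C^\al$ norm with the product and composition inequalities of \cref{e:CdotIneq} using that the flow-map factors are finite at fixed $\eps$ on $[0,T]$, and reduce everything to the convergence $\norm{\rho_\delta * Y_0 - Y_0}_{C^\al} \to 0$ (the same analytic input the paper explicitly lists as the only nontrivial ingredient). The bookkeeping differs only cosmetically --- the paper routes both factors through $\eta_\eps^{-1}$, picking up $\norm{\grad\eta_\eps^{-1}(t)}_{\dot C^\al}\norm{\grad\eta_\eps^{-1}(t)}_{L^\iny}^{2\al}$ instead of your $\norm{\grad\eta_\eps(t)}_{C^\al}\bigl(1+\norm{\grad\eta_\eps^{-1}(t)}_{L^\iny}^{\al}\bigr)$ --- so your proposal is correct and matches the paper's proof.
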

\OptionalDetails{
\vspace{-1.25em}
\begin{proof}
    We have
    \begin{align*}
        Y_\eps^{(\delta)}(t, x)
            = (\rho_\delta * Y_0)(\eta_\eps^{-1}(t, x))
                \cdot \grad \eta_\eps(t, \eta_\eps^{-1}(t, x))
    \end{align*}
    Hence,
    \begin{align*}
        \smallnorm{(Y_\eps^{(\delta)} - Y_\eps)(t)}_{\dot{C}^\al}
            &\le \norm{(\rho_\delta * Y_0 - Y_0)}_{\dot{C}^\al}
                \norm{\grad \eta_\eps^{-1}(t)}_{\dot{C}^\al}
                \norm{\grad \eta_\eps^{-1}(t)}_{L^\iny}^{2\al}
    \end{align*}
    and
    \begin{align*}
        \smallnorm{(Y_\eps^{(\delta)} - Y_\eps)(t)}_{L^\iny}
            \le \norm{(\rho_\delta * Y_0 - Y_0)}_{L^\iny}
                \norm{\grad \eta_\eps^{-1}(t)}_{L^\iny}
    \end{align*}
    so that
    \begin{align*}
        \smallnorm{(Y_\eps^{(\delta)} - Y_\eps)(t)}_{C^\al}
            &\le \norm{(\rho_\delta * Y_0 - Y_0)}_{C^\al}
                \norm{\grad \eta_\eps^{-1}(t)}_{\dot{C}^\al}
                \norm{\grad \eta_\eps^{-1}(t)}_{L^\iny}^{2 \al}.
    \end{align*}
    This vanishes as $\delta \to 0$ uniformly in time,
    since $\grad \eta_\eps^{-1}$ is
    smooth in time and space, though we do not have a concrete bound
    on its rate.
\end{proof}
} 

\section*{Acknowledgements}
\noindent H.B. is supported by the 2014 Research Fund(Project Number 1.140076.01) of UNIST(Ulsan National Institute of Science and Technology). H.B. gratefully acknowledges the support by the Department of Mathematics at UC Davis where part of this research was performed. H.B. would like to thank the Department of Mathematics at UC Riverside for its kind hospitality where part of this work was completed.

J.P.K. gratefully acknowledges NSF grants DMS-1212141 and DMS-1009545, and thanks Instituto Nacional de Matem\'{a}tica Pura e Aplicada in Rio de Janeiro, Brazil, at which a portion of this research was performed.

\ifbool{ForSubmission}{}
{
	\boolfalse{InlineBib}
}

\ifbool{InlineBib}
{
\def\cprime{$'$} \def\polhk#1{\setbox0=\hbox{#1}{\ooalign{\hidewidth
  \lower1.5ex\hbox{`}\hidewidth\crcr\unhbox0}}}

} 
{
\bibliography{Refs}
\bibliographystyle{plain}
} 

\end{document}